\newtheorem{theorem}{Theorem}[section]
\newtheorem{remark}{Remark}[section]
\newtheorem{definition}{Definition}[section]
\newtheorem{lemma}[theorem]{Lemma}
\newcommand{\n}{\rho}
\newcommand{\mr}{\mathbb{R}}
\newcommand{\lm}{\lambda}
\renewcommand{\div}{ {\rm div }  }
\newcommand{\na}{\nabla }
\newcommand{\pa}{\partial}
\newcommand{\bt}{\begin{theorem}}
\newcommand{\bl}{\begin{lemma}}
\newcommand{\el}{\end{lemma}}
\newcommand{\et}{\end{theorem}}
\newcommand{\ga}{\gamma}
\newcommand{\de}{\delta}
\newcommand{\la}{\label}
\newcommand{\si}{\sigma}
\newcommand{\ol}{\overline}
\newcommand{\bn}{\begin{eqnarray}}
\newcommand{\en}{\end{eqnarray}}
\newcommand{\bnn}{\begin{eqnarray*}}
\newcommand{\enn}{\end{eqnarray*}}
\newcommand{\bnnn}{\begin{eqnarray*}}
\newcommand{\ennn}{\end{eqnarray*}}
\newcommand{\ben}{\begin{enumerate}}
\newcommand{\een}{\end{enumerate}}
\newcommand{\rs}{\rho^*}
\newcommand{\pb}{\ol{P}}
\newcommand{\T}{\mathbb{T}}
\newcommand{\du}{\dot{u}}
\newcommand{\ba}{\begin{aligned}}
\newcommand{\ea}{\end{aligned}}
\newcommand{\be}{\begin{equation}}
\newcommand{\ee}{\end{equation}}
\def\p{\partial}
\def\norm[#1]#2{\|#2\|_{#1}}
\def\lam{\lambda}
\def\o{\omega}
\title{Global Existence and Incompressible Limit for
Compressible Navier-Stokes Equations with Large Bulk Viscosity Coefficient 
and Large Initial Data}
\date{}
\author{$\text{Qinghao L{\small EI}}^{a,b}, \text{Chengfeng X{\small{IONG}}}^{a,b}\thanks{Email addresses:  leiqinghao22@mails.ucas.ac.cn (Q. H. Lei), xiongchengfeng20@mails.ucas.ac.cn (C. F. Xiong) }$\\
    a. School of Mathematical Sciences,\\ University of Chinese Academy of Sciences,
     Beijing 100190, P. R. China;\\
    b. Institute of Applied Mathematics,\\ Academy of Mathematics and Systems Science (AMSS),\\
     Chinese Academy of Sciences, Beijing 100190, P.R. China.}
\begin{document}
\maketitle

\begin{abstract}
For periodic initial data with the density allowing vacuum, 
we establish the global existence and exponential decay 
of weak, strong and classical solutions
to the two-dimensional(2D) compressible Navier-Stokes equations
when the bulk viscosity coefficient is sufficiently large, 
without any extra restrictions on initial velocity divergence.
Moreover, we demonstrate that when the bulk viscosity coefficient tends to infinity,
these solutions converge to solutions to 
 the inhomogeneous incompressible Navier-Stokes equations.
For the incompressible limit of weak solutions, 
 our results hold
even without requiring the initial velocity field to be divergence-free.
Our results are established by introducing time-layers 
to avoid 
imposing restrictions on the initial velocity divergence, along with 
estimates of $L^\infty$-norm of the effective viscous flux $G$ via a  time-partitioning
approach based on Gagliardo-Nirenberg's inequality.
\\
\par\textbf{Keywords:} Compressible Navier-Stokes equations; 
Global existence; Incompressible limit; Large initial data; Vacuum
\end{abstract}

\section{Introduction and main results}
We study the two-dimensional barotropic compressible
Navier-Stokes equations which read as follows:
\be\ba\la{ns}
\begin{cases}
  \rho_t + \div(\rho u) = 0,\\
  (\n u)_t + \div(\n u\otimes u) -\mu \Delta u - (\mu + \lm)\na\div u
    +\na P = 0,
\end{cases}
\ea\ee
where $\n=\n(x,t)$ and $u(x,t)=(u^1(x,t),u^2(x,t))$ represent the 
density and velocity of the compressible flow respectively, and the 
pressure $P$ is given by
\be\la{i1}
  P=a\n^\ga,
\ee
with contants $a>0,\ga > 1$. Without loss of generality, it is assumed that
$a=1$. The shear viscosity coefficient $\mu$ 
and the bulk viscosity coefficient $\lam$ satisfy
the physical restrictions:
\be\la{i2}
\mu>0,\quad \mu+\lam\geq 0.
\ee
For later purpose, we set 
\be\la{nu}
\nu := 2\mu + \lam,
\ee
which together with (\ref{i2}) yields that 
\be\la{numu}\nu\geq\mu.\ee
We consider the Cauchy problem supplemented 
with the initial data $\n_0$ and $u_0$, which are periodic with period 1
in each space direction $x_i,\ i=1,2$, i.e. functions defined
on $\mathbb{T}^2 = \mr^2/\mathbb{Z}^2$. We also required that
\be\la{i3}
\n(x,0)=\n_0(x),\quad \n u(x,0)=\n_0u_0(x), \quad x\in \mathbb{T}^2.
\ee
It is obvious that the total mass and momentum of smooth enough solutions
of (\ref{ns}) are conserved through the evolution, that is, for all
$t>0$,
\be\ba\la{conv}
\int_{\mathbb{T}^2} \n dx = \int_{\mathbb{T}^2}\n_0 dx,
\quad \int_{\mathbb{T}^2} \n u dx = \int_{\mathbb{T}^2} \n_0 u_0 dx.
\ea\ee
Without loss of generality, we shall assume that 
\be\ba\la{asm}
\int_{\T^2} \n_0 dx = 1, \quad
\int_{\mathbb{T}^2} \n_0 u_0 dx = 0.
\ea\ee

There is a vast amount of literature on the global existence and 
large time behavior of solutions to (\ref{ns}). The one-dimensional problem has 
been extensively investigated by many researchers (see \cite{H4,KS,S1,S2} 
and references therein).
 As for the multi-dimensional case, the local existence
and uniqueness of classical solutions were established in \cite{N,S} 
in the absence of vacuum. For strong solutions, 
results were obtained in \cite{CCK,CK,CK2,SS,LLL}(and references therein),  
where the initial density does not necessarily have to be positive 
and can vanish on open sets.
The first result concerning global classical solutions was established by
Matsumura-Nishida\cite{MN1} where initial data are required to be close
to a non-vacuum equilibrium in some Sobolev space $H^s$.
Therefore, the classical solution represents a small perturbation from a 
non-vacuum state, ensuring that the density is strictly away from vacuum 
and the gradient of density remains bounded uniformly in time. 
Thereafter, Hoff \cite{H1,H2,H3} studied the problem for discontinuous
initial data. 
For the existence of weak solution,
the major pioneering breakthrough was made by Lions \cite{L2} 
 (see also Feireisl-Novotn\'y-Petzeltov\'a \cite{FNP} and Feireisl \cite{F}),
where he established the global existence of finite-energy weak solutions
with the pressure $P = a\n^\ga(a>0,\ga>1)$ , 
provided that $\ga $ is suitably large.
The main restriction is the initial data with finite energy 
and the presence of vacuum in the initial density.
The large time behavior of the finite-energy weak solution was obtained
in \cite{NS,FP} and more recently, Peng-Shi-Wu \cite{PSW} obtained the 
exponential decay of finite-energy weak solutions.
It is noteworthy that Huang-Li-Xin\cite{HLX2} established the global
existence and uniqueness of classical solutions to the Cauchy problem
in three-dimensional space,
 under the assumption of smooth initial data with
suitably small energy but potentially large oscillations. 
In particular,
the initial density may contain vacuum, even has compact support.
Subsequently, Li-Xin\cite{LX2} extended this result to the Cauchy 
problem in $\mathbb{R}^2$ while Huang-Li \cite{HL} generalized this result
to the three-dimensional Full Navier-Stokes equations.
More recently, Danchin-Mucha \cite{DM} demonstrated the existence of 
global weak solutions with uniformly bounded density in time
under the assumption that the bulk viscosity is sufficiently large and that  $\nu^{1/2} \|\div u_0\|_{L^2}$ has scale restrictions.
Additionally, for the isothermal case $\ga = 1$, the uniqueness was 
also established.

The aim of this paper is to generalize the global existence result
in \cite{DM} to remove the scale restrictions on $\nu^{1/2} \| \div u_0\|_{L^2}$.
It is worth noting that by removing this restriction, 
when obtaining the singular limit of the weak solutions
from the  compressible Navier-Stokes equations 
to the incompressible one,
we can assume that the initial velocity field is not divergence-free.
Motivated by the blow up criteria 
established in \cite{HLX1}, we are also able to extend the global existence result to the strong and classical solutions.
Moreover, similar to \cite{CL,Ge,LX,HL2}, 
the uniqueness and large time behavior with exponential decay will be  
established.

Before stating the main results, we first explain the notations
and conventions used throughout this paper. We denote
\be\ba\la{i4}
  \int f dx = \int_{\mathbb{T}^2} fdx,\quad
   \ol{f}=\frac{1}{|\mathbb{T}^2|}\int fdx.
\ea\ee
For $1\leq r\leq\infty$, we also denote the standard Lebesgue and Sobolev
spaces as follows:
\be\la{i5}
L^r =L^r(\mathbb{T}^2),\quad W^{s,r} =W^{s,r}(\mathbb{T}^2),
\quad H^s =W^{s,2}.
\ee
The initial total energy is defined as follows:
\be\la{e0}\ba
E_0 := \int \frac{1}{2} \rho_0 |u_0|^2 + \frac{1}{\ga-1}\n^\ga_0 dx.
\ea\ee
Moreover, we further define
\be\la{xd}\ba
\o :=\pa_1 u^2 - \pa_2 u^1.
\ea\ee

Then we provide the definition of weak and strong solutions to (\ref{ns}).
\begin{definition}
If $(\n,u)$ satisfies \eqref{ns} in the sense of distribution, then we call $(\n,u)$ a weak solution.
Moreover, for a weak solution if
all derivatives involved in \eqref{ns} are regular distributions
and equations \eqref{ns} hold almost everywhere in 
$\mathbb{T}^2\times(0,T)$, then $(\n,u)$ is called a strong solution.
\end{definition}
The first main result concerning the global existence and exponential decay of weak  solutions can be described as follows:
\begin{theorem}\la{th0}
Assume the initial data $(\n_0,u_0)$ satisfy
\be \la{wsol1}\ba
0\le \n_0 \in L^\infty,\quad u_0 \in H^1.
\ea\ee
Then, there exists a positive constant $\nu_1$ depending only on 
$\ga,\ \mu,\ E_0,\ \|\n_0\|_{L^1\cap L^\infty}$ and $\| \na u_0\|_{L^2}$, such that when $\nu \ge \nu_1 $, 
the problem \eqref{ns}--\eqref{i3} has at least one weak solution $(\n,u) \in \T^2 \times (0,\infty)$ satisfying,
for any $0<T<\infty$ and $1 \le p < \infty $
\be\la{wsol2}\ba
0\le \n(x,t) \leq 2 \tilde{\n},
\quad \mathrm{for\ any\ }(x,t)\in \T^2 \times[0,\infty),
\ea\ee
\be\la{wsol3}\ba
\begin{cases}
\rho\in L^{\infty}(\T^2 \times (0,\infty)) \cap C([0,\infty);L^p), \\ 
u\in L^\infty(0,T;H^1), t^{1/2}u_t \in L^2(0,T;L^2), t^{1/2} \na u \in L^\infty(0,T;L^p),
\end{cases}
\ea\ee
where $\tilde{\n}=\|\n_0\|_{L^\infty} + \left( (\ga-1)E_0 \right)^{1/\ga}$.

Moreover, when $\nu \ge \nu_1$, for any $s\in [1,\infty)$, 
there exist positive constants $C$ and $K_0$, where $C$ depends only on
$s,\ \mu,\ E_0,\ \ga,\ \| \rho_0 \|_{L^1\cap L^\infty}$, 
while $K_0$ depends only on $\mu,\ \ga, \ \| \rho_0 \|_{L^1\cap L^\infty}$,
such that for  $\alpha_0 =\frac{K_0}{\nu}$, 
it holds that for any $t\ge 1$,
\be\la{wsol4}\ba
\| \n-\ol{\n_0} \|^{s}_{L^s} \le C e^{-2 \alpha_0 t},\quad
\| \o\|^2_{L^2} +\nu \| \div u\|^2_{L^2}  \le C e^{-\alpha_0 t},\quad  \| \sqrt{\n} \dot u \|^2_{L^2} \le C e^{-\alpha_0 t}.
\ea\ee  
\end{theorem}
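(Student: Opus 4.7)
The strategy is to construct smooth approximate solutions by regularizing the initial density (for instance, $\n_0^\de=\n_0+\de$ with a mollified velocity $u_0^\de$) and solving a family of non-vacuum compressible problems on $\T^2$ by the method of Matsumura--Nishida, then deriving a priori estimates uniform in $\de$ and passing to the limit $\de\to0$ via the Lions--Feireisl compactness framework. Since the density will be shown uniformly bounded, the oscillation-defect-measure machinery is avoided; Aubin--Lions together with strong convergence of $\n^\de$ suffices. I will therefore concentrate on the a priori estimates for smooth solutions of \eqref{ns}--\eqref{i3}.

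The first basic ingredient is the classical energy identity obtained by testing the momentum equation with $u$, which bounds $\sqrt{\n}u$ in $L^\infty(0,T;L^2)$, $\n^\ga$ in $L^\infty(0,T;L^1)$, and $\na u$ in $L^2(\T^2\times(0,T))$ in terms of $E_0$ alone. To avoid imposing a smallness condition on $\|\div u_0\|_{L^2}$, I would then test the momentum equation against $t\du$ rather than $\du$; the time weight absorbs the ``bad'' initial contributions $\|\na u_0\|_{L^2}^2$ and $\|\div u_0\|_{L^2}^2$, producing an estimate of the form
$$t\Big(\mu\|\na u\|_{L^2}^2+\nu\|\div u\|_{L^2}^2\Big)+\int_0^t s\,\|\sqrt{\n}\du\|_{L^2}^2\,ds\le C,$$
which is the time-layer device alluded to in the introduction. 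Standard differentiation of the momentum equation with respect to $t$ then yields, again with a $t^2$-weight, a similar bound on $\int s^2\|\na\du\|_{L^2}^2\,ds$.

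The heart of the argument is a uniform $L^\infty$ bound on the effective viscous flux $G:=\nu\div u-(P-\ol P)$. Rewriting the momentum equation as $\n\du=\na G+\mu\na^\perp\o+\na\ol P$ and applying $L^q$-elliptic theory gives $\|\na G\|_{L^q}+\|\na\o\|_{L^q}\le C\|\n\du\|_{L^q}$, while $\|G\|_{L^2}+\|\o\|_{L^2}\le C\|\sqrt{\n}\du\|_{L^2}$. I would then partition $[0,T]$ into subintervals on which the weighted estimate above yields $\int_{t_k}^{t_{k+1}}\|\sqrt{\n}\du\|_{L^2}^2\,ds\le\eta$ with $\eta$ small, and apply Gagliardo--Nirenberg in the form $\|G\|_{L^\infty}\le C\|G\|_{L^2}^\theta\|\na G\|_{L^q}^{1-\theta}$ ($q>2$) on each subinterval to produce a time-integrable $L^\infty$ bound on $G$. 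Since the continuity equation combined with $\nu\div u=G+(P-\ol P)$ reads, along a particle trajectory $X(\cdot)$,
$$\n(x,t)=\n_0(X(0))\exp\!\Big(-\tfrac1\nu\!\int_0^t\!(G+P-\ol P)\,ds\Big),$$
a continuation argument then closes the bound $\n\le2\ti\n$ once $\nu\ge\nu_1$ is taken sufficiently large, and the remaining regularity in \eqref{wsol3} follows from elliptic estimates for the Lam\'e system.

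For the exponential decay \eqref{wsol4}, since $\int(\n-\ol{\n_0})\,dx=0$ and $\n$ is uniformly bounded, Poincar\'e together with the relation $\nu\div u=G+(P-\ol P)$ converts the $L^2$ estimate for $\n-\ol{\n_0}$ into a differential inequality of the form $\frac{d}{dt}\mathcal{E}(t)+\frac{K_0}{\nu}\mathcal{E}(t)\le0$ for a Lyapunov functional $\mathcal{E}$ equivalent to the left side of \eqref{wsol4}; this yields the rate $\alpha_0=K_0/\nu$, and the $L^s$ decay of $\n-\ol{\n_0}$ follows by interpolation against the uniform $L^\infty$ bound. The main obstacle will be the $L^\infty$-bound on $G$: the time-layer estimate degenerates as $t\to0^+$, so a uniform bound on $[0,1]$ cannot be obtained directly, and the time-partitioning must be coupled with a careful choice of exponents $q,\theta$ in Gagliardo--Nirenberg so that the resulting $L^\infty$ integral is small in $\nu^{-1}$---this is precisely where the largeness of $\nu$ must be invoked to close the bootstrap.
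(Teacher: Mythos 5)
Your plan captures the paper's essential architecture — regularize the data, derive $\nu$-uniform a priori estimates for smooth solutions, use the time-layer weight to avoid any smallness of $\|\div u_0\|_{L^2}$, control the density through an $L^1$-in-time $L^\infty$-in-space bound on $G$ via Gagliardo--Nirenberg, close with a continuation argument, and obtain decay from a Lyapunov functional — and is in those respects correct. Two points of genuine divergence are worth flagging. First, for the density bound you invoke the particle-trajectory representation $\n(X(t),t)=\n_0(X(0))\exp(-\tfrac1\nu\int_0^t(G+P-\ol P)\,ds)$ plus a continuation argument; the paper instead rewrites the continuity equation as $D_t\n = g(\n)+h'(t)$ with $g(\n)=-\n(\n^\ga-\ol P)/\nu$ and applies Zlotnik's inequality (Lemma~\ref{zli}). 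These are morally the same, but Zlotnik's lemma handles the implicit $\n$-dependence of the $P-\ol P$ term cleanly (through the sign of $g$ for $\n\ge(\ol P)^{1/\ga}$), whereas the exponential formula alone does not yield an \emph{a priori} bound because $P$ appears on both sides; you would need to re-derive essentially that comparison argument by hand.

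Second, and more substantively, your strategy for $\int_0^T\|G\|_{L^\infty}\,dt$ — partitioning $[0,T]$ into subintervals on which $\int\|\sqrt{\n}\du\|_{L^2}^2$ is at most a small $\eta$ and applying Gagliardo--Nirenberg on each — does not as stated control the large-time tail. Since $\|G\|_{L^2}$ is only $O(\nu^{1/2})$ in $L^\infty(1,\infty)$ from the weighted bound on $A_1$, the integral $\frac1\nu\int_1^T\|G\|_{L^\infty}\,dt$ could grow in $T$ without a decay mechanism, and the smallness of the local dissipation $\eta$ does not by itself make the subinterval contributions summable. The paper's two-regime split is essential here: on $[0,\min\{1,T\}]$ one exploits $\si^2\|\sqrt{\n}\du\|_{L^2}^2,\ \si^2\|\na\du\|_{L^2}^2$ and the integrability of $\si^{-5/8}$; on $[\min\{1,T\},T]$ one must first establish the exponential decay of $\|\sqrt{\n}\du\|_{L^2}$, $\|\na\du\|_{L^2}$, and $\|G\|_{L^2}$ (Lemma~\ref{g3}) so that $\int_1^T\|G\|_{L^\infty}\,dt\le C\nu^{5/6}$ uniformly in $T$. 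You do derive the decay in your last paragraph, but you present it as a consequence after the density bound has been closed; in the paper the decay estimates are proved (under the bootstrap assumption $\n\le 2\tilde\n$) \emph{before} and \emph{used in} the density bound. Without this ordering the oscillation in $h(t)=-\frac1\nu\int_0^t\n G\,ds$ cannot be made uniformly small for all $T$, and the continuation argument would not close.
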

\begin{theorem}\la{th01}
Fix the initial data $(\n_0,u_0)$ in $L^\infty\times H^1$ satisfying $\n_0\geq 0$.
Assume that $\nu_1$ is the one determined in Theorem \ref{th0}.
For $\nu \ge \nu_1$, 
we denote  $(\n^{\nu},u^{\nu})$ as the global weak solution to \eqref{ns}--\eqref{i3} established in Theorem \ref{th0}.
Then, as $\nu$ tends to $\infty $, the solution sequence $(\n^{\nu},u^{\nu})$ 
admits a subsequence that converges to the global solution of the following 
inhomogeneous incompressible Navier-Stokes equations:
\be\la{isol2}\ba
\begin{cases}
\n_t+\div(\n u)=0,\\
(\n u)_t+\div(\n u\otimes u) -\mu \Delta u + \na \pi =0, \\
\div u=0,
\end{cases}
\ea\ee
with initial data $\n(\cdot,0)=\n_0,\ \n u(\cdot,0)=m_0:=\n_0 u_0$,
and $(\n,u)$ satisfies for any $0<T<\infty$ and $1\le p <\infty$,
\be\la{isol3}\ba
\begin{cases}
\rho\in L^{\infty}(\T^2 \times (0,\infty)) \cap C([0,\infty);L^p), \\ 
u\in L^2(0,T;H^1), \quad \na u \in L^2(\T^2 \times (0,\infty)), \\
\sqrt{t} \na^2 u,\ t \sqrt{\n} u_t,\ t^2 \na u_t \in L^2(0,T;L^2), \\
\sqrt{t} \na u,\ t \na^2 u,\ t^2 \sqrt{\n} u_t \in L^\infty(0,T;L^2), \\
\sqrt{t} \pi \in L^2(0,T;H^1), \quad t \pi \in L^\infty(0,T;H^1),
\end{cases} 
\ea\ee
and we also have for any $0 < \tau <\infty$,
\be\la{isol4}\ba
\div u^{\nu} = O(\nu^{-1/2}) \  in \  L^2(\T^2 \times (0,\infty)) \cap L^\infty(\tau,\infty;L^2).
\ea\ee
If the initial data $(\n_0,u_0)$ further satisfy
\be\la{ws}\ba
0\le \n_0 \in L^\infty,\quad u_0 \in H^1,\quad \div u_0=0,
\ea\ee
then the entire sequence $(\n^{\nu},u^{\nu})$ converges to the unique global solution of \eqref{isol2},
and $(\n,u)$ satisfies for any $0<T<\infty$ and $1\le p <\infty$,
\be\la{lws1}\ba
\begin{cases}
\rho\in L^{\infty}(\T^2 \times (0,\infty)) \cap C([0,\infty);L^p),
\quad \sqrt{\n} u \in C([0,\infty);L^2), \\ 
u\in L^\infty(0,T;H^1), \quad \na u \in L^\infty(0,\infty;L^2), \\
\na u,\ \sqrt{\n} u_t,\ \na^2 u,\ \na \pi \in L^2(\T^2 \times (0,\infty)), \\ 
\sqrt{t} \na \pi,\ \sqrt{t} \na^2 u \in L^\infty(0,\infty;L^2) \cap L^2(0,T; L^p), \\
\sqrt{\n} u_t \in L^\infty(0,T;L^2) \cap L^2(0,T; L^p), \quad 
\sqrt{t} \na u_t \in L^2(\T^2 \times (0,T)),
\end{cases} 
\ea\ee
and
\be\la{lws2}\ba
\div u^{\nu} = O(\nu^{-1/2}) \  in \  L^2(\T^2 \times (0,\infty)) \cap L^\infty(0,\infty;L^2).
\ea\ee
If the initial data $(\n_0,u_0)$ additionally satisfy
\be\la{insc1}\ba
0\le \n_0 \in H^2,\quad u_0 \in H^2 ,\quad \div u_0=0,
\ea\ee 
and the following compatibility condition:
\be\la{insc2}\ba
- \mu \Delta u_0 + \nabla \pi_0 = \sqrt{\n_0}g_1, 
\ea\ee
for some $(\pi_0,g_1)\in H^1 \times L^2$,
then the limit of $(\n^\nu,u^\nu)$ is the unique global strong solution of \eqref{isol2} and satisfies
\be\la{insc3}\ba
\begin{cases}
\n \in C( [0,\infty );H^{2} ), \quad  
u \in C( [0,\infty );H^{2} ) \cap L^2(0,T;H^3), \\ 
\pi \in C( [0,\infty );H^1 ) \cap L^2(0,T;H^2), \\
u_t \in L^2(0,T;H^1), \quad
(\n_t, \sqrt{\n} u_t) \in L^\infty(0,T;L^2),
\end{cases} 
\ea\ee
for any $0<T<\infty$.
\end{theorem}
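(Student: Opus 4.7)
The strategy is to use the uniform-in-$\nu$ estimates of Theorem \ref{th0} to extract convergent subsequences, identify the limit as a solution of \eqref{isol2}, and then upgrade the regularity under successively stronger assumptions on $(\n_0,u_0)$. For every $\nu\ge\nu_1$, \eqref{wsol2}--\eqref{wsol4} yield, uniformly in $\nu$, that $\n^\nu\in L^\infty(\T^2\times(0,\infty))$, $u^\nu\in L^\infty(0,T;H^1)$, $t^{1/2}u_t^\nu\in L^2(0,T;L^2)$ and, most importantly, $\nu^{1/2}\div u^\nu\in L^2(\T^2\times(0,\infty))\cap L^\infty(\tau,\infty;L^2)$ for every $\tau>0$. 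The last bound is precisely \eqref{isol4} and forces $\div u=0$ for any weak-$*$ limit $u$.

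From the continuity equation, $\pa_t\n^\nu$ is bounded in $L^2(0,T;H^{-1})$ uniformly in $\nu$, so Aubin--Lions gives $\n^\nu\to\n$ strongly in $C([0,T];L^p)$ for every $p<\infty$. The momentum equation provides a uniform negative-Sobolev bound on $\pa_t(\n^\nu u^\nu)$, which together with $\n^\nu u^\nu\in L^\infty(0,T;L^2)$ yields strong convergence of $\n^\nu u^\nu$ in $C([0,T];H^{-s})$ for some $s>0$; combined with $u^\nu\rightharpoonup u$ in $L^2(0,T;H^1)$, this produces strong convergence of $\sqrt{\n^\nu}u^\nu$ in $L^2(0,T;L^2)$, which is enough to pass to the limit in $\n^\nu u^\nu\otimes u^\nu$. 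The pressure is handled through the effective viscous flux
\[
G^\nu:=\nu\div u^\nu-\bigl(P^\nu-\ol{P^\nu}\bigr),
\]
which recasts the momentum equation as $\n^\nu\dot u^\nu+\mu\na^\perp\o^\nu-\na G^\nu=0$; testing against divergence-free fields kills the gradients and the distributional limit of $\na G^\nu+\na\ol{P^\nu}$ is identified with $\na\pi$ for some $\pi$. Hence $(\n,u,\pi)$ solves \eqref{isol2}, and the regularity \eqref{isol3} follows by weak lower semicontinuity from the time-weighted estimates of Theorem \ref{th0} together with the Stokes estimate $\|\na^2u\|_{L^2}+\|\na\pi\|_{L^2}\le C(\|\sqrt{\n}u_t\|_{L^2}+\|\n u\cdot\na u\|_{L^2})$ for the limit system.

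Under the extra hypothesis $\div u_0=0$ of \eqref{ws}, the initial layer disappears: revisiting the proof of Theorem \ref{th0} one obtains $\na u^\nu\in L^\infty(0,T;L^2)$ and $\sqrt{\n^\nu}u_t^\nu\in L^2(0,T;L^2)$ uniformly in $\nu$, without the weight $\sqrt{t}$. Passing to the limit yields \eqref{lws1}--\eqref{lws2}, and the known uniqueness for the 2D inhomogeneous incompressible Navier-Stokes equations with bounded density (cf.\ \cite{DM}) upgrades subsequential convergence to convergence of the whole sequence. When $(\n_0,u_0)$ additionally satisfies \eqref{insc1}--\eqref{insc2}, I would differentiate \eqref{ns} in time and test with $u_t^\nu$; the compatibility condition is precisely what controls the initial value of $\sqrt{\n^\nu}u_t^\nu$ in $L^2$ uniformly in $\nu$, and a standard bootstrap involving $\|\na\n^\nu\|_{L^2}$ and $\|\na^2u^\nu\|_{L^2}$ propagates $H^2$ regularity. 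Passing to the limit then gives \eqref{insc3}.

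The main obstacle is the identification of the limiting pressure. One has $\div u^\nu\to 0$ strongly, but $\nu\div u^\nu$ need not be bounded in $L^2$; nevertheless its gradient is controlled in $H^{-1}$ through the momentum equation and, combined with the strong convergence of $P^\nu=(\n^\nu)^\ga$ coming from $\n^\nu\to\n$ in $C([0,T];L^p)$, this allows the gradient limit $\na\pi$ to be unambiguously identified. A secondary technical point is the initial-time layer in the first part of the theorem, where the estimates carry the weight $\sqrt{t}$; this is exactly what the hypotheses $\div u_0=0$ and the compatibility condition \eqref{insc2} are designed to eliminate in the subsequent parts.
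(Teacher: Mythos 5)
Your overall outline mirrors the paper's strategy for the first two parts (extract weak limits from the $\nu$-uniform estimates, use the effective viscous flux to recover $\pi$, deduce $\div u=0$ from the $O(\nu^{-1/2})$ bound, and invoke uniqueness of the limit system to upgrade to whole-sequence convergence). Two remarks on route differences are in order before the real issue.

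On the pressure: the paper does not need a de Rham argument. Since $\ol{G^\nu}=0$ and $\|\na G^\nu\|_{L^2}\le C\|\n^\nu\dot u^\nu\|_{L^2}$, the estimates \eqref{0x10} and \eqref{pd302} directly give $\{G^\nu\}_\nu$ bounded in $L^2(\tau,\infty;H^1)$ uniformly in $\nu$, so one simply extracts $G^\nu\rightharpoonup-\pi$ weakly in that space and passes to the limit in the rewritten momentum equation $(\n^\nu u^\nu)_t+\div(\n^\nu u^\nu\otimes u^\nu)-\na G^\nu-\mu\na^\bot\o^\nu=0$. Your concern that ``$\nu\div u^\nu$ need not be bounded in $L^2$'' is correct (the $\|P^\nu-\ol{P^\nu}\|_{L^2}\sim e^{-K_0 t/\nu}$ decay is too slow when integrated over $(\tau,\infty)$), but it is precisely the combination $G^\nu=\nu\div u^\nu-(P^\nu-\ol{P^\nu})$, not $\nu\div u^\nu$, whose weak limit is needed, and that quantity is uniformly bounded; your de-Rham-plus-$H^{-1}$ route works but is not necessary. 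Also note a citation slip: uniqueness for \eqref{isol2} in the class \eqref{lws1} is from Danchin--Mucha \cite{DM2} (the incompressible-in-vacuum paper), not \cite{DM}.

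The genuine gap is in the third part, the $H^2$ case \eqref{insc1}--\eqref{insc3}. You propose to differentiate the compressible system \eqref{ns} in time, test with $u_t^\nu$, and claim that ``the compatibility condition is precisely what controls the initial value of $\sqrt{\n^\nu}u_t^\nu$ in $L^2$ uniformly in $\nu$.'' This does not work: the compatibility condition \eqref{insc2} involves the \emph{incompressible} pressure $\pi_0$, whereas the compressible system requires \eqref{csol2} with the barotropic pressure $P(\n_0)=\n_0^\ga$. With $\div u_0=0$ the two conditions differ by $\n_0^{-1/2}\bigl(\na P(\n_0)-\na\pi_0\bigr)$, and this quantity need not lie in $L^2$ when $\n_0$ vanishes, since nothing forces $\na\pi_0$ to vanish at vacuum points. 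Hence \eqref{insc2} does not yield \eqref{csol2} for the approximating compressible data, the initial value of $\sqrt{\n^\nu}\dot u^\nu$ is not controlled uniformly, and no uniform-in-$\nu$ $H^2$ estimate for $(\n^\nu,u^\nu)$ is available by your bootstrap (indeed, all of the paper's higher-order Lemmas \ref{c21}--\ref{c26} carry constants depending on $\nu$). The paper sidesteps this entirely: having already shown that the limit satisfies \eqref{lws1}, it invokes the incompressible well-posedness result \cite[Corollary 1.4]{HW} to produce a unique global strong solution of \eqref{isol2} with regularity \eqref{insc3}, and then uses the uniqueness in the class \eqref{lws1} from \cite{DM2} to identify the previously constructed limit with this strong solution. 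The extra regularity \eqref{insc3} is thus inherited from the incompressible theory, not propagated through the compressible approximation.
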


\begin{remark}\la{ctoi1}
In contrast to \cite{DM}, we can establish the convergence of solutions for the compressible Navier-Stokes equations to those of the incompressible Navier-Stokes equations without requiring the assumption ${\rm div} u_0 \equiv 0$.
Moreover, it is shown in \cite[Theorem 2.1]{L1} that there exist global weak solutions for (\ref{isol2}) when $ \div u_0 \ne 0$.
\end{remark}

\begin{remark}\la{ctoi2}
For the solution $(\n,u)$ of \eqref{isol2} satisfying \eqref{isol3} with the initial data $(\n_0,m_0) = (\n_0,\n_0u_0)$,
meaning that for all $1\leq p <\infty$, $\n\in C([0,\infty);L^p)$ and 
with $\n(\cdot,0)=\n_0$ and moreover,
for all $T\in (0,\infty)$, 
$(-\Delta)^{-1/2}\na^\bot\cdot \n u \in C([0,T];L^2_w)$ with 
$(-\Delta)^{-1/2}\na^\bot\cdot \n u(\cdot,0) = (-\Delta)^{-1/2}\na^\bot\cdot m_0$.
The reason why we cannot  obtain the time-continuitiy of $\n u $ is that 
$ \pi $ is only in $L^2(\tau,\infty;H^1)$ rather than
$L^2(0,\infty;H^1)$. We refer readers to \cite[Theorem 2.2]{L1} for more discussions about the time-continuity of 
$\n u$.
\end{remark}

\begin{theorem}\la{th1}
Suppose that the initial data $(\n_0,u_0)$ satisfy for some $q>2$,
\be \la{ssol1}\ba
0\le \n_0 \in W^{1,q},\quad u_0 \in H^1.
\ea\ee 
Then, for the same $\nu_1$ in Theorem \ref{th0}, when $\nu \ge \nu_1 $, 
the problem \eqref{ns}--\eqref{i3} has a unique strong solution $(\n,u)$ in $\T^2 \times (0,\infty)$ 
satisfying \eqref{wsol2}
and 
\be\la{ssol4}\ba
\begin{cases}
\rho\in C([0,T];W^{1,q} ), \quad \n_t\in L^\infty(0,T;L^2), \\ 
u\in L^\infty(0,T; H^1) \cap L^{(q+1)/q}(0,T; W^{2,q}), \\ 
t^{1/2}u \in L^2(0,T; W^{2,q}) \cap L^\infty(0,T;H^2), \\
t^{1/2}u_t \in L^2(0,T;H^1), \\
\n u\in C([0,T];L^2), \quad \sqrt{\n} u_t\in L^2(\T^2 \times(0,T)),
\end{cases} 
\ea\ee
for any $0<T<\infty$.
Moreover, the strong solution $(\n,u)$ satisfies \eqref{wsol4}.
\end{theorem}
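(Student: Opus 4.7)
The plan is to upgrade the weak solution produced in Theorem \ref{th0} to a strong one by a standard approximation-plus-continuation argument, relying on the a priori estimates already secured in Theorem \ref{th0} and on the Beale--Kato--Majda type blow-up criterion of \cite{HLX1}. First I would smooth the initial data: mollify $\rho_0$ to obtain $\rho_0^\eta\in C^\infty$ with $\rho_0^\eta\ge \eta>0$ and $\rho_0^\eta\to\rho_0$ in $W^{1,q}$, and regularise $u_0$ similarly in $H^1$. Classical local existence theory (e.g.\ Cho--Choe--Kim, Nash, Serrin) produces a unique local strong solution $(\rho^\eta,u^\eta)$ on a maximal interval $[0,T^*_\eta)$, with density strictly away from vacuum initially, and in particular with the regularity listed in \eqref{ssol4}.

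Next I would run the continuation argument. Because $\nu\ge\nu_1$, the global a priori estimates behind Theorem \ref{th0} apply on $[0,T^*_\eta)$ uniformly in $\eta$: one gets $0\le\rho^\eta\le 2\tilde\rho$, $\nabla u^\eta\in L^\infty_t L^2_x$, $t^{1/2}u^\eta_t\in L^2_tL^2_x$, the exponential decay \eqref{wsol4} of $\|\omega\|_{L^2}$, $\nu\|\div u\|_{L^2}^2$ and $\|\sqrt\rho\dot u\|_{L^2}^2$, together with the $L^\infty$ bound on the effective viscous flux $G=\nu\div u-P$ obtained via the time-partitioning/Gagliardo--Nirenberg technique advertised in the introduction. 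These bounds let me estimate $\|\nabla u\|_{L^\infty_t L^p}$ for any $p<\infty$ from the standard decomposition $\nabla u = \nabla v+\nabla w$, where $v$ inverts $-\mu\Delta v = \nabla^\bot \omega$ and $w$ inverts $-\nu\Delta w=\nabla(G+P)$, combined with elliptic estimates and the pressure bound. Differentiating the continuity equation then gives an ODE
\begin{equation*}
\frac{d}{dt}\|\nabla\rho^\eta\|_{L^q} \le C\bigl(1+\|\nabla u^\eta\|_{L^\infty}\bigr)\|\nabla\rho^\eta\|_{L^q}+C\|\nabla^2 u^\eta\|_{L^q},
\end{equation*}
and Gr\"onwall, together with a logarithmic Beale--Kato--Majda bound on $\|\nabla u^\eta\|_{L^\infty}$ expressed through $\|\div u\|_{L^\infty}+\|\omega\|_{L^\infty}$, $\|\sqrt\rho\dot u\|_{L^2}$ and $\|\nabla\rho^\eta\|_{L^q}$, prevents blow-up of $\|\nabla\rho^\eta\|_{L^q}$ on any finite interval. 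The blow-up criterion of \cite{HLX1} then forces $T^*_\eta=\infty$. Higher-order estimates for $t^{1/2}\|u^\eta\|_{H^2}$ and $t^{1/2}\|u^\eta_t\|_{H^1}$ come from testing the time-differentiated momentum equation by $u_t$ and from elliptic regularity applied to the Lam\'e system for $u$, with time weights introduced to handle the fact that initially we only have $u_0\in H^1$.

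Passing $\eta\to 0$, the uniform estimates allow me to extract a limit $(\rho,u)$ which is a strong solution of \eqref{ns}--\eqref{i3} with the regularity \eqref{ssol4} and inherits \eqref{wsol2} and the exponential decay \eqref{wsol4}. Uniqueness is proved by the standard method: given two strong solutions $(\rho_1,u_1)$, $(\rho_2,u_2)$ with the same data, set $\bar\rho=\rho_1-\rho_2$, $\bar u = u_1-u_2$, and derive a Gr\"onwall inequality of the form
\begin{equation*}
\frac{d}{dt}\bigl(\|\sqrt{\rho_1}\bar u\|_{L^2}^2+\|\bar\rho\|_{L^2}^2\bigr)+\mu\|\nabla\bar u\|_{L^2}^2 \le C(t)\bigl(\|\sqrt{\rho_1}\bar u\|_{L^2}^2+\|\bar\rho\|_{L^2}^2\bigr),
\end{equation*}
where $C(t)$ is integrable on $[0,T]$ thanks to the regularity \eqref{ssol4}, giving $\bar\rho\equiv 0$, $\bar u\equiv 0$.

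The main obstacle will be the propagation of $\|\nabla\rho\|_{L^q}$ jointly with the required $L^{(q+1)/q}_t W^{2,q}_x$ estimate on $u$: since vacuum is allowed and $u_0$ is only in $H^1$, one cannot bound $\nabla^2 u$ uniformly near $t=0$, so the time weights and the precise logarithmic BKM bound built from the $L^\infty$ estimate of $G$ must be calibrated carefully to close the argument without deteriorating the threshold $\nu_1$ inherited from Theorem \ref{th0}.
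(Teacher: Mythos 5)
Your proposal follows essentially the same route as the paper: mollify the data, invoke classical local existence with nonnegative density, use the paper's uniform a priori bounds (in particular the $L^1_tL^\infty_x$ control on the effective viscous flux $G$) combined with a logarithmic Gr\"onwall/BKM argument to propagate $\|\nabla\rho\|_{L^q}$ and so continue the approximate solutions globally, derive time-weighted higher-order bounds, pass to the limit, and prove uniqueness by a Gr\"onwall estimate for the difference. The paper packages the continuation step by citing its Theorem \ref{th2} and then feeds the approximate solutions into the a priori Lemmas \ref{s21}--\ref{s23}, but the substance (including the log-BKM estimate, the time weights to compensate for $u_0\in H^1$ only, and the density Gr\"onwall inequality you write out) is identical to yours; the only cosmetic difference is your explicit positive lower bound $\rho_0^\eta\ge\eta$ for the mollified density, which the paper leaves implicit.
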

Next, if the initial data $(\n_0,u_0)$ satisfy higher regularity and compatibility conditions, 
we can obtain the global existence and exponential decay of classical solutions.
\begin{theorem}\la{th2}
Assume that the initial data $(\n_0,u_0)$ satisfy for some $q>2$,
\be\la{csol1}\ba
0\le \n_0 \in W^{2,q},\quad u_0 \in H^2,
\ea\ee 
and the following compatibility condition:
\be \la{csol2} \ba
- \mu \Delta u_0 - (\mu + \lm )\nabla \div u_0 +  \nabla P(\n_0)=\n_0^{1/2}g_2,
\ea\ee
for some $g_2 \in L^2.$
Then, for the value $\nu_1$ determined in Theorem $\ref{th0}$, when $\nu \ge \nu_1 $, 
the problem $(\ref{ns})-(\ref{i3})$ has a unique classical solution
 $(\n,u)$ in $\T^2 \times (0,\infty)$ satisfying \eqref{wsol2}
and 
\be\la{csol4}\ba
\begin{cases}
(\rho,P(\n))\in C([0,T];W^{2,q} ),\quad  (\n_t,P_t)\in L^\infty(0,T;H^1), \\ 
(\n_{tt},P_{tt})\in L^2(0,T;L^2), \\
u\in L^\infty(0,T; H^2) \cap L^2(0,T;H^3), \quad u_t \in L^2(0,T; H^1) \\ 
\na u_t, \na^3u \in L^{(q+1)/q}(0,T;L^q), \\
t^{1/2}\na^3u \in L^\infty(0,T;L^2)\cap L^2(0,T; L^q), \\
t^{1/2}u_t\in L^\infty(0,T;H^1) \cap L^2(0,T;H^2), \\ 
t^{1/2}\na^2(\n u)\in L^\infty(0,T;L^q),\quad \n^{1/2}u_t\in L^{\infty}(0,T;L^2), \\
t\n^{1/2}u_{tt},\quad t\na^2 u_t \in L^{\infty}(0,T;L^2), \\
t\na^3 u \in L^{\infty}(0,T;L^q),\quad t\na u_{tt} \in L^2(0,T;L^2),
\end{cases} 
\ea\ee
for any $0<T<\infty$.
Furthermore, the classical solution $(\n,u)$ satisfies \eqref{wsol4}.
\end{theorem}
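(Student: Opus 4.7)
The plan is to bootstrap from the unique global strong solution already produced by Theorem~\ref{th1}, upgrading its regularity via a priori estimates that exploit the higher regularity (\ref{csol1}) and the compatibility condition (\ref{csol2}). Local-in-time existence of a classical solution with the regularity stated in (\ref{csol4}) on some $[0,T_*]$ follows from standard constructions (for instance, regularizing $\n_0$ to be strictly positive so as to fall inside the framework of \cite{N,S}, then passing to the limit using the a priori bounds below); the compatibility condition (\ref{csol2}) is what provides a well-defined $L^2$ initial value $\sqrt{\n_0}u_t(\cdot,0) = -g_2$ in the presence of vacuum. To extend to all of $[0,\infty)$ it suffices to derive $T$-independent estimates matching (\ref{csol4}) and (\ref{wsol4}), and close by a continuation argument.

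The starting point is the package of estimates provided by Theorems~\ref{th0}--\ref{th1}, in particular $\|\n\|_{L^\infty}\le 2\tilde{\n}$, $\|\na u\|_{L^\infty(0,T;L^2)}$, $\|\sqrt{\n}\du\|_{L^2}\le Ce^{-\alpha_0 t/2}$, and the Lamé-elliptic bound $\|\na^2u\|_{L^q}\le C(\|\n\du\|_{L^q}+\|\na P\|_{L^q})$ obtained by viewing the momentum equation as $-\mu\Delta u-(\mu+\lm)\na\div u=-\n\du-\na P$. Differentiating this equation in time gives an equation for $u_t$; testing against $u_t$ (using the mass equation to handle $\n_t$-terms) and invoking Gagliardo-Nirenberg together with the known controls yields $\|\sqrt{\n}u_t\|_{L^\infty(0,T;L^2)}+\|\na u_t\|_{L^2(0,T;L^2)}$, with initial value controlled by $\|g_2\|_{L^2}$ via (\ref{csol2}). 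Differentiating once more and testing against $u_{tt}$ with a factor of $t$ delivers $t\|\sqrt{\n}u_{tt}\|_{L^\infty(0,T;L^2)}+t\|\na u_{tt}\|_{L^2(0,T;L^2)}$ and, by elliptic regularity, $t\|\na^2u_t\|_{L^\infty(0,T;L^2)}$ and $t\|\na^3u\|_{L^\infty(0,T;L^q)}$.

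The density regularity is then handled by differentiating the transport equation (\ref{ns})$_1$ twice in $x$ and estimating $\|\na^2\n\|_{L^q}$ by a Gronwall argument driven by $\int_0^T\|\na u\|_{L^\infty}dt$; this last quantity is finite by Gagliardo-Nirenberg applied to the already-controlled $\|\na^2u\|_{L^q}$. The equation for $P$ is treated identically since $P=\n^\ga$. To extract the exponential decay (\ref{wsol4}) for the higher-order norms, I would multiply each of the above energy inequalities by $e^{\alpha_0 t}$ (with $\alpha_0$ from Theorem~\ref{th0}), whereupon the forcing terms inherit exponential decay from the already-established lower-order decay; choosing $\nu_1$ possibly larger ensures positivity of the coefficients produced on the left-hand sides.

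The main technical obstacle is reconciling the behavior near $t=0$ with the decay as $t\to\infty$ in a single estimate: the equations for $u_{tt}$ and $\na^3u$ involve quantities that blow up as $t\to 0^+$, and the $t$- and $t^{1/2}$-weights in (\ref{csol4}) are precisely what absorb this singularity. The compatibility condition (\ref{csol2}) is essential because it gives the unweighted bound $\|\sqrt{\n}u_t(\cdot,0)\|_{L^2}\le\|g_2\|_{L^2}$ from which the whole hierarchy is initialized; threading the Gronwall arguments so that the weighted right-hand sides remain integrable while simultaneously preserving the exponential factor $e^{\alpha_0 t}$ for large $t$ is the delicate part of the argument. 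Uniqueness of the classical solution is inherited from Theorem~\ref{th1}.
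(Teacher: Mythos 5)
Your estimate hierarchy — Lam\'e-elliptic bounds for $\nabla^2 u$, time-differentiating the momentum equation and testing against $u_t$ resp.\ $u_{tt}$ with temporal weights, and a Gr\"onwall argument driven by $\int_0^T\|\nabla u\|_{L^\infty}\,dt$ for the density derivatives — is essentially the machinery of Lemmas \ref{c21}--\ref{c26}, so the technical core of the proposal is sound. There are, however, two structural gaps.

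\textbf{Logical ordering.} You propose to ``bootstrap from the unique global strong solution already produced by Theorem~\ref{th1}.'' In the paper's architecture this is circular: Theorem~\ref{th2} is proved \emph{first} (Section~5), and Theorems~\ref{th0} and \ref{th1} are then \emph{deduced from it} by mollifying $(\n_0,u_0)$, invoking the classical existence of Theorem~\ref{th2} for the smooth data, and passing to the limit via the $\de$-uniform estimates. Your fallback device — regularizing $\n_0$ to be strictly positive so as to enter the framework of \cite{N,S} — also misses the mark: the compatibility condition for the regularized data does not in general converge to (\ref{csol2}), and the vacuum must be handled directly. The paper instead quotes the local well-posedness result Lemma~\ref{lct} (from \cite{LLL}), which is designed precisely for $\n_0\ge 0$ with the compatibility condition (\ref{csol2}), so that $\sqrt{\n}\,\du(\cdot,0)$ equals $\pm g_2$ in $L^2$ (note also it is $\du$, the material derivative, not $u_t$, that the compatibility condition initializes; $\sqrt{\n_0}u_t(\cdot,0)$ differs by the term $\sqrt{\n_0}\,u_0\cdot\nabla u_0$).

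\textbf{The continuation mechanism.} You write ``to extend to all of $[0,\infty)$ it suffices to derive $T$-independent estimates matching (\ref{csol4}).'' This is not what happens, and indeed the higher-order quantities in (\ref{csol4}) are allowed to grow with $T$. The only $T$-independent ingredient that must be, and is, secured is the upper bound on $\n$. The heart of the extension argument is the self-improvement of Lemma~\ref{g5}: under the a priori assumption $\|\n\|_{L^\infty}\le 2\tilde\n$ on $[0,T]$, one obtains $\|\n\|_{L^\infty}\le\tfrac{3}{2}\tilde\n$ on $[0,T]$ via the Zlotnik inequality (Lemma~\ref{zli}), using the time-split control of $\int\|G\|_{L^\infty}\,dt$ from (\ref{pd5500}) and (\ref{pd57}). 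This strict gap is then exploited in the bootstrap: setting $T^*=\sup\{T:(\ref{pd51})\text{ holds}\}$, a finite $T^*$ would contradict the restart of Lemma~\ref{lct} at $T^*$ (where (\ref{csol1})--(\ref{csol2}) are re-verified from Lemmas~\ref{c24}--\ref{c26} and (\ref{y526})). Without this closure-of-density argument, the higher-order a priori estimates you list have no uniform interval on which to live, and the continuation collapses. Finally, note that (\ref{wsol4}) concerns only the $L^2$-level decay of $\n-\ol{\n_0}$, $\o$, $\div u$, and $\sqrt{\n}\du$, which is already Lemma~\ref{g3}; the proposal's extra step of multiplying the top-order inequalities by $e^{\alpha_0 t}$ is unnecessary for the statement being proved.
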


Finally, following the approach in \cite{CL,LX} and leveraging the exponential decay estimates $(\ref{wsol4})$,
we can establish the following large-time behavior of the spatial gradient of the density for the strong solution in Theorem $\ref{th1}$ when vacuum
appears initially.
\begin{theorem}\la{th3}
In addition to the assumptions in Theorem $\ref{th1}$, 
we further assume that there exists some point $x_0\in \T^2$ satisfying $\n_0(x_0)=0$. 
Then for any $r>2$, there exists a positive constant $C$ depending only on $r,\ \mu,\ E_0,\ \ga,\ \| \rho_0 \|_{L^1\cap L^\infty}$, such that for any $t \ge 1$
\be\la{pbu0}\ba
\| \na \n(\cdot ,t) \|_{L^r} \ge C e^{\alpha_0 \frac{r-2}{r} t }.
\ea\ee	
\end{theorem}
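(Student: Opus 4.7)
The strategy combines the preservation of vacuum along particle trajectories with the exponential $L^s$-decay estimate \eqref{wsol4} (which holds for strong solutions by Theorem \ref{th1}) and Morrey's embedding $W^{1,r}(\T^2)\hookrightarrow C^{0,1-2/r}(\T^2)$ (valid for $r>2$). The point is that if $\n$ vanishes at some moving point $x(t)$ while the mean $\overline{\n_0}>0$ is preserved, then Morrey's inequality forces $\|\na\n(\cdot,t)\|_{L^r}$ to be large precisely because $\|\n-\overline{\n_0}\|_{L^1}$ is small.

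\emph{Persistence of vacuum.} The regularity \eqref{ssol4} supplies $u\in L^{(q+1)/q}(0,T;W^{2,q})$ with $q>2$, so by Morrey $u(\cdot,t)$ is Lipschitz in $x$ for a.e.~$t$ with Lipschitz constant in $L^{(q+1)/q}(0,T)$. Hence I can solve the ODE $\dot X(t)=u(X(t),t)$ with $X(0)=x_0$ on $\T^2$ and obtain a unique Lagrangian trajectory. Writing \eqref{ns}$_1$ along this trajectory,
\begin{equation*}
\frac{d}{dt}\n(X(t),t)=-\n(X(t),t)\,\div u(X(t),t),
\end{equation*}
Gr\"onwall's inequality gives $\n(X(t),t)=\n_0(x_0)=0$ for all $t\ge 0$. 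Denote $x(t):=X(t)$ and $R:=\overline{\n_0}>0$. Morrey's embedding then furnishes a constant $C_0=C_0(r)$ such that
\begin{equation*}
\n(x,t)=|\n(x,t)-\n(x(t),t)|\le C_0\,|x-x(t)|^{1-2/r}\,M(t),\qquad M(t):=\|\na\n(\cdot,t)\|_{L^r}.
\end{equation*}

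\emph{Lower bound on $M(t)$.} Define $r_t:=\min\{c_1,(R/(2C_0M(t)))^{r/(r-2)}\}$, where $c_1>0$ is a fixed small constant chosen so that $B_{r_t}(x(t))\subset\T^2$ is a genuine geodesic ball. On $B_{r_t}(x(t))$ we have $\n\le R/2$ and hence $|\n-R|\ge R/2$. Invoking \eqref{wsol4} with $s=1$,
\begin{equation*}
Ce^{-2\alpha_0 t}\ge \|\n-R\|_{L^1}\ge \int_{B_{r_t}(x(t))}|\n-R|\,dx\ge \frac{\pi R}{2}\,r_t^2.
\end{equation*}
If for $t\ge 1$ the minimum defining $r_t$ is attained at $c_1$, then $M(t)$ is trivially bounded below by a positive constant and \eqref{pbu0} holds after enlarging $C$. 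Otherwise $r_t=(R/(2C_0M(t)))^{r/(r-2)}$, and rearranging yields $M(t)^{2r/(r-2)}\ge c_2\,e^{2\alpha_0 t}$, so $M(t)\ge c_3\,e^{\alpha_0 t(r-2)/r}$, which is \eqref{pbu0}.

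The main technical subtlety is verifying that the Lagrangian trajectory from $x_0$ is well defined for all $t\ge 0$, since at $t=0$ the velocity is only in $H^1$. It is enough, however, that $u\in L^1(0,T;W^{1,p})$ for some $p>2$, which is supplied by \eqref{ssol4}; alternatively the DiPerna--Lions theory of regular Lagrangian flows applies in view of the $L^\infty$ bound on $\n$ and the continuity equation. Everything else reduces to Morrey's inequality and the decay estimate \eqref{wsol4} already proved, so no additional PDE analysis is required.
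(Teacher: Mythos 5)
Your proof is correct, and the core mechanism is the same as the paper's: vacuum propagates along the particle trajectory starting from $x_0$, forcing $\n(X(t),t)\equiv 0$; combined with the exponential decay of $\|\n-\ol{\n_0}\|_{L^s}$ from \eqref{wsol4} and a Sobolev-type inequality, this pins the gradient from below. Where you diverge is in the specific Sobolev tool and the way the decay is exploited. The paper applies the interpolation inequality $\|v-\ol{v}\|_{L^\infty}\le C\|\na v\|^a_{L^r}\|v-\ol{v}\|^{1-a}_{L^2}$ with $a=r/(2(r-1))$ (the $q\mapsto r$, $r\mapsto 2$ instance of \eqref{gn12}), bounds the left side below by $\ol{\n_0}$ using $\n(X(t),t)=0$, plugs in the $L^2$ decay, and is done in one line. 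You instead use the Morrey Hölder-seminorm estimate $W^{1,r}\hookrightarrow C^{0,1-2/r}$, build a small ball $B_{r_t}(X(t))$ on which $\n\le R/2$, and bound $\|\n-R\|_{L^1}\ge \frac{\pi R}{2}r_t^2$, then invoke the $s=1$ decay. Tracking the exponents, both routes deliver exactly the claimed rate $\alpha_0(r-2)/r$. Your argument is a hair longer and requires the extra case split on whether $r_t$ equals $c_1$, but it has the advantage of being self-contained at the level of Morrey's inequality rather than citing a specific multiparameter Gagliardo--Nirenberg inequality; it also makes the geometric intuition (a vacuum point forces a cusp of measurable size in the density profile) explicit. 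Your care about well-definedness of the Lagrangian flow via $u\in L^{(q+1)/q}(0,T;W^{2,q})\hookrightarrow L^1_{\rm loc}(W^{1,\infty})$ is appropriate; the paper asserts well-definedness of $X(s)$ from \eqref{ssol4} without elaboration, and your justification is a valid way to fill that in.
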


\begin{remark}\la{lrk1}
Combining the condition $q>2$ with (\ref{csol4}), we deduce that
\be\la{csol5}\ba
\n, P(\n) \in C([0,T];W^{2,q})\hookrightarrow C\left([0,T];C^1(\T^2) \right).
\ea\ee
Moreover, the standard embedding and (\ref{csol4}) yield for any $0<\tau<T<\infty$
\be\la{csol6}\ba
u\in L^\infty(\tau ,T;W^{3,q})\cap H^1(\tau ,T;H^2)\hookrightarrow
C\left([\tau ,T];C^2(\T^2) \right),
\ea\ee
and	
\be\la{csol7}\ba
u_t\in L^\infty(\tau ,T;H^2)\cap H^1(\tau ,T;H^1)\hookrightarrow
C\left([\tau ,T]; C(\T^2) \right).
\ea\ee
In view of (\ref{ns}), (\ref{csol5}) and (\ref{csol6}), we have
\be\la{csol8}\ba
\n_t = -\n \div u - u \cdot \na \n \in C(\T^2 \times [\tau,T]).
\ea\ee
Together with (\ref{csol5}), (\ref{csol6}) and (\ref{csol7}), we conclude that
the solution in Theorem \ref{th2} 
is in fact a classical solution to system \eqref{ns}--\eqref{i3} in $\T^2 \times (0,\infty)$.
\end{remark}

\begin{remark}\la{lrk02}
In our preceding analysis, we may assume without loss of generality that the initial data satisfy the condition
\bnn
\int \n_0 u_0 dx = 0.
\enn
If this condition fails to hold, we perform the following Galilean transformation:
First, we define
\bnn
v_0 = \left( \int \n_0 dx \right)^{-1} \int \n_0 u_0 dx,
\enn
and then set
\bnn
( \check{\n}(x,t),\check{u}(x,t) ) := ( \n(x+v_0t,t),u(x+v_0t,t)-v_0 ).
\enn
A direct calculation shows that the transformed solution satisfies the desired condition:
\bnn
\int \check{\n}(x,0) \check{u}(x,0) dx = 0.
\enn
\end{remark}

\begin{remark}\la{lrk2}
It is worth noting that in our existence Theorem \ref{th0}, we only require the bulk viscosity coefficient to be sufficiently large.
This represents a improvement over the work of Danchin-Mucha \cite[Theorem 2.1]{DM}, in which the initial data are also required to satisfy $\| \div u_0 \|_{L^2} \le K \nu^{-1/2}$ for some $K>0$.
Our result therefore provides a direct generalization of their existence theory.
\end{remark}

We now make some comments on the analysis of this paper. Note that 
for initial data (\ref{csol1})--(\ref{csol2}), the local existence and uniqueness
of classical solutions to the problem (\ref{ns})--(\ref{i3}) have been established 
in a manner similar to \cite{LLL}.
Therefore, to extend the classical solution globally in time, it is 
essential to obtain global a priori estimates for smooth solutions to 
(\ref{ns})--(\ref{i3}) in appropriate higher norms. 

The key issue addressed in this paper is to obtain both the 
time-independent upper bound of the density and the time-dependent higher
norm estimates of the solution $(\n, u)$. We start with the basic
energy estimate (see (\ref{0x1})) and the $L^2(0,T;L^2)$-norm of the pressure(see (\ref{0xx2})).
Then, combined with the initial layer analysis, these estimates imply that the
$L^\infty(0,T;L^2)$-norm of the velocity gradient 
exhibits a singularity of the order $O(t^{\frac{1}{2}})$ near $t = 0$.
This, in turn, leads to the desired estimate of the 
$L^1(0,\min\{1,T\};L^\infty)$-norm of the effective
viscous flux (see (\ref{gw}) for the definition).
On the other hand, we employ the method in \cite{H1,LZ} to obtain the 
estimate of the material derivative of the velocity.
We derive that the $L^\infty(1,T;L^2)$ norm of $\sqrt{\n} \du$
and the $L^2(1,T;L^2)$ norm of $\na \du$ are uniform with respect to $\nu$ (see (\ref{pd11})).
Subsequently, following the approach in \cite{CL},
we obtain the exponential decay of the density,
the velocity gradient and the material derivative of  the velocity
(see (\ref{pd301})--(\ref{pd303})).
Based on this decay arguments, we derive the time-independent estimates on the 
$L^{1}(\min\{1,T\},T;L^\infty)$-norm
of the effective viscous flux (see (\ref{pd57})).
Then, motivated by \cite{LX}, 
we apply the Zlotnik inequality (see Lemma \ref{zli})
to slightly pull back the upper bound of the density 
(see (\ref{pd56}) and (\ref{pd511})).

Next, similar to the arguments in \cite{HLX3,HLX2,LLL,HLX1},
we proceed to bound the gradients of both density and velocity, 
which is  achieved by a logarithm Gr\"onwall's inequality and 
Beale-Kato-Majda's inequality(see Lemma \ref{bkm}) and the a priori estimates
derived above. Furthermore, these estimates yield a bound on
 the $L^1(0,T;L^\infty)$-norm of the gradient of the velocity 
(see Lemma \ref{s23} and its proof ). 
Finally, using these results,
 we  establish  higher order derivative estimates
of the density and the velocity similar to the arguments in \cite{LLL, HLX3}.

Furthermore, with the $\nu$-independent estimates derived above
(see (\ref{0x1}), (\ref{0x10}), (\ref{pd11}) and (\ref{pd51})), we are able to 
get the singular limit of the compressible Navier-Stokes equations to 
the incompressible one (see the proof of Theorem \ref{th01} in Section 5) 
with the aid of the global well-posedness results established in \cite{DM2,HW}.

Compared to \cite{DM}, one of the major new challenges 
arises from the absence of the restriction $\nu \| \div u_0 \|^2_{L^2}$ on the initial data.
To obtain the singular limit, we are required to establish a priori estimates independent of the viscosity coefficient $\nu$.
However, owing to the absence of the restriction, the upper bounds of the $L^\infty(0,T;L^2)$ norms of $\na u$ and $\sqrt{\nu} \div u$ will be related to $\nu$ (see (\ref{x1000})).
To ensure that this estimate is uniform with respect to $\nu$, we consider introducing the time-layer estimate (see (\ref{0x10})) to avoid the appearance of the initial data.
It should be noted that following the approach in \cite{DM}, we will not be able to introduce the time-layer estimate.
More precisely, when estimating $\na u$, the method employed by \cite{DM} to handle the convective term $\int \n u \cdot \na u \cdot \dot{u} dx$ is as follows:
\be\ba\nonumber
\int \n  u \cdot \na u\cdot \du dx \leq C\left(\int \n |\du|^2 dx\right)^\frac{1}{2} 
\left(\int \n |u|^4 dx\right)^\frac{1}{4}\left(\int |\na u|^4 dx\right)^\frac{1}{4}.
\ea\ee
Then, the second term on the right-hand side is dealt with by applying the logarithmic Ladyzhenskaya inequality.
Nevertheless, the appearance of the logarithmic term is fatal for the introduction of the initial time layer.
Therefore, we have adopted a new method. According to the observations in \cite{HL2}, we decompose $\n \dot{u}$ into $\nu \na \div u+\mu \na^\bot \o - \na P$, and then estimate these terms respectively (see (\ref{0x11})--(\ref{0x20})). 
The fact that this method does not require the logarithmic inequality to estimate $\n \dot{u}$ represents the innovation in establishing the singular limit.

The remainder of this paper is organized as follows: In Section 2, we
will present some known facts and  elementary inequalities  
requisite for later analysis. Section 3 is dedicated to deriving the upper
bound of the density, which is uniform in time and independent of lower
bound of the density and thus, essential for extending the local solution to
all time. Based on the upper bound of the density, a priori estimates of higher-order
derivatives are established in Section 4. Finally, the main results,
Theorems \ref{th0}--\ref{th3} are proved in Section 5, relying on the previous
bounds.

\section{Preliminaries}
In this section, we will recall some known facts and elementary inequalities
which will be used frequently later.

First, we have the following local existence theory of the classical solution, 
and its proof can be found in \cite{LLL}.
\begin{lemma}\la{lct}
Assume $\left( \n _0 ,u_0  \right)$ satisfies that for some $q>2$
\be \la{lct1}\ba
\n _0 \in W^{2,q}, \quad u_0 \in H^2,
\ea \ee 
and the compatibility condition \eqref{csol2}. 
Then there is a small time $T>0$ depending only on
$\mu,\ \lambda,\ \ga,\ q,\ \| \n_0\|_{W^{2,q}},\ \|u_0\|_{H^2}$ and $\| g_2 \|_{L^2}$,
such that there exists a unique classical solution $(\n,u)$ to the problem 
\eqref{ns}--\eqref{i3} in $\T^2 \times (0,T]$ satisfying \eqref{csol4}.

\end{lemma}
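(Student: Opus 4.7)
The plan is to follow the standard vacuum-approximation scheme for the compressible Navier–Stokes system, whose details (in a closely related 2D setting) appear in the paper \cite{LLL} cited just before the statement. The idea is to regularize the possibly degenerate initial density so that the classical non-vacuum local existence theory applies, and then pass to the limit using a priori estimates that are compatible with the weighted norms appearing in (\ref{csol4}).

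First I would set $\n_0^\delta := \n_0 + \delta$ for small $\delta>0$ and keep $u_0$ unchanged. Since $\n_0^\delta$ is uniformly positive and belongs to $W^{2,q}$, and $u_0\in H^2$, the classical local-in-time existence theory of Nash/Matsumura–Nishida type (or its strong solution variant) yields a unique classical solution $(\n^\delta,u^\delta)$ on some interval $[0,T_\delta]$ with $\n^\delta\geq \delta/2$. The compatibility condition (\ref{csol2}) for the original data supplies a natural candidate for the initial value of $\sqrt{\n^\delta}u_t^\delta$ at $t=0$: differentiating the momentum equation in time formally gives $\sqrt{\n_0^\delta}u_t^\delta(\cdot,0) \to \sqrt{\n_0}\,g_2$ in $L^2$ as $\delta\to 0$, and (\ref{csol2}) is precisely what is needed to bound this quantity uniformly.

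Next I would derive a priori estimates for $(\n^\delta,u^\delta)$ independent of $\delta$ on a common time interval $[0,T]$ with $T$ depending only on the quantities listed in the lemma. The key steps are: (i) basic energy estimates in $L^\infty(0,T;L^2)$ for $\sqrt{\n^\delta}u^\delta$ and $\na u^\delta$; (ii) first-order time-derivative estimates, testing the momentum equation differentiated in $t$ against $u_t^\delta$ to obtain $\sqrt{\n^\delta}u_t^\delta\in L^\infty(0,T;L^2)$ and $\na u_t^\delta\in L^2(0,T;L^2)$, where (\ref{csol2}) controls the boundary term at $t=0$; (iii) elliptic regularity applied to the Lamé-type operator in the momentum equation to upgrade $u^\delta$ to $L^\infty(0,T;H^2)\cap L^2(0,T;H^3)$ and, with $W^{2,q}$ data, to $L^{(q+1)/q}(0,T;W^{3,q})$; (iv) propagating $W^{2,q}$-regularity of the density by differentiating the continuity equation twice and using Gronwall, with the logarithmic Beale–Kato–Majda inequality (Lemma \ref{bkm}) to control $\|\na u^\delta\|_{L^\infty}$; (v) the $t^{1/2}$-weighted estimates, obtained by multiplying the time-differentiated momentum equation by $t\,u_{tt}^\delta$ and iterating.

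Finally I would use these $\delta$-independent bounds to pass to the limit. Weak-* convergence plus the Aubin–Lions lemma yield $(\n^\delta,u^\delta)\to(\n,u)$ in strong topologies sufficient to identify the nonlinear terms, so $(\n,u)$ solves (\ref{ns})–(\ref{i3}) and inherits all the regularity in (\ref{csol4}); the embeddings noted in Remark \ref{lrk1} then upgrade $(\n,u)$ to a genuine classical solution on $\T^2\times(0,T]$. Uniqueness follows by standard energy estimates on the difference of two solutions, using the $L^\infty$-bound on the density and the $L^1(0,T;L^\infty)$ bound on $\na u$. The principal obstacle is step (ii): producing a $\delta$-independent bound on $\sqrt{\n^\delta}u_t^\delta$ at $t=0$, which is exactly the point at which the compatibility condition (\ref{csol2}) is indispensable, since without it the initial value of $u_t$ is not controlled in $L^2(\sqrt{\n_0}\,dx)$ and the chain of higher-order estimates breaks down near $t=0$.
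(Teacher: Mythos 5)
Your outline is the vacuum-approximation scheme used in the cited reference \cite{LLL}, which is exactly what the paper invokes for this lemma, and the major steps (non-vacuum local theory for $\n_0^\delta=\n_0+\delta$, $\delta$-independent a priori estimates keyed to the compatibility condition, Aubin--Lions passage to the limit, continuation and uniqueness) line up with that proof. The one small slip is the stated limit: evaluating the momentum equation at $t=0$ together with \eqref{csol2} shows that the \emph{material derivative} $\sqrt{\n_0^\delta}\,\dot u^{\delta}(\cdot,0)=-(\n_0/\n_0^\delta)^{1/2}g_2-(\n_0^\delta)^{-1/2}\nabla\bigl(P(\n_0^\delta)-P(\n_0)\bigr)$ is bounded in $L^2$ by $\|g_2\|_{L^2}$ plus a small correction, not that $\sqrt{\n_0^\delta}u_t^\delta(\cdot,0)$ converges to $\sqrt{\n_0}\,g_2$, but this imprecision does not affect the scheme.
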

Next, the following Gagliardo-Nirenberg's inequalities (see \cite{NI}) will be used frequently later.
\begin{lemma}\la{gn1}
Let $u\in H^1(\T^2)$, there exists a positive constant $C$ depending only on $\T^2$ such that for any $2<p<\infty$
\be\ba\la{gn11}
\| u-\ol{u}\|_{L^p} \le Cp^{1/2}\| u-\ol{u}\|^{2/p}_{L^2} \| \na u\|^{1-2/p}_{L^2}.
\ea\ee
Furthermore, for $1\le r <\infty$, $2<q<\infty$, 
there exists a positive constant $C$ depending only on $r,\ q$
and $\T^2 $ such that for every function $v\in W^{1,q}(\T^2)$ it holds that
\be\ba\la{gn12}
\|v-\ol{v}\|_{L^\infty} \le C\|v-\ol{v}\|^{r(q-2)/2q+r(q-2)}_{L^r} \|\na v\|^{2q/2q+r(q-2)}_{L^q}.
\ea\ee
\end{lemma}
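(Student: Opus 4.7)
The plan is to reduce both parts to the zero-mean setting and then apply standard interpolation machinery on $\T^2$. Since $\int (u-\ol{u})\,dx=0$ and $\na u=\na(u-\ol{u})$, I may freely replace $u$ by $u-\ol{u}$ in part one and $v$ by $v-\ol{v}$ in part two, so that Poincar\'e's inequality $\|w\|_{L^s(\T^2)}\le C(s)\|\na w\|_{L^s}$ becomes available for all $s\in[1,\infty]$ and will be used to pass between norms of $w$ and norms of $\na w$ whenever convenient.

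For \eqref{gn11}, the essential ingredient is the sharp $\sqrt{p}$ growth of the Sobolev embedding constant in $H^1(\T^2)\hookrightarrow L^p(\T^2)$. One efficient way to extract it is via the Moser--Trudinger inequality on $\T^2$: there exist constants $\alpha_0,C_0>0$ such that $\int_{\T^2}\exp\bigl(\alpha_0 w^2/\|\na w\|_{L^2}^2\bigr)dx\le C_0$ for every mean-zero $w\in H^1$. Expanding the exponential term by term yields $\|w\|_{L^{2k}}^{2k}\le C\,k!\,\alpha_0^{-k}\|\na w\|_{L^2}^{2k}$, and Stirling's formula $k!\sim(k/e)^k\sqrt{2\pi k}$ upgrades this to $\|w\|_{L^{2k}}\le C\sqrt{k}\,\|\na w\|_{L^2}$, which extends to non-even $p$ by H\"older. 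To promote this bound to the full interpolation form, I would use a scaling argument: either extend $w$ to $\mr^2$ and rescale, or work directly on $\T^2$ via a Littlewood--Paley decomposition, splitting low versus high frequencies at a threshold chosen to balance $\|w\|_{L^2}$ against $\|\na w\|_{L^2}$. The balanced threshold is what produces the product $\|w\|_{L^2}^{2/p}\|\na w\|_{L^2}^{1-2/p}$, while the absolute constant still scales like $\sqrt{p}$ thanks to the previous step.

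For \eqref{gn12}, I would start from Morrey's embedding $W^{1,q}(\T^2)\hookrightarrow L^\infty(\T^2)$ valid for $q>2$, which combined with Poincar\'e gives $\|v\|_{L^\infty}\le C\|\na v\|_{L^q}$. The precise exponent $\theta=r(q-2)/(2q+r(q-2))$ is dictated by scaling: under $v_\lambda(x)=v(\lambda x)$ on $\mr^2$ one has $\|v_\lambda\|_{L^\infty}=\|v\|_{L^\infty}$, $\|v_\lambda\|_{L^r}=\lambda^{-2/r}\|v\|_{L^r}$, and $\|\na v_\lambda\|_{L^q}=\lambda^{1-2/q}\|\na v\|_{L^q}$, so scale-invariance of $\|v\|_{L^\infty}\le C\|v\|_{L^r}^\theta\|\na v\|_{L^q}^{1-\theta}$ forces $-2\theta/r+(1-\theta)(1-2/q)=0$, yielding exactly the stated $\theta$. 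The inequality itself then follows either from Nirenberg's interpolation theorem applied to the endpoint embedding $W^{1,q}\hookrightarrow L^\infty$ together with the trivial identity on $L^r$, or by optimizing $\lambda$ after reducing to a single-scale estimate.

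The main technical obstacle is the sharp $p^{1/2}$ constant in part one, which is lost in a naive Morrey-type chain (since the embedding constants of $W^{1,s}\hookrightarrow L^p$ blow up as $p\to\infty$). Controlling this growth requires either the Moser--Trudinger/Stirling route above or a careful Littlewood--Paley dyadic summation with the cutoff tuned to the ratio $\|\na w\|_{L^2}/\|w\|_{L^2}$; the remaining steps, including the scaling analysis underlying \eqref{gn12}, are standard once the sharp Sobolev bound is in place.
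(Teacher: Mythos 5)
The paper does not prove Lemma~\ref{gn1}; it simply cites Nirenberg \cite{NI}, so there is no internal argument to compare against. Evaluated on its own terms, your sketch is essentially sound, but one of the two routes you offer for the crucial upgrade in part one does not work as stated, and this is worth flagging.

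For \eqref{gn11}, the Moser--Trudinger plus Stirling computation correctly produces $\|w\|_{L^p}\le C\sqrt{p}\,\|\nabla w\|_{L^2}$ for mean-zero $w$. To convert this into the interpolation form, your first suggestion (extend $w$ to $\mathbb{R}^2$, apply the same bound there, and rescale $w_\mu(x)=w(\mu x)$ with $\mu=\|w\|_{L^2}/\|\nabla w\|_{L^2}$ chosen to balance $\|w_\mu\|_{L^2}$ against $\|\nabla w_\mu\|_{L^2}$) is correct and complete: since $\|w_\mu\|_{L^p}=\mu^{-2/p}\|w\|_{L^p}$ and the gradient norm is scale-invariant, the factor $\mu^{2/p}$ exactly converts $\sqrt{p}\,\|\nabla w\|_{L^2}$ into $\sqrt{p}\,\|w\|_{L^2}^{2/p}\|\nabla w\|_{L^2}^{1-2/p}$, and the extension operator preserves the relevant norms up to constants once Poincar\'e is used. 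However, your second suggestion, ``splitting low versus high frequencies at a threshold chosen to balance $\|w\|_{L^2}$ against $\|\nabla w\|_{L^2}$,'' does not by itself deliver the claimed estimate: the low-frequency piece indeed yields $\|w\|_{L^2}^{2/p}\|\nabla w\|_{L^2}^{1-2/p}$ under the threshold $N=\|\nabla w\|_{L^2}/\|w\|_{L^2}$, but applying the Moser--Trudinger bound to the high-frequency piece only gives $\sqrt{p}\,\|\nabla w\|_{L^2}$, and the ratio $\|\nabla w\|_{L^2}/(\|w\|_{L^2}^{2/p}\|\nabla w\|_{L^2}^{1-2/p})=(\|\nabla w\|_{L^2}/\|w\|_{L^2})^{2/p}$ can be arbitrarily large for highly oscillatory $w$. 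There is no self-contained way to recover the missing $\|w\|_{L^2}^{2/p}$ factor from a frequency split alone; one needs the dilation structure, i.e.\ the extension-to-$\mathbb{R}^2$ route or a weak-$L^2$/Moser--Trudinger distribution-function argument with a $\|w\|_{L^2}$-dependent truncation. Since you list the extension route as a valid alternative, the overall plan survives, but the Littlewood--Paley remark should be dropped or substantially amended. For \eqref{gn12}, the scaling analysis that pins down $\theta=r(q-2)/(2q+r(q-2))$ is correct, and the inequality is the standard Gagliardo--Nirenberg interpolation at the $L^\infty$ endpoint for $q>2$ in two dimensions; your reduction to the zero-mean case via Poincar\'e and appeal to the Nirenberg interpolation theorem is appropriate.
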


The following Poincar\'e type inequality can be found in \cite{F}.
\begin{lemma}\la{pt}
	Let $v\in H^1$, and let $\n$ be a non-negative function satisfying
\be\ba\nonumber
0<M_1\leq \int \n dx,\quad \int \n^\ga dx \leq M_2,
\ea\ee
with $\ga>1$. Then there exists a positive constant $C$ depending only on 
$M_1,\ M_2$ and $\ga$ such that
\be\ba\la{pt1}
\|v\|_{L^2}^2 \leq C\int \n |v|^2 dx + C \|\na v\|_{L^2}^2.
\ea\ee
\end{lemma}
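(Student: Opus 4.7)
The natural approach is a contradiction argument using weak compactness, exploiting that the lower bound on the mass prevents $v$ from degenerating to a nontrivial constant. Suppose the claimed inequality fails: then for each $n\in\mathbb{N}$ there exist $v_n\in H^1$ and a non-negative $\n_n$ with $\int \n_n\,dx\geq M_1$ and $\int \n_n^\ga dx\leq M_2$ such that
\be\nonumber
\|v_n\|_{L^2}^2 > n\Bigl(\int \n_n |v_n|^2\,dx + \|\na v_n\|_{L^2}^2\Bigr).
\ee
After normalizing $\|v_n\|_{L^2}=1$, I would get $\|\na v_n\|_{L^2}\to 0$ and $\int\n_n|v_n|^2\,dx\to 0$.

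Next I would extract a subsequence. Since $\{v_n\}$ is bounded in $H^1(\T^2)$, up to a subsequence $v_n\rightharpoonup v$ in $H^1$ and $v_n\to v$ strongly in $L^p$ for every $1\le p<\infty$ by the Rellich--Kondrachov theorem on the torus. The bound $\|\na v_n\|_{L^2}\to 0$ forces $\na v=0$, so $v\equiv c$ is a constant with $|c|^2|\T^2|=1$; in particular $c\neq 0$. Simultaneously, $\{\n_n\}$ is bounded in $L^\ga$, so up to a further subsequence $\n_n\rightharpoonup \n$ in $L^\ga$, and weak convergence in $L^\ga\hookrightarrow L^1$ together with $\int \n_n\,dx\geq M_1$ gives $\int \n\,dx\geq M_1>0$.

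Finally I would pass to the limit in the product $\int \n_n|v_n|^2\,dx$. Since $v_n\to c$ in $L^{2\ga/(\ga-1)}(\T^2)$, we have $|v_n|^2\to c^2$ strongly in $L^{\ga/(\ga-1)}(\T^2)$, and combined with $\n_n\rightharpoonup \n$ in $L^\ga$ the weak--strong pairing yields
\be\nonumber
\int \n_n|v_n|^2\,dx \longrightarrow c^2\int \n\,dx \geq c^2 M_1 > 0,
\ee
contradicting $\int \n_n|v_n|^2\,dx\to 0$. Hence the inequality holds with some $C=C(M_1,M_2,\ga)$.

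The only subtle point is the weak--strong convergence of the product $\n_n|v_n|^2$, which requires $|v_n|^2$ to converge strongly in the dual space of $L^\ga$; this is where the hypothesis $\ga>1$ enters essentially, since it makes $\ga/(\ga-1)<\infty$ so that Rellich provides the needed strong $L^{\ga/(\ga-1)}$ convergence on the compact torus. The uniform positivity of $\int \n\,dx$ is the second delicate point and is exactly what the lower mass bound $M_1$ supplies. No compatibility with the flow equations is needed; the lemma is purely functional-analytic.
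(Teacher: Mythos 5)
The paper does not supply its own proof of this lemma; it simply cites Feireisl's monograph \cite{F}, so there is no in-text argument to compare against. Your compactness-and-contradiction proof is correct and complete: normalization (legitimate since $v_n\neq 0$), boundedness in $H^1$, Rellich compactness in $L^p(\T^2)$ for all $p<\infty$, lower semicontinuity forcing the limit to be a nonzero constant $c$, reflexivity of $L^\ga$ (here $\ga>1$ is essential) giving a weak limit $\n$ with $\int \n\,dx\geq M_1$, and the weak-strong pairing in the duality $L^\ga$--$L^{\ga/(\ga-1)}$ to obtain $\int\n_n|v_n|^2\,dx\to c^2\int\n\,dx>0$, a contradiction. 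Your remark that $\ga>1$ enters twice (reflexivity and finiteness of the dual exponent $\ga/(\ga-1)$) is exactly right, and you correctly observe that the statement is purely functional-analytic, independent of the PDE.

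Worth noting for comparison: a short \emph{direct} proof is also available and has the advantage of producing an effective constant. Write $v=(v-\ol v)+\ol v$ and use the classical Poincar\'e inequality $\|v-\ol v\|_{L^2}\leq C\|\na v\|_{L^2}$; then estimate $\ol v$ from the identity $\ol v\int\n\,dx=\int\n v\,dx-\int\n(v-\ol v)\,dx$, bounding the first term by Cauchy--Schwarz in the weighted space, $\bigl|\int\n v\,dx\bigr|\leq\bigl(\int\n\,dx\bigr)^{1/2}\bigl(\int\n|v|^2dx\bigr)^{1/2}$ with $\int\n\,dx\leq M_2^{1/\ga}$, and the second by H\"older ($\n\in L^\ga$, $v-\ol v\in L^{\ga/(\ga-1)}$) together with the Sobolev--Poincar\'e bound $\|v-\ol v\|_{L^{\ga/(\ga-1)}}\leq C\|\na v\|_{L^2}$. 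Since $\int\n\,dx\geq M_1$, this yields $|\ol v|^2\leq C(M_1,M_2,\ga)\bigl(\int\n|v|^2dx+\|\na v\|_{L^2}^2\bigr)$ and hence the lemma. Both routes are standard; yours is a clean abstraction argument, while the direct one is quantitative.
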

Next, for $\na^{\bot}:=(-\pa_2,\pa_1)$, denoting the material derivative 
of $f$ by $\frac{D}{Dt}f=\dot{f}:=f_t + u\cdot\na f$,
we now state standard $L^{p}$-estimate for the following elliptic system derived from the momentum equations in (\ref{ns}): 
\be\ba\la{p1}
\Delta G = \div(\n\dot{u}),\quad \mu \Delta \o = \na^{\bot} \cdot(\n \dot{u}),
\ea\ee
with
\be\ba\la{gw}
G:=(2\mu + \lam)\div u - (P-\ol{P}),
\ea\ee
and $\o$ given by (\ref{xd}).

With these notations defined above, we state the following lemma.
\begin{lemma}\la{estg}
Let $(\n,u)$ be a smooth solution of \eqref{ns}. Then for $1<p<\infty$ and positive integer $k \ge 1$, 
there exists a positive constant $C$ depending only on $k,\ p$ and $\mu$, such that 	
\be\la{p2}
  \|\na^k G\|_{L^p}+\|\na^k \o\|_{L^p} \leq C \| \na ^{k-1}(\n \dot{u}) \|_{L^p}.
  \ee
\end{lemma}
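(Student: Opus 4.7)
The plan is to derive elliptic equations for $G$ and $\omega$ directly from the momentum equation and then invoke standard $L^p$ elliptic regularity (Calderón--Zygmund theory) on the torus.

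First, I would rewrite the momentum equation in $\eqref{ns}$ as
\be\ba\nonumber
\n\dot u = \mu\Delta u + (\mu+\lm)\na\div u - \na P
       = \mu\na^\bot\o + \nu\na\div u - \na P,
\ea\ee
using the identity $\Delta u=\na\div u-\na^\bot\o$ in 2D. Taking the divergence of this equation, the curl term vanishes and I obtain
\be\ba\nonumber
\div(\n\dot u) = \nu\Delta\div u - \Delta P = \Delta G,
\ea\ee
where $G$ is defined in $\eqref{gw}$ (note that the constant $\ol P$ disappears under $\Delta$). Taking $\na^\bot\cdot$ of the momentum equation, the gradient terms vanish and I get the second identity in $\eqref{p1}$, namely $\mu\Delta\o = \na^\bot\cdot(\n\dot u)$.

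Next, I would verify that both $G$ and $\o$ have zero mean on $\T^2$, which is what makes $\Delta^{-1}$ well-defined on the torus. For $\o=\p_1 u^2-\p_2 u^1$ this is immediate from periodicity, while for $G$ one has $\int \nu\div u\,dx=0$ by periodicity and $\int(P-\ol P)\,dx=0$ by definition of $\ol P$. Therefore both equations in $\eqref{p1}$ are solvable and yield (formally) $G=\Delta^{-1}\div(\n\dot u)$ and $\mu\o=\Delta^{-1}(\na^\bot\cdot(\n\dot u))$.

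The final step is to apply the classical $L^p$ elliptic estimate on $\T^2$: for any mean-zero $f$ with $\Delta f=\div F$ (resp. $\Delta f=\na^\bot\cdot F$) and any $1<p<\infty$, one has $\|\na f\|_{L^p}\le C(p)\|F\|_{L^p}$. This follows from the boundedness of the Riesz transforms $\na(-\Delta)^{-1}\div$ on $L^p(\T^2)$. Iterating this (or equivalently writing $\na^k G=\na^{k-1}\cdot\na(-\Delta)^{-1}\div(\n\dot u)$ and using Calderón--Zygmund boundedness of the homogeneous Fourier multiplier of order zero $\na\otimes\na(-\Delta)^{-1}$) yields
\be\ba\nonumber
\|\na^k G\|_{L^p}\le C(k,p)\|\na^{k-1}(\n\dot u)\|_{L^p},\qquad
\mu\|\na^k\o\|_{L^p}\le C(k,p)\|\na^{k-1}(\n\dot u)\|_{L^p},
\ea\ee
and summing gives $\eqref{p2}$. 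There is no real obstacle here; the only delicate point is the zero-mean verification ensuring $\Delta^{-1}$ is well-defined on $\T^2$, after which the result is a direct consequence of standard singular integral theory.
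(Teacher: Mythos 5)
Your argument is correct and follows precisely the route the paper implicitly intends: derive the elliptic system $\Delta G=\div(\rho\dot u)$ and $\mu\Delta\omega=\nabla^\perp\cdot(\rho\dot u)$ from the momentum equation, observe that $G$ and $\omega$ have zero mean on $\T^2$, and then invoke $L^p$-boundedness of the second-order Riesz transforms. The paper does not spell out a proof (it states the lemma as a "standard $L^p$-estimate" immediately after displaying \eqref{p1}), so you have supplied exactly the expected details. One small slip: with the paper's convention $\nabla^\perp=(-\partial_2,\partial_1)$, the Hodge-type identity in 2D is $\Delta u=\nabla\div u+\nabla^\perp\omega$, not $\Delta u=\nabla\div u-\nabla^\perp\omega$; the final rewritten form $\rho\dot u=\mu\nabla^\perp\omega+\nu\nabla\div u-\nabla P$ that you actually use (and which matches \eqref{0x13}) is the correct one, so the typo does not affect the rest of the argument.
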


Next, the following div-curl estimate will be frequently used in later
arguments.
\begin{lemma}\la{dc}
Let $k \ge 1$ be a positive integer and $1<p<\infty$.
Then there exists a positive constant $C$ depending only on $k$ and $p$
such that for every $\na u\in W^{k,p}$, it holds that 
\be\ba\la{dc1}
\|\na u\|_{W^{k,p}} \le C\left( \|\div u\|_{W^{k,p}} +\| \o\|_{W^{k,p}} \right).
\ea\ee
\end{lemma}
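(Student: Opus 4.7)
\textbf{Proof proposal for Lemma \ref{dc}.} The plan is to reduce the div-curl estimate to the $L^p$-boundedness of Riesz-type Fourier multipliers on $\T^2$. Since $\na u$, $\div u$, and $\o$ are all unchanged by the shift $u\mapsto u-\ol{u}$, I may assume without loss of generality that $\int u\,dx=0$; note also that $\int\div u\,dx=0$ and $\int\o\,dx=0$ automatically by periodicity, so all three objects live in the mean-zero subspace where $(-\Delta)^{-1}$ is well defined.

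The algebraic identity underlying the whole argument is the 2D formula
\begin{equation*}
\Delta u=\na\div u+\na^{\bot}\o,
\end{equation*}
which one checks componentwise using $\na^{\bot}=(-\pa_2,\pa_1)$ and $\o=\pa_1 u^2-\pa_2 u^1$. Inverting the Laplacian on mean-zero periodic functions, I can write
\begin{equation*}
u=(-\Delta)^{-1}\bigl(-\na\div u-\na^{\bot}\o\bigr).
\end{equation*}
Differentiating once, each component of $\na u$ becomes a sum of zero-order Fourier multipliers of the form $\pa_i(-\Delta)^{-1}\pa_j$ (classical Riesz-transform compositions) acting on $\div u$ and $\o$. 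These operators are bounded on $L^p(\T^2)$ for every $1<p<\infty$ by Calder\'on--Zygmund theory (equivalently, the Mikhlin multiplier theorem on the torus), yielding
\begin{equation*}
\|\na u\|_{L^p}\le C\bigl(\|\div u\|_{L^p}+\|\o\|_{L^p}\bigr),
\end{equation*}
which is the $k=0$ version of the claim.

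For general $k\ge 1$, I would apply $\pa^{\alpha}$ with $|\alpha|\le k$ to the representation formula above; since partial derivatives commute with the Fourier multipliers in play, the identical argument produces
\begin{equation*}
\|\pa^{\alpha}\na u\|_{L^p}\le C\bigl(\|\pa^{\alpha}\div u\|_{L^p}+\|\pa^{\alpha}\o\|_{L^p}\bigr),
\end{equation*}
and summing over $|\alpha|\le k$ gives \eqref{dc1}. The only ``hard'' ingredient is the $L^p$-boundedness of the Riesz transforms on $\T^2$, which I would simply cite as a classical result; everything else is a routine algebraic rearrangement, so there is no genuine obstacle and the lemma may be regarded as standard.
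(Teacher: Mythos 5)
Your argument is correct. The paper states Lemma \ref{dc} without proof, as a standard div-curl estimate; the route you take — the 2D identity $\Delta u=\na\div u+\na^{\bot}\o$, reduction to mean-zero data so that $(-\Delta)^{-1}$ makes sense on $\T^2$, and $L^p$-boundedness of the Riesz-transform compositions $\pa_i(-\Delta)^{-1}\pa_j$ for $1<p<\infty$, commuted with $\pa^\alpha$ to get the higher-order case — is precisely the canonical proof of this fact and is exactly what the authors are implicitly invoking.
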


To estimate $\| \na u\|_{L^{\infty}}$ and $\| \na \n\|_{L^{q}}$ 
we require the following Beale-Kato-Majda type inequality, 
which was established in \cite{K} when $\div u \equiv 0$. 
For further reference, we direct readers to \cite{BKM,HLX1}.
\begin{lemma}\la{bkm}
For $2<q<\infty$, 
there exists a positive constant $C$ 
depending only on $q $ such that for every function $\na u\in W^{1,q}$
\be\ba\la{bkm1}
 \|\na u\|_{L^\infty} \le C \left( \|\div u\|_{L^\infty}+ \|\o\|_{L^\infty} \right)\log \left(e+ \|\na^2 u\|_{L^q} \right)+ C\|\na u\|_{L^2}+C.
\ea\ee
\end{lemma}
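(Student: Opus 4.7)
The plan is a Littlewood--Paley frequency decomposition combined with the div-curl representation already recorded in Lemma \ref{dc}. The obstruction to an unconditional $L^\infty$ bound on $\na u$ is that the Calder\'on--Zygmund operators sending $(\div u, \o)$ to $\na u$ are bounded on $L^p$ for every $1<p<\infty$ but fail to be bounded on $L^\infty$; the logarithm of $\|\na^2 u\|_{L^q}$ is precisely the endpoint defect, and it will be produced by summing the $L^\infty$ contribution of a logarithmically long range of intermediate dyadic blocks.

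Concretely, after subtracting the mean I would split $u = u_L + \sum_{0\le j\le N} u_j + u_H$ via a smooth dyadic partition on $\T^2$, where $u_L$ is supported at frequencies $|\xi|\lesssim 1$, the block $u_j$ at $|\xi|\sim 2^j$, and $u_H$ at $|\xi|\gtrsim 2^N$, with a parameter $N$ to be chosen. Bernstein's inequality together with Poincar\'e yields
\[
\|\na u_L\|_{L^\infty} \le C\|\na u\|_{L^2}.
\]
For the high-frequency tail, Bernstein and the embedding of a single frequency shell into $L^\infty$ give
\[
\|\na u_H\|_{L^\infty} \le C\, 2^{-N(1-2/q)} \|\na^2 u\|_{L^q}.
\]
For each intermediate block, the Fourier multiplier recovering $\na u_j$ from $(\div u_j,\o_j)$ is a homogeneous symbol of degree zero, so its convolution kernel has $L^1(\T^2)$ norm bounded uniformly in $j$ (by scaling of a fixed Schwartz kernel and periodization). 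Hence
\[
\|\na u_j\|_{L^\infty} \le C\bigl(\|\div u\|_{L^\infty} + \|\o\|_{L^\infty}\bigr).
\]
Summing over $0\le j\le N$ costs a factor of $N$, and picking $N\sim \log\bigl(e+\|\na^2 u\|_{L^q}\bigr)$ drives the high-frequency contribution down to $O(1)$ while producing exactly the logarithmic factor multiplying $\|\div u\|_{L^\infty}+\|\o\|_{L^\infty}$.

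The main obstacle is the uniform-in-$j$ control of the intermediate blocks: one has to verify that the kernel realizing $\na u_j$ from $(\div u_j,\o_j)$ has $L^1$ norm independent of the dyadic scale. On $\mathbb{R}^2$ this is a standard Mikhlin-type computation using the self-similarity of dyadic dilations of a fixed Schwartz symbol; on $\T^2$ it transfers via Poisson summation once the scale $2^j\ge 1$, and the finitely many low scales are absorbed by the constant. The additive constants $C\|\na u\|_{L^2}+C$ in \eqref{bkm1} then collect the low-frequency piece together with the contribution of the subtracted mean of $u$, which is controlled by mass conservation and the basic energy bound.
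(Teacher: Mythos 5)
Your Littlewood--Paley proof is correct and is the standard modern argument for Beale--Kato--Majda-type logarithmic inequalities. The paper itself does not prove Lemma~\ref{bkm}; it defers to the references \cite{K,BKM,HLX1}, and your frequency decomposition is exactly the approach taken there (and in textbook treatments such as Bahouri--Chemin--Danchin). The splitting into low/intermediate/high blocks, Bernstein on the tails with exponent $1-2/q>0$, the uniform $L^1$ kernel bound for the degree-zero div-curl multiplier on each dyadic annulus, and the optimization $N\sim\log\bigl(e+\|\na^2 u\|_{L^q}\bigr)$ are all in order, and the resulting constant does depend only on $q$ (through the Bernstein exponent and the geometric-series sum) as the lemma requires.

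One small cosmetic point: the closing appeal to ``mass conservation and the basic energy bound'' is out of place. Lemma~\ref{bkm} is a pure interpolation inequality with no PDE content, and subtracting the mean of $u$ changes neither $\na u$ nor $\na^2 u$, so there is no contribution to account for; the low-frequency piece is already fully handled by the Poincar\'e--Bernstein estimate $\|\na u_L\|_{L^\infty}\le C\|\na u\|_{L^2}$. That sentence should simply be deleted -- it does not affect the validity of the argument but it suggests a dependence on the evolution that the lemma does not have.
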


Moreover, we state the following Poincar\'e's inequality,
 the proof of which 
 can be found in Danchin-Mucha \cite{DM}.
\begin{lemma}\la{Dl8}
Suppose $\n\in L^2$ and $b \in H^1$. Assume that
\be\ba\la{p7}
\int \n b = 0,\quad  M:=\int \n dx > 0.
\ea\ee
Then, there exists a positive generic constant $C$ such that
\be\ba\la{p8}
\|b\|_{L^2}\leq C\log^\frac{1}{2}\left(e 
 + \frac{\|\n-c\|_{L^2}}{M}\right)
 \|\na b\|_{L^2}, \mathrm{\ for\ all\ } c\in\mr.
\ea\ee
\end{lemma}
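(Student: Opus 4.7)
The plan is to reduce the desired inequality to a bound on the mean $\ol b := |\T^2|^{-1}\int b\, dx$ and then extract the logarithmic factor through a H\"older--Gagliardo-Nirenberg optimization. The constraint $\int \n b\,dx = 0$ rewrites as $\ol b \,M = -\int \n (b-\ol b)\,dx$. Combined with the standard Poincar\'e inequality $\|b-\ol b\|_{L^2}\le C\|\na b\|_{L^2}$, the problem reduces to proving that $|\ol b|\le C \log^{1/2}(e+X/M)\|\na b\|_{L^2}$, where $X:=\|\n-c\|_{L^2}$. Moreover, since $\ol\n := M/|\T^2|$ minimizes $c\mapsto \|\n-c\|_{L^2}$ and the logarithm is monotone, it suffices to treat the critical case $c=\ol\n$; for any other $c$ the right-hand side only gets larger.

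Fix any exponent $p>2$ and let $p'=p/(p-1)$. H\"older's inequality together with Lemma \ref{gn1} (and Poincar\'e) yields
\[
|\ol b|\, M \;\le\; \int \n |b-\ol b|\,dx \;\le\; \|\n\|_{L^{p'}}\|b-\ol b\|_{L^p} \;\le\; C\, p^{1/2}\|\n\|_{L^{p'}}\|\na b\|_{L^2}.
\]
The crucial ingredient is the sharp $\sqrt{p}$ factor in the two-dimensional Gagliardo-Nirenberg inequality. Next, log-convexity interpolation gives
\[
\|\n\|_{L^{p'}}\;\le\; \|\n\|_{L^1}^{1-2/p}\|\n\|_{L^2}^{2/p} \;=\; M^{1-2/p}\|\n\|_{L^2}^{2/p},
\]
and the splitting $\n = \ol\n + (\n-\ol\n)$ produces $\|\n\|_{L^2}\le CM+X$. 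Combining these,
\[
|\ol b| \;\le\; C\, p^{1/2}\, (1+X/M)^{2/p}\, \|\na b\|_{L^2}.
\]

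It remains to optimize in $p$. When $X/M$ is bounded, any fixed $p>2$ (say $p=3$) already yields the desired bound, since $\log^{1/2}(e+X/M)\ge 1$ absorbs the constant. When $X/M$ is large, the choice $p=4\log(e+X/M)$ renders $(1+X/M)^{2/p}$ uniformly bounded by $e^{1/2}$ and reduces $p^{1/2}$ to $2\log^{1/2}(e+X/M)$, producing exactly the asserted logarithmic dependence; combining with the Poincar\'e control on $\|b-\ol b\|_{L^2}$ concludes. The principal difficulty is this final exponent optimization, because it is precisely the sharp $\sqrt{p}$ growth of the 2D Gagliardo-Nirenberg constant that converts what would otherwise be a polynomial dependence on $X/M$ into the desired $\sqrt{\log}$; in higher dimensions the same strategy would fail and one would instead need to appeal to a Trudinger--Moser-type embedding.
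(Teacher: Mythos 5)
The paper itself does not reproduce a proof of Lemma \ref{Dl8} --- it is quoted from Danchin--Mucha \cite{DM} --- so there is no internal argument to compare against. Your proof is correct, and it uses exactly the mechanism one expects here: reduce to bounding $\ol b$, dualize the sharp $\sqrt p$-growth of the two-dimensional Gagliardo--Nirenberg constant through H\"older against $\|\n\|_{L^{p'}}$, interpolate that norm between $L^1$ and $L^2$, and then choose $p\sim\log(e+X/M)$ so that $(1+X/M)^{2/p}$ stays bounded while $\sqrt p$ delivers the $\sqrt{\log}$. The reduction to $c=\ol\n$ via the $L^2$-projection property is also handled correctly.

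One hypothesis is used silently and should be made explicit: the identification $\|\n\|_{L^1}^{1-2/p}=M^{1-2/p}$ requires $\n\ge 0$. The statement \eqref{p7} as written only demands $\int\n\,dx>0$, but without nonnegativity the lemma is in fact false. For instance, on $\T^2$ take $\n = 1+N\sin(2\pi x_1)$ and $b = 1-\tfrac{2}{N}\sin(2\pi x_1)$: then $\int\n b\,dx=0$, $M=1$, $\|\n-\ol\n\|_{L^2}\sim N$, yet $\|b\|_{L^2}\to 1$ while $\log^{1/2}(e+N)\,\|\na b\|_{L^2}\sim N^{-1}\log^{1/2}N\to 0$ as $N\to\infty$, contradicting \eqref{p8}. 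So $\n\ge 0$ is genuinely needed (and is of course satisfied in every application in the paper, where $\n$ is a density), and your chain $\|\n\|_{L^1}=M$ is precisely where it enters; note that the weaker bound $\|\n\|_{L^1}\le M+\|\n-\ol\n\|_{L^2}$ available without a sign condition would only give $(1+X/M)$ to the first power and ruin the optimization. Apart from spelling out this hypothesis, the argument is complete and correct.
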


Next, the following Zlotnik inequality, 
which plays an important role in obtaining the uniform upper (in time) bound of $\n$, can be found in \cite{ZAA}.
\begin{lemma}\la{zli}
Suppose that the function $y(t)$ is defined on $[0,T]$ and satisfies
\be\ba\nonumber
y'(t)= g(y)+h'(t) \mbox{ on  } [0,T] ,\quad y(0)=y_0, 
\ea\ee
with $ g\in C(\mathbb{R})$ and $y,h\in W^{1,1}(0,T).$ If $g(\infty)=-\infty$
and 
\be\ba\nonumber
h(t_2)-h(t_1) \le N_0 +N_1(t_2-t_1),
\ea\ee
for all $0 \le t_1<t_2\le T$
with some $N_0\ge 0$ and $N_1\ge 0$, then
\be\ba\nonumber
y(t)\le \max\left\{y_0,\overline{\zeta} \right\}+N_0<\infty
\mbox{ on } [0,T],
\ea\ee
where $\overline{\zeta}$ is a constant such
that 
\be\ba\nonumber
g(\zeta)\le -N_1 \quad\mbox{ for }\quad \zeta\ge \overline{\zeta}.
\ea\ee
\end{lemma}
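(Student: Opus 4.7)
The plan is to establish the pointwise bound $y(t)\le M:=\max\{y_0,\overline{\zeta}\}+N_0$ by a direct case analysis based on whether the trajectory $y$ ever enters the sub-level set $\{y\le\overline{\zeta}\}$ on $[0,t]$. The key observation is that on any interval where $y(s)\ge\overline{\zeta}$, the defining property of $\overline{\zeta}$ gives $g(y(s))\le -N_1$, so the ODE yields $y'(s)\le -N_1+h'(s)$; integrating and combining with the affine increment bound on $h$ absorbs the linear-in-time term and leaves only the constant $N_0$.

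Concretely, fix $t\in(0,T]$ and set $s^*:=\sup\{s\in[0,t]:y(s)\le\overline{\zeta}\}$, treating the case where this set is empty separately. In the empty-set case, $y(s)>\overline{\zeta}$ throughout $[0,t]$, which forces $y_0>\overline{\zeta}$ and hence $\max\{y_0,\overline{\zeta}\}=y_0$; integrating $y'=g(y)+h'$ from $0$ to $t$ yields
\[
y(t)-y_0=\int_0^t g(y(s))\,ds+h(t)-h(0)\le -N_1 t+N_0+N_1 t=N_0,
\]
so $y(t)\le y_0+N_0=M$. In the non-empty case, either $s^*=t$, giving $y(t)\le\overline{\zeta}\le M$ by continuity, or $s^*<t$, in which case absolute continuity of $y\in W^{1,1}$ forces $y(s^*)=\overline{\zeta}$ together with $y(s)>\overline{\zeta}$ on $(s^*,t]$; integrating from $s^*$ to $t$ instead gives $y(t)-\overline{\zeta}\le N_0$, so again $y(t)\le M$.

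The only technical subtleties are ensuring that $\overline{\zeta}$ is well-defined (guaranteed by $g\in C(\mathbb{R})$ together with $g(\infty)=-\infty$, so that the set $\{\zeta:g(\zeta)\le -N_1\}$ contains a half-line $[\overline{\zeta},\infty)$), and invoking the absolute continuity of $y$ to identify the limit value $y(s^*)=\overline{\zeta}$. There is no genuinely hard step here; the whole argument is an elementary comparison for an ODE with a dissipative nonlinearity $g$ and a forcing $h'$ whose cumulative growth is controlled by $N_0+N_1(t_2-t_1)$. I would expect the full write-up to take only a handful of lines once the two cases are in place.
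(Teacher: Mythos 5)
The paper offers no proof of this lemma---it is imported verbatim from Zlotnik \cite{ZAA}---so there is nothing in the text to compare against. Your argument is correct and is the standard one for this inequality: on any interval where $y\ge\overline\zeta$, the dissipation $g(y)\le -N_1$ exactly cancels the linear part $N_1(t_2-t_1)$ of the increment bound on $h$, leaving only $N_0$, and the last-entry time $s^*:=\sup\{s\in[0,t]:y(s)\le\overline\zeta\}$ cleanly splits the analysis into your three cases. One step worth spelling out is the identification $y(s^*)=\overline\zeta$ in the sub-case $s^*<t$: the set $\{s\in[0,t]:y(s)\le\overline\zeta\}$ is closed because $y\in W^{1,1}(0,T)\hookrightarrow C[0,T]$, so $s^*$ is attained with $y(s^*)\le\overline\zeta$, while $y(s)>\overline\zeta$ for $s\in(s^*,t]$ together with continuity from the right gives $y(s^*)\ge\overline\zeta$. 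Note it is ordinary continuity, not absolute continuity, that drives this identification; absolute continuity is what legitimizes integrating $y'=g(y)+h'$ over $[s^*,t]$ (and over $[0,t]$ in the empty-set case). Both are available here, and the finiteness of $\overline\zeta$ is guaranteed by $g\in C(\mathbb{R})$ with $g(\infty)=-\infty$, so the argument is complete.
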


\section{A Priori Estimates \uppercase\expandafter{\romannumeral1}: Upper Bound of $\n$}
In this section, we always assume that $(\n,u)$ is the classical solution of (\ref{ns})--(\ref{i3}) on  $\T^2 \times (0,T]$, and additionally assume that
\be\la{mdsj}\ba
0\leq \n(x,t)\leq \rs:=2\tilde{\n} \quad \mathrm{for\ all\ } (x,t)\in \T^2 \times[0,T],
\ea\ee
where $\tilde{\n}$ is defined in Theorem \ref{th0}.

Moreover, we set
\be\la{dfa1}\ba
A_1^2(t) \triangleq \int \mu \o^2(t)+\frac{G^2(t)}{2\mu+\lambda} dx,
\ea\ee
and
\be\la{dfa2}\ba
A_2^2(t)\triangleq\int \rho(t)|\dot{u}(t)|^2dx.
\ea\ee

We first state the standard energy estimate.

\begin{lemma}\la{l1}
Suppose that $(\n,u)$ is a smooth solution to \eqref{ns}--\eqref{i3}
on  $\mathbb{T}^2 \times (0,T]$, then the following holds:
\be\ba\la{0x1}
\sup_{0\leq t\leq T}\left( \int \frac{1}{2}\rho |u|^2 + \frac{1}{\ga-1} \n^\ga dx \right)
+ \int_0^T  \int \nu (\div u)^2 + \mu \o^2 dxdt 
\leq
E_0,
\ea\ee
where $E_0$ is defined by \eqref{e0}.
\end{lemma}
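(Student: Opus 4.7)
The plan is to derive the standard energy identity for \eqref{ns} by taking the $L^2$ inner product of the momentum equation with $u$, and then combining it with the continuity equation to convert everything into time derivatives of the kinetic and internal energy plus a non-negative dissipation.

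First I would compute the kinetic-energy time derivative. Writing $\int (\rho u)_t\cdot u\,dx = \int \rho_t|u|^2\,dx + \int \rho u\cdot u_t\,dx$ and integrating $\int\div(\rho u\otimes u)\cdot u\,dx = -\int \rho u\cdot\nabla(|u|^2/2)\,dx = \frac{1}{2}\int\div(\rho u)\,|u|^2\,dx = -\frac{1}{2}\int\rho_t|u|^2\,dx$ by the continuity equation, the two terms combine into $\frac{d}{dt}\int \frac{1}{2}\rho|u|^2\,dx$. Next, for the pressure term, multiplying the continuity equation by $\frac{\ga}{\ga-1}\rho^{\ga-1}$ and integrating yields $\int \rho^\ga\,\div u\,dx = -\frac{1}{\ga-1}\frac{d}{dt}\int \rho^\ga\,dx$, so that $\int \na P\cdot u\,dx = -\int P\,\div u\,dx = \frac{d}{dt}\int \frac{1}{\ga-1}\rho^\ga\,dx$.

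For the viscous terms, integration by parts on $\T^2$ (no boundary terms) gives $-\mu\int\Delta u\cdot u\,dx = \mu\int|\na u|^2\,dx$ and $-(\mu+\lm)\int\na\div u\cdot u\,dx = (\mu+\lm)\int(\div u)^2\,dx$. The key identity I would use next is that on the two-dimensional torus
\begin{equation*}
\int |\na u|^2\,dx = \int \o^2\,dx + \int (\div u)^2\,dx,
\end{equation*}
which follows from the pointwise relation $|\na u|^2 = \o^2 + (\div u)^2 - 2\det(\na u)$ together with the vanishing of $\int\det(\na u)\,dx$ (a consequence of integrating by parts twice to exchange the roles of the two partial derivatives in $\int \pa_1 u^1\pa_2 u^2\,dx$ and $\int \pa_2 u^1\pa_1 u^2\,dx$). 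Adding the two viscous contributions therefore produces $\mu\int\o^2\,dx + \nu\int(\div u)^2\,dx$ in view of \eqref{nu}.

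Collecting everything yields the differential energy identity
\begin{equation*}
\frac{d}{dt}\int\Bigl(\tfrac{1}{2}\rho|u|^2 + \tfrac{1}{\ga-1}\rho^\ga\Bigr)\,dx + \int\bigl(\nu(\div u)^2 + \mu\o^2\bigr)\,dx = 0,
\end{equation*}
and integrating from $0$ to $t$ and recalling \eqref{e0} gives \eqref{0x1}. There is no real obstacle here; the only mild subtlety is the cancellation $\int\det(\na u)\,dx = 0$ that converts the full gradient into the curl-plus-divergence form, and the careful use of the continuity equation to rewrite the convective term as a time derivative. Since $(\n,u)$ is assumed smooth on $\T^2\times(0,T]$, all integrations by parts are legitimate and no boundary contributions appear.
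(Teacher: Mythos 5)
Your proof is correct and coincides with the paper's (terse) argument: multiply the momentum equation by $u$, integrate by parts, and use the continuity equation. You correctly identify the one small but essential point, namely the two-dimensional identity $\int|\na u|^2\,dx=\int\o^2\,dx+\int(\div u)^2\,dx$ via the vanishing of $\int\det(\na u)\,dx$ on $\T^2$, which turns $\mu|\na u|^2+(\mu+\lm)(\div u)^2$ into $\mu\o^2+\nu(\div u)^2$ as in \eqref{0x1}.
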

\begin{proof}
Multiplying $(\ref{ns})_2$ by $u$ and integrating the resulting equation over $\T^2$ by parts,
after using $(\ref{ns})_1$ we derive (\ref{0x1}).
\end{proof}

\begin{lemma}\la{l3}
There exists a positive constant $C$ depending only on $\mu,\ \ga,\ E_0,\ \ol{\n_0}$ 
and $\rs$, such that
\be\la{0xx2}\ba
\int_0^T \int (P-\ol{P})^2 dx dt \le C \nu.
\ea\ee
\end{lemma}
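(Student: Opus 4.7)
The plan is to test the momentum equation $(\ref{ns})_2$ against the function $\phi := \nabla \Delta^{-1}(P - \overline{P})$ on $\T^2$, which satisfies $\div \phi = P - \overline{P}$ and $\|\phi\|_{L^2} + \|\nabla \phi\|_{L^2} \le C \|P - \overline{P}\|_{L^2}$ by elliptic regularity. Since $\int \nabla P \cdot \phi \, dx = -\int P (P - \overline{P}) \, dx = -\|P - \overline{P}\|_{L^2}^2$, integration by parts in space gives pointwise in $t$
\bnn
\|P - \overline{P}\|_{L^2}^2 &=& \int (\rho u)_t \cdot \phi \, dx - \int \rho u \otimes u : \nabla \phi \, dx \\
&& + \mu \int \nabla u : \nabla \phi \, dx + (\mu + \lambda) \int (P - \overline{P}) \div u \, dx.
\enn
Integrating over $[0,T]$ and shifting $\partial_t$ off $\rho u$, the boundary term $[\int \rho u \cdot \phi \, dx]_0^T$ is uniformly bounded via $\|\rho u\|_{L^2}^2 \le \rs \int \rho|u|^2 \le 2\rs E_0$ and $\|\phi\|_{L^2} \le C$. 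To treat $\int \rho u \cdot \phi_t \, dx$, the renormalized continuity equation $P_t + \div(Pu) + (\gamma - 1) P \div u = 0$ together with $\overline{P}_t = -(\gamma - 1)\overline{P\div u}$ gives $\div \phi_t = -\div(Pu) - (\gamma - 1)(P\div u - \overline{P\div u})$, so one may decompose $\phi_t = -Pu + \psi$ with $\|\psi\|_{L^2} \le C\|\div u\|_{L^2}$ by a further Bogovskii/elliptic estimate; in particular $-\int \rho u \cdot \phi_t \, dx = \int \rho P |u|^2 \, dx - \int \rho u \cdot \psi \, dx$.

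The decisive ingredient that makes every contribution $T$-uniform is Lemma \ref{Dl8} applied componentwise to $u$: the momentum conservation $\int \rho u \, dx = 0$ from (\ref{asm}), with $M = \int \rho \, dx = 1$ and $\|\rho\|_{L^2} \le \rs$, yields $\|u\|_{L^2} \le C(\rs)\|\nabla u\|_{L^2}$, whence Lemma \ref{gn1} gives $\|u\|_{L^4}^2 \le C\|\nabla u\|_{L^2}^2$ and also $\|\rho u\|_{L^2}^2 \le C\|\nabla u\|_{L^2}^2$. Combining (\ref{0x1}) with Lemma \ref{dc} provides $\int_0^T \|\nabla u\|_{L^2}^2 \, dt \le CE_0/\mu$ and $\int_0^T \|\div u\|_{L^2}^2 \, dt \le E_0/\nu$. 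Hence
\bnn
\int_0^T \int \rho P |u|^2 \, dx dt \le C E_0/\mu, \qquad \left|\int_0^T \int \rho u \cdot \psi \, dx dt\right| \le CE_0/\sqrt{\mu\nu},
\enn
and the convective term is
\bnn
\left|\int_0^T \int \rho u \otimes u : \nabla \phi \, dx dt \right| \le C \sup_t \|P - \overline{P}\|_{L^2} \int_0^T \|\nabla u\|_{L^2}^2 \, dt \le CE_0/\mu.
\enn
The last two pieces $\mu \int\int \nabla u : \nabla \phi$ and $(\mu + \lambda)\int\int (P - \overline{P}) \div u$ are handled by Cauchy--Schwarz and Young's inequality, each contributing at most $\varepsilon \int_0^T \|P - \overline{P}\|_{L^2}^2 \, dt + C\nu E_0/\varepsilon$. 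Choosing $\varepsilon$ small and absorbing into the left-hand side yields $\int_0^T \|P - \overline{P}\|_{L^2}^2 \, dt \le C\nu$.

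The principal obstacle is precisely the $T$-uniformity of the bound: relying only on the naive pointwise energy bound $\int \rho|u|^2 \le 2E_0$ would inflate both $\int_0^T \int \rho P |u|^2$ and $\int_0^T \|u\|_{L^4}^2 \, dt$ to grow linearly in $T$, which would destroy the claim. Lemma \ref{Dl8} is the tool that converts the conserved vanishing $\int \rho u \, dx = 0$ into a genuine $L^2$--Poincar\'e estimate for $u$ itself, so that $\rho|u|^2$-type integrals can be traded for $\int_0^T \|\nabla u\|_{L^2}^2 \, dt$ and absorbed by the dissipation side of the energy inequality.
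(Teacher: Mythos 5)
Your approach is the same in spirit as the paper's but genuinely different in execution: you test the momentum equation against $\phi = \nabla\Delta^{-1}(P-\overline P)$, which delivers $\|P-\overline P\|_{L^2}^2$ directly, while the paper tests against $\nabla\Delta^{-1}(\rho-\overline\rho)$, obtains $\int(P-P(\overline\rho))(\rho-\overline\rho)\,dx$, and then passes through the auxiliary functional $B(\rho,\overline\rho)$ of \eqref{ppp31}--\eqref{ppp32} to get back to $\|P-\overline P\|_{L^2}^2$. Your route avoids that $B$-machinery, at the cost of having to invoke the renormalized equation $P_t+\div(Pu)+(\gamma-1)P\div u=0$ to handle $\phi_t$, whereas the paper only needs $\rho_t=-\div(\rho u)$. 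You have also correctly identified the crucial ingredient for $T$-uniformity: Lemma \ref{Dl8} (plus the uniform density bound so that the logarithm is a constant) converts $\|\rho u\|_{L^2}$ and $\|u\|_{L^4}$ into $\|\nabla u\|_{L^2}$, which is absorbed by the energy dissipation; the paper uses \eqref{p8}, \eqref{dc1} and \eqref{gn11} in exactly this way in passing from the second to the third line of \eqref{ppp33}.

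However, there is a genuine error in your treatment of $\phi_t$. You claim $\phi_t=-Pu+\psi$ with $\|\psi\|_{L^2}\le C\|\div u\|_{L^2}$ and deduce $-\int\rho u\cdot\phi_t\,dx=\int\rho P|u|^2\,dx-\int\rho u\cdot\psi\,dx$. This cannot hold: $\phi_t=\nabla\Delta^{-1}(P_t-\overline{P_t})$ is a gradient field, while $Pu$ is not, so $\psi=\phi_t+Pu$ necessarily contains the full divergence-free (Leray) part $\mathcal{P}(Pu)$ of $Pu$. That part obeys $\|\mathcal{P}(Pu)\|_{L^2}\lesssim\|Pu\|_{L^2}\lesssim\|u\|_{L^2}$, not $\lesssim\|\div u\|_{L^2}$, and it does not vanish even when $\div u\equiv0$. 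The correct decomposition is
\[
\phi_t=-\nabla\Delta^{-1}\div(Pu)+\psi,\qquad
\psi:=-(\gamma-1)\nabla\Delta^{-1}\bigl(P\div u-\overline{P\div u}\bigr),
\]
for which $\|\psi\|_{L^2}\le C\|\div u\|_{L^2}$ does hold, and then $-\int\rho u\cdot\phi_t\,dx=\int\rho u\cdot\nabla\Delta^{-1}\div(Pu)\,dx-\int\rho u\cdot\psi\,dx$. The error is non-fatal: the replacement term is bounded by $\|\rho u\|_{L^2}\,\|\nabla\Delta^{-1}\div(Pu)\|_{L^2}\le C\|\rho u\|_{L^2}\|Pu\|_{L^2}\le C\|\nabla u\|_{L^2}^2$, exactly as your $\int\rho P|u|^2$ would have been, so the final bound $\int_0^T\|P-\overline P\|_{L^2}^2\,dt\le C\nu$ survives. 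But your intermediate identity is wrong as written and should be repaired as above.
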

\begin{proof}
First, we set 
\be\la{ppp31}\ba
B(\n,\ol{\n})=\n \int^\n_{\ol{\n}} \frac{P(s)-P(\ol{\n})}{s^2} ds.
\ea\ee
According to (\ref{conv}), there exist two positive constants $M_1$ and $M_2$ 
both depending only on $\ga, \ \ol{\n_0}$ and $\rs$ such that 
\be\la{ppp32}\ba
M_1(\n-\ol{\n})^2 \le M_2 B(\n,\ol{\n})  \le (\n^\ga -\ol{\n}^\ga)( \n-\ol{\n}).
\ea\ee

Next, multiplying $(\ref{ns})_2$ by $-\na (-\Delta)^{-1} (\n -\ol{\n})$ and
integrating the resulting equation over $\T^2$, we can obtain
\be\la{ppp33}\ba
& \int(P-P(\ol{\n}))(\n-\ol{\n})dx \\
&= \left(-\int \n u \cdot \na (-\Delta)^{-1} (\n -\ol{\n}) dx\right)_t 
+ \int \rho u\cdot \na (-\Delta)^{-1} (\n_t) dx 
+ (\mu+\lambda)\int(\n-\ol{\n})\div udx \\
&\quad -\mu \int \p_j u_i \p_j \p_i (-\Delta)^{-1} (\n -\ol{\n}) dx
+ \int \n u_i u_j \p_i \p_j (-\Delta)^{-1} (\n -\ol{\n}) dx \\
& \le \left(-\int \n u \cdot \na (-\Delta)^{-1} (\n -\ol{\n}) dx\right)_t + C\|\n u\|_{L^2}^2 +\nu \| \div u\|_{L^2} \| \n -\ol{\n} \|_{L^2} \\
&\quad +C\|\na u\|_{L^2} \| \n -\ol{\n} \|_{L^2}  +\| \n \|_{L^4} \| u\|^2_{L^4} \| \n -\ol{\n} \|_{L^4} \\
& \le \left(-\int \n u \cdot \na (-\Delta)^{-1} (\n -\ol{\n}) dx\right)_t 
+\varepsilon \| \n -\ol{\n} \|^2_{L^2} + C(\varepsilon) \left( \| \o \|^2_{L^2} + \nu ^2 \| \div u\|^2_{L^2} \right),
\ea\ee
where we have used $(\ref{ns})_1$, (\ref{p8}), (\ref{dc1}), (\ref{gn11}) and H\"older's inequality.

Combining (\ref{ppp32}) and (\ref{ppp33}) and taking $\varepsilon$ suitably small implies
\be\la{ppp34}\ba
M_2 \int B(\n,\ol{\n})dx
&\le \int(\rho^\gamma-\bar{\rho}^\gamma)( \rho - \bar{\rho})dx \\
&\leq -2\left(\int \n u \cdot \na (-\Delta)^{-1} (\n -\ol{\n}) dx \right)_t 
+\widetilde{C} \left(\| \o \|^2_{L^2} + \nu ^2 \| \div u\|^2_{L^2} \right),
\ea\ee
where $\widetilde{C}$ depends only on $\mu,\ \ga,\ \ol{\n_0}$ and $\rs$.

Integrating (\ref{ppp34}) over $(0,T)$ and using (\ref{0x1}) yields
\be\la{ppp35}\ba
\int_0^T \int B(\n,\ol{\n})dxdt \le C \nu,
\ea\ee
where we have used the following estimate
\be\la{ppp36}\ba
\left| \int \n u \cdot \na (-\Delta)^{-1} (\n -\ol{\n}) dx \right|
\le \| \n u\|_{L^2} \| \na (-\Delta)^{-1} (\n -\ol{\n}) \|_{L^2}
\le C \| \n -\ol{\n} \|_{L^2} 
\le C.
\ea\ee
Moreover, based on the definition of $B(\n,\ol{\n})$, we can derive
\be\la{ppp37}\ba
\|P-\ol{P}\|_{L^2}^2 \le \|P-P(\ol{\n})\|_{L^2}^2 \le C \int B(\n,\ol{\n}) dx,
\ea\ee
which together with (\ref{ppp35}) gives (\ref{0xx2}).
\end{proof}

\begin{lemma}\la{l4}
There exists a positive constant $C$ depending only on $\ga,\ \mu,\ \rs,\ \ol{\n_0}$ 
and $E_0$, such that
\be\la{x1000}\ba
\sup_{0\le t\le T} A^2_1 + \int_0^T A^2_2 dt 
\le C \left(1+\nu \| \div u_0 \|^2_{L^2}+ \| \na^{\bot} \cdot u_0 \|^2_{L^2} \right),
\ea\ee
and
\be\la{0x10}\ba
\sup_{0\le t\le T} \si A^2_1 + \int_0^T \si A^2_2 dt \le C,
\ea\ee
with
\be\nonumber
\si(t):=\min\{1,t\}.
\ee
\end{lemma}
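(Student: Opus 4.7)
The strategy is to test the momentum equation against the material derivative $\dot u = u_t + u\cdot\nabla u$. Writing $\mu\Delta u + (\mu+\lambda)\nabla\div u = \nu\nabla\div u + \mu\nabla^\perp\omega$ so that $\rho\dot u = \nabla G + \mu\nabla^\perp\omega$, integrating by parts, and using the continuity-equation consequence $\dot P = -\gamma P\div u$ together with $\overline{G}=0$, one obtains after standard manipulations an identity of the form
\begin{equation*}
\frac{1}{2}\frac{d}{dt}\!\left(A_1^2 - \nu^{-1}\|P-\overline{P}\|_{L^2}^2\right) + A_2^2 = R(t),
\end{equation*}
where $R(t)$ collects cubic-in-$\nabla u$ and pressure-weighted quadratic terms (such as $\nu(\div u)^3$, $\omega^2\div u$, $P(\div u)^2$, and $\div u\,\det\nabla u$) arising from the contribution of the convective part $u\cdot\nabla u$ inside $\dot u$.

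The heart of the argument is to control $R(t)$ cleanly, following the remark in the introduction about decomposing $\rho\dot u$. For each cubic term, I invoke the decomposition $\nu\div u = G + (P-\overline{P})$: for example,
\begin{equation*}
\left|\nu\!\int(\div u)^3 dx\right| = \frac{1}{\nu^2}\left|\int(G+P-\overline{P})^3 dx\right| \le \frac{C}{\nu^2}\bigl(\|G\|_{L^2}^2\|\nabla G\|_{L^2}+1\bigr),
\end{equation*}
by the Gagliardo--Nirenberg estimate $\|G\|_{L^3}^3\le C\|G\|_{L^2}^2\|\nabla G\|_{L^2}$ (Lemma \ref{gn1}, with $\overline{G}=0$), $\|P-\overline{P}\|_{L^\infty}\le C$ from (\ref{mdsj}), $\|G\|_{L^2}^2\le\nu A_1^2$ from the definition of $A_1$, and $\|\nabla G\|_{L^2}\le CA_2$ from Lemma \ref{estg}. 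Young's inequality converts this to $\tfrac18 A_2^2 + C\nu^{-2}A_1^4 + C\nu^{-2}$. Analogous manipulations (using also $\|\omega\|_{L^2}^2\le A_1^2/\mu$ and $\|\nabla\omega\|_{L^2}\le CA_2$) dispose of the $\omega^2\div u$ and pressure-weighted terms, yielding
\begin{equation*}
R(t) \le \tfrac{1}{2}A_2^2 + C\nu^{-2}A_1^4 + C\bigl(1 + A_1^2\bigr).
\end{equation*}

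To obtain (\ref{x1000}), I absorb $\tfrac12 A_2^2$ into the left-hand side and integrate in $t$. Gronwall's inequality closes the estimate because $\int_0^T A_1^2\,dt$ is uniformly controlled via Lemmas \ref{l1} and \ref{l3}, and $\nu$ is large enough to tame the $\nu^{-2}A_1^4$ coefficient. Expanding $A_1^2(0) = \mu\|\omega_0\|_{L^2}^2 + \nu^{-1}\|G_0\|_{L^2}^2$ gives $A_1^2(0)\le C(1 + \|\nabla^\perp\!\cdot u_0\|_{L^2}^2 + \nu\|\div u_0\|_{L^2}^2)$, producing the right-hand side of (\ref{x1000}). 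For (\ref{0x10}), I multiply the identity by $\sigma(t)=\min\{1,t\}$; the resulting term $\sigma' A_1^2$ is supported on $[0,\min\{1,T\}]$, and there $\int_0^{\min\{1,T\}}\!A_1^2\,dt\le C$ \emph{uniformly in $\nu$} thanks to Lemmas \ref{l1} and \ref{l3} (which control $\int_0^1(\nu\|\div u\|_{L^2}^2 + \mu\|\omega\|_{L^2}^2 + \nu^{-1}\|P-\overline{P}\|_{L^2}^2)\,dt$). Since $\sigma A_1^2$ vanishes at $t=0$, the initial-data contribution drops out and (\ref{0x10}) follows with $C$ independent of $\nu$.

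The main obstacle is the cubic term $\nu\int(\div u)^3$ and its relatives like $\nu\int \div u\,\det\nabla u$. A direct bound via $\|\div u\|_{L^\infty}$ would invoke a logarithmic interpolation inequality and force the smallness assumption on $\|\div u_0\|_{L^2}$ used in \cite{DM}, which would in turn prevent the time-layer trick because $\sigma' A_1^2$ integrated over $[0,\min\{1,T\}]$ would no longer be $\nu$-uniform. The device exploited here is to use $\nu\div u = G + (P-\overline{P})$ to move the large factor $\nu$ \emph{into} the integral and expose the bound $\|G\|_{L^2}^2\le \nu A_1^2$; this trades the logarithmic loss for a gain of one power of $\nu^{-1}$, allowing Gronwall to close for large $\nu$ without any size condition on the initial divergence.
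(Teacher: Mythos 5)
Your overall scheme is the paper's: test the momentum equation against $\dot u$, identify $A_1^2$ as the natural energy, substitute $\nu\div u = G + (P-\overline{P})$ to trade powers of $\nu$ for $G$, close with Gr\"onwall (using $\int_0^T A_1^2\,dt\le C$ from Lemmas \ref{l1}, \ref{l3}), and multiply by $\sigma$ to kill the $\nu$-dependent initial datum. That much is right, and the accounting for $A_1^2(0)$ is correct.

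There is, however, a genuine gap in the control of $R(t)$, and it is precisely at the term you wave away as a ``relative.'' After the substitution $\nu\div u = G + (P-\overline{P})$, the cubic term $\nu\int\div u\,\det\nabla u\,dx$ (equivalently $\int G\,\nabla u^1\cdot\nabla^{\perp}u^2\,dx$, this is $I_2$ in (\ref{0x14})) is \emph{not} analogous to $\nu\int(\div u)^3$ or to $\int\omega^2\div u$. For those two terms the Gagliardo--Nirenberg device works because every factor in the integrand is $G$, $P-\overline{P}$, or $\omega$, and Lemma \ref{estg} gives $\|\nabla G\|_{L^2}+\|\nabla\omega\|_{L^2}\le CA_2$. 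But $\det\nabla u$ is a genuinely full-gradient object: any H\"older/GN scheme for $\int G\,\det\nabla u\,dx$ forces a factor like $\|\nabla u\|_{L^4}^2\lesssim\|\nabla u\|_{L^2}\|\nabla^2 u\|_{L^2}$, and then $\|\nabla^2 u\|_{L^2}\le C(\|\nabla\div u\|_{L^2}+\|\nabla\omega\|_{L^2})$ drags in $\nu^{-1}\|\nabla P\|_{L^2}$ --- a quantity that is simply not available at this stage of the argument (the $W^{1,q}$ bound on $\rho$ comes much later and is $\nu$- and $T$-dependent). Integrating by parts via $\det\nabla u=\partial_1(u^1\partial_2u^2)-\partial_2(u^1\partial_1u^2)$ does not help for the same reason. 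The paper closes this term with the compensated-compactness input: the div-curl structure (\ref{0x16}) puts $\nabla u^1\cdot\nabla^{\perp}u^2$ in the Hardy space $\mathcal{H}^1$ with $\|\nabla u^1\cdot\nabla^{\perp}u^2\|_{\mathcal{H}^1}\le C\|\nabla u\|_{L^2}^2$ (\cite{CLMS}), and then $\mathcal{H}^1$--$\mathcal{BMO}$ duality with $\|G\|_{\mathcal{BMO}}\le C\|\nabla G\|_{L^2}\le CA_2$ gives $|I_2|\le \tfrac18\|\sqrt{\rho}\dot u\|^2_{L^2}+C\|\nabla u\|^4_{L^2}$, cf.\ (\ref{0x17})--(\ref{0x18}). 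Your proposal never mentions this mechanism; the substitution $\nu\div u = G + (P-\overline{P})$ alone does not make the $\det\nabla u$ term tractable. (A minor further inaccuracy: your claimed bound $R(t)\le\frac12 A_2^2+C\nu^{-2}A_1^4+C(1+A_1^2)$ is too optimistic in the power of $\nu$; the $\omega^2\div u$ and $\det\nabla u$ pieces produce an $A_1^4$-type term with coefficient $C$, not $C\nu^{-2}$, exactly as in (\ref{0x21}). Gr\"onwall still closes because $\int_0^T A_1^2\,dt\le C$, so the conclusion is unaffected, but the reasoning that ``$\nu$ large tames the coefficient'' is not the point.)
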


\begin{proof}
First, direct calculations show that 	
\be\la{0x11}\ba
\na^{\bot}\cdot \dot u= \frac{D}{Dt}\o +(\p_1u\cdot\na) u^2
-(\p_2u\cdot\na)u^1
 = \frac{D}{Dt}\o + \o \div u , 
\ea\ee
and that
\be\la{0x12}\ba 
\div  \dot u&=\frac{D}{Dt}\div u +(\p_1u\cdot\na) u^1
+(\p_2u\cdot\na)u^2\\&
=\frac{1}{\nu} \frac{D}{Dt}G
+ \frac{1}{\nu} \frac{D}{Dt}(P-\ol{P})
 + 2\nabla u^1\cdot\nabla^{\perp}u^2 + (\div u)^2.	
\ea\ee

Then, we rewrite $(\ref{ns})_2$ as 
\be\la{0x13}\ba
\n\dot{u} = \na G + \mu\na^{\bot}\o.
\ea\ee
Multiplying both sides of $(\ref{ns})_2$ by $2 \dot u$ and then integrating
the resulting equality over $\T^2$ leads to
\be\la{0x14}\ba
& \frac{\rm d}{{\rm d}t} \int \left(\mu \o^2 + \frac{G^2}{\nu}\right)dx + 2\| \sqrt{\n} \dot{u}\|^2_{L^2}\\
& = -\mu \int \o^2\div udx - 
4\int G\nabla u^1
\cdot\nabla^{\perp}u^2dx- 2\int G(\div u)^2dx\\
&\quad +\frac{1}{\nu} \int G^2\div udx +\frac{2\ga}{\nu} \int P G\div udx= \sum_{i=1}^5I_i,
\ea\ee
where we have used (\ref{0x11}) and (\ref{0x12}). Next, we estimate each $I_i$ as follows:

First, combining (\ref{gn11}), (\ref{p2}) and H\"older's inequality leads to
\be\la{0x15}\ba
|I_1| &\le C \| \o\|^2_{L^4} \| \div u\|_{L^2} \\
& \le C \| \o\|_{L^2} \| \na \o\|_{L^2} \| \div u\|_{L^2} \\
& \le C \| \sqrt{\n} \dot{u}\|_{L^2} \| \o\|_{L^2} \| \div u\|_{L^2} \\
& \le \frac{1}{8} \| \sqrt{\n} \dot{u}\|^2_{L^2}+C A^2_1 \| \div u\|^2_{L^2}.
\ea\ee
Next, we will use the idea presented in \cite{HL2} to estimate $I_2$. 
Observing that 
\be\la{0x16}\ba
\na^{\bot} \cdot \na u^1=0,
\quad \div \na^{\bot}u^2=0,
\ea\ee
we can infer from \cite[Theorem II.1]{CLMS} that
\be\la{0x17}\ba
\|\na u^1
\cdot \na^{\bot}u^2\|_{\mathcal{H}^1}\le C\|\na u\|_{L^2}^2.
\ea\ee
Based on the fact that $\mathcal{BMO}$ is the dual space of $\mathcal{H}^1$ (see \cite{FC}), we obtain
\be\la{0x18}\ba
|I_2| &\le C \| G\|_{\mathcal{BMO}} 
\|\na u^1\cdot \na^{\bot}u^2\|_{\mathcal{H}^1} \\
& \le C \| \na G\|_{L^2} \| \na u\|^2_{L^2} \\
& \le C \| \sqrt{\n} \dot{u}\|_{L^2} \| \na u\|^2_{L^2} \\
& \le \frac{1}{8} \| \sqrt{\n} \dot{u}\|^2_{L^2}+ C \| \na u\|^4_{L^2}.
\ea\ee

Then, it follows from (\ref{gn11}), (\ref{p2}), (\ref{gw}) and H\"older's inequality that
\be\la{0x20}\ba
\sum_{i=3}^5I_i &\le \frac{C}{\nu} \int G^2 |\div u|dx+\frac{C}{\nu} \int (P+\ol{P})|G| |\div u|dx \\
&\le \frac{C}{\nu} \| G\|^2_{L^4} \| \div u\|_{L^2}+\frac{C}{\nu} \| G\|_{L^2} \| \div u\|_{L^2} \\
& \le \frac{C}{\nu} \| G\|_{L^2} \| \na G\|_{L^2} \| \div u\|_{L^2}+\frac{C}{\nu} \| \na G\|_{L^2} \| \div u\|_{L^2} \\
& \le \frac{1}{8} \| \sqrt{\n} \dot{u}\|^2_{L^2}+ C A^2_1 \| \na u\|^2_{L^2}
+\frac{C}{\nu^2} \| \div u\|^2_{L^2}.
\ea\ee
Putting (\ref{0x15}), (\ref{0x18}) and (\ref{0x20}) into (\ref{0x14}) implies that
\be\la{0x21}\ba
\frac{d}{dt} A^2_1 + \| \sqrt{\n} \dot{u}\|^2_{L^2} 
& \le C \| \na u\|^4_{L^2} + C A^2_1 \| \na u\|^2_{L^2} + \frac{C}{\nu^2} \| \div u\|^2_{L^2} \\
&\le C A^2_1 \left( A^2_1 + \| \na u\|^2_{L^2} \right)+ \frac{C}{\nu^4} \| P-\ol{P} \|^4_{L^2} + \frac{C}{\nu^2} \| \div u\|^2_{L^2},
\ea\ee
where in the second inequality we have used the following estimate:
\be\la{0x22}\ba
\| \na u\|^2_{L^2} 
& \le C \left( \| \div u \|^2_{L^2} + \| \o \|^2_{L^2} \right) \\
& \le C \left( \frac{1}{\nu^2} \| G \|^2_{L^2} + \frac{1}{\nu^2} \| P-\ol{P} \|^2_{L^2} + A^2_1 \right) \\
& \le C A^2_1 + \frac{C}{\nu^2} \| P-\ol{P} \|^2_{L^2},
\ea\ee
due to (\ref{dc1}).

In addition, by virtue of (\ref{gw}), (\ref{0x1}), (\ref{0xx2}) we get
\be\la{x23}\ba
\int_0^T A^2_1 dt \le C \int_0^T \| \o \|^2_{L^2} + \nu \| \div u \|^2_{L^2}
+\frac{1}{\nu} \| P-\ol{P} \|^2_{L^2} dt \le C,
\ea\ee
which together with (\ref{0x21}), (\ref{0x1}), (\ref{0xx2}) and Gr\"onwall's inequality leads to (\ref{x1000}).

Then, multiplying (\ref{0x21}) by $\si$ and using (\ref{mdsj}) and (\ref{0x1}) gives
\be\la{0x24}\ba
\frac{d}{dt} \left( \si A^2_1 \right) + \si \| \sqrt{\n} \dot{u}\|^2_{L^2} 
& \le \si{'} A^2_1 + C \si A^2_1 \left( A^2_1 + \| \na u\|^2_{L^2} \right)+ \frac{C}{\nu} \| P-\ol{P} \|^2_{L^2} + C \| \div u\|^2_{L^2},
\ea\ee
which combined with (\ref{0x1}), (\ref{x23}) and Gr\"onwall's inequality implies (\ref{0x10}).
\end{proof}

\begin{lemma}\la{g1}
There exists a positive constant $C$ depending only on $\ga,\ \mu,\ E_0,\ \rs,\ \ol{\n_0}$ such that
\be\ba\la{pd11}
\sup_{0\le t\le T}
\si^2 \int\n|\dot u|^2dx
+\int_0^{T} \si^2 \| \na\dot u\|^2_{L^2} dt \le C.
\ea\ee
\end{lemma}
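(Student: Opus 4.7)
The plan is to adapt Hoff's material-derivative method—as used in Huang-Li-Xin \cite{HLX2} and Li-Xin \cite{LX2} in related contexts—weighted by $\sigma^2$ to accommodate the low regularity $u_0\in H^1$. First, I would apply the operator $\partial_t + \div(u\,\cdot)$ componentwise to the momentum equation $(\ref{ns})_2$. Using the continuity equation $(\ref{ns})_1$ on the left produces $(\rho \dot u^j)_t + \div(\rho u \dot u^j) = \rho \partial_t \dot u^j + \rho u \cdot \nabla \dot u^j$, and commuting the operator past the viscous and pressure terms on the right gives
\bnn
\rho \partial_t \dot u^j + \rho u \cdot \nabla \dot u^j = \mu \Delta \dot u^j + (\mu+\lambda)\partial_j \div \dot u - \partial_j \dot P + R^j,
\enn
where $R^j$ collects commutator contributions of the schematic form $\partial_k u^i \partial_i u^j$ and $P\,\partial_k u$.

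Next, I would test against $\dot u^j$ and integrate by parts over $\T^2$, using $(\ref{ns})_1$ once more to turn the convective contribution on the left into $\tfrac{1}{2}\tfrac{d}{dt}A_2^2$. The viscous terms yield the dissipation $\mu \|\nabla \dot u\|_{L^2}^2 + (\mu+\lambda)\|\div \dot u\|_{L^2}^2$. The pressure term simplifies via $\dot P = -\gamma P \div u$ (from $(\ref{ns})_1$), so $\int \partial_j \dot P \,\dot u^j dx$ reduces to a multiple of $\int P\,(\div u)(\div \dot u)dx$, controlled by $\|P\|_{L^\infty}\|\div u\|_{L^2}\|\div \dot u\|_{L^2}$ and absorbed via \eqref{mdsj} into the $\div \dot u$ dissipation. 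After one integration by parts the commutator $\int R^j \dot u^j dx$ is bounded by $C\|\nabla u\|_{L^4}^2 \|\nabla \dot u\|_{L^2} + C\|P\|_{L^\infty}\|\nabla u\|_{L^2}\|\nabla \dot u\|_{L^2}$. Young's inequality then delivers
\bnn
\frac{d}{dt}A_2^2 + \mu \|\nabla \dot u\|_{L^2}^2 \leq C \|\nabla u\|_{L^4}^4 + C\left(1+\|\nabla u\|_{L^2}^2\right).
\enn

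The bound on $\|\nabla u\|_{L^4}^4$ is the crux and is to be handled through the effective viscous flux decomposition. Lemma \ref{dc} together with \eqref{gw} gives $\|\nabla u\|_{L^4} \leq C(\|G\|_{L^4} + \|\omega\|_{L^4} + \|P-\ol P\|_{L^4})$, while Gagliardo-Nirenberg \eqref{gn11} combined with Lemma \ref{estg} and \eqref{0x13} yields $\|G\|_{L^4}^2 \leq C\|G\|_{L^2}\|\nabla G\|_{L^2} \leq C A_1 A_2$, and analogously for $\omega$. This produces $\|\nabla u\|_{L^4}^4 \leq C A_1^2 A_2^2 + C$ with a constant depending on $\rs$ through $\|P\|_{L^\infty}$. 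Multiplying the resulting differential inequality by $\sigma^2$ and using $2\sigma \sigma' \leq 2\sigma$ leads to
\bnn
\frac{d}{dt}(\sigma^2 A_2^2) + \mu \sigma^2 \|\nabla \dot u\|_{L^2}^2 \leq 2\sigma A_2^2 + C A_1^2 (\sigma^2 A_2^2) + C\sigma^2 (1 + \|\nabla u\|_{L^2}^2).
\enn
The first term on the right is integrable by \eqref{0x10}, the last by \eqref{0x1}; applying Gr\"onwall's inequality with multiplier $A_1^2$, whose time integral is controlled by \eqref{x23}, closes the estimate and delivers \eqref{pd11}.

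The principal obstacle is securing the $\nu$-independent bound $\|\nabla u\|_{L^4}^4 \lesssim A_1^2 A_2^2 + 1$: a direct route through $\|\nabla^2 u\|_{L^2}$ and elliptic regularity for $(\ref{ns})_2$ would pick up factors of $\nu$ and destroy the Gr\"onwall loop; routing the estimate through $G$ and $\omega$, both of which obey $\nu$-free elliptic bounds via Lemma \ref{estg} and $\rho \dot u = \nabla G + \mu \nabla^\bot \omega$, is precisely what restores the clean $A_1 A_2$ structure. The secondary subtlety is the $\sigma^2$ weighting, which is forced on us because $A_2^2(0)$ need not be finite under only $u_0 \in H^1$; this time-layer is what reconciles the Hoff-type estimate with the weak initial regularity and is why only $\sigma A_2^2 \in L^1$—rather than $A_2^2 \in L^1$—is available as the Gr\"onwall source from \eqref{0x10}.
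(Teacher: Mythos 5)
Your overall scaffolding (Hoff's material-derivative operator, test against $\dot u$, time-layer weight $\sigma^2$, routing $\|\nabla u\|_{L^4}$ through $G$ and $\omega$) is the right family of ideas, and your remark that the $\sigma^2$ weight is what reconciles Hoff's method with $u_0\in H^1$ is correct. But there is a genuine gap that breaks the $\nu$-uniformity the lemma demands, and it occurs exactly at the step you treat as routine.

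You apply $\partial_t + \div(u\,\cdot)$ to the \emph{original} momentum equation, keeping $(\mu+\lambda)\nabla\div u$ as a separate term. After testing against $\dot u$ and integrating by parts, the commutator coming from this term is not of the form $C\|\nabla u\|_{L^4}^2\|\nabla\dot u\|_{L^2}$ with $C$ independent of $\nu$; it carries the factor $\mu+\lambda=\nu-\mu$. Concretely, one encounters
\begin{equation*}
(\mu+\lambda)\int \div\dot u\;\partial_i u^j\partial_j u^i\,dx
= (\mu+\lambda)\int \div\dot u\,(\div u)^2\,dx
+ 2(\mu+\lambda)\int \div\dot u\;\nabla u^1\cdot\nabla^\perp u^2\,dx .
\end{equation*}
The first piece is harmless because $(\mu+\lambda)(\div u)^2=\tfrac{\nu-\mu}{\nu^2}(G+P-\ol P)^2$ has a bounded coefficient, but the second has no spare power of $\div u$ to neutralize the $(\nu-\mu)$. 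Absorbing it into $(\mu+\lambda)\|\div\dot u\|_{L^2}^2$ via Young leaves a source $\sim(\mu+\lambda)\|\nabla u\|_{L^4}^4$; absorbing into $\mu\|\nabla\dot u\|_{L^2}^2$ leaves $\sim(\mu+\lambda)^2/\mu\,\|\nabla u\|_{L^4}^4$. Either way, your differential inequality actually reads $\tfrac{d}{dt}A_2^2+\mu\|\nabla\dot u\|^2_{L^2}\le C(\nu)\|\nabla u\|_{L^4}^4+\ldots$ with $C(\nu)\sim\nu$. After multiplying by $\sigma^2$ and using $\|\nabla u\|_{L^4}^4\lesssim A_1^2A_2^2+1$, Gr\"onwall produces a factor like $\exp(C\nu\int_0^T A_1^2\,dt)$, which is not bounded uniformly in $\nu$. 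Since \eqref{pd11} is later used in Lemma \ref{g5} to choose $\nu_1$ and in the proof of Theorem \ref{th01} as $\nu\to\infty$, a $\nu$-dependent constant is fatal.

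The paper avoids this by rewriting the momentum equation as $\rho\dot u=\mu\Delta u+\tfrac{\nu-\mu}{\nu}\nabla G-\tfrac{\mu}{\nu}\nabla(P-\ol P)$ (see \eqref{pd12}) \emph{before} applying Hoff's operator. This replaces the unbounded coefficient $\mu+\lambda$ in front of $\nabla\div u$ by $\tfrac{\nu-\mu}{\nu}\le 1$ in front of $\nabla G$, and the residual commutator $-\tfrac{\nu-\mu}{\nu}\int\dot G\,\partial_i u^j\partial_j u^i\,dx$ is then handled by extracting a total time derivative $\tfrac{d}{dt}\int G\,\partial_i u\cdot\nabla u^i\,dx$ (estimates \eqref{pd110}--\eqref{pd120}), exploiting the $\tfrac{\nu-\mu}{\nu^2}\|\dot G\|_{L^2}^2$ dissipation that your formulation does not produce, and invoking the $\mathcal{H}^1$-$\mathcal{BMO}$ duality for the Jacobian $\nabla u^1\cdot\nabla^\perp u^2$. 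None of this is cosmetic: without the EVF substitution and the extracted $d/dt$ term there is no $\nu$-independent closure. Your ``crux'' paragraph correctly identifies that the $\|\nabla u\|_{L^4}^4$ estimate must route through $G$ and $\omega$ rather than $\nabla^2 u$, but the same routing must happen \emph{inside} the derivation of the differential inequality itself, not only when bounding the source on the right-hand side.

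A smaller point: the paper also avoids Gr\"onwall at this stage by observing $\sigma\|\nabla u\|_{L^2}^2\le C$ (from \eqref{0x10} and \eqref{0x22}), so the term $\sigma^2\|\sqrt{\rho}\dot u\|_{L^2}^2\|\nabla u\|_{L^2}^2$ is dominated by $C\sigma\|\sqrt{\rho}\dot u\|_{L^2}^2$ and integrated directly, which is cleaner than the Gr\"onwall loop you propose; this is a stylistic difference and not a gap, but it is worth noting.
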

\begin{proof}
First, we rewrite $(\ref{ns})_2$ using (\ref{gw}) as
\be\la{pd12}\ba
\n \dot{u} = \mu \Delta u+\frac{\nu-\mu}{\nu} \na G -\frac{\mu}{\nu} \na(P-\ol{P}).
\ea\ee
Following the approach of \cite{H1,LZ}, we apply the operator $ \dot u^j[\pa/\pa t+\div
(u\cdot)]$ to $ (\ref{pd12})^j.$ Summing with respect to $j,$ and integrating over ${\T^2}$ yields
\be\la{pd13} \ba &
\left(\frac{1}{2}\int\rho|\dot{u}|^2dx \right)_t\\
& = \mu\int\dot{u}^j \left[\Delta u_t^j + \text{div}(u\Delta u^j) \right] dx 
-\frac{\mu}{\nu}\int\dot{u}^j \left[\p_jP_t+\text{div}(u \p_j (P-\ol{P}) \right]dx \\
& \quad + \frac{\nu-\mu}{\nu} \int \dot{u}^j \left[\p_t\p_j G+\text{div}(u\p_j G)\right] dx \\
& \triangleq\sum_{i=1}^{3}N_i. \ea\ee
For $N_1$, integration by parts and applying Young's inequality lead to
\be\la{pd14}\ba
N_1 & =  \mu\int \dot{u}^j \left[\Delta u_t^j
+ \text{div}(u\Delta u^j) \right]dx \\
& = - \mu\int \left[|\nabla\dot{u}|^2 + \p_i\dot{u}^j\p_ku^k\p_iu^j - \p_i\dot{u}^j\p_iu^k\p_ku^j - \p_k\dot{u}^j \p_iu^j\p_iu^k \right]dx \\
&\le -\frac{ 3\mu}{4} \| \na \dot{u} \|_{L^2}^2 + C \| \na u \|_{L^4}^4.
\ea\ee
Similarly, combining $(\ref{ns})_1$ with Young's inequality implies that
\be\la{pd15}\ba
N_2 & = - \frac{\mu}{\nu} \int \dot{u}^j \left[\p_jP_t + \text{div}(u \p_j (P-\ol{P}) \right]dx \\
& = \frac{\mu}{\nu} \int \left[-P^{'}\rho\div \dot u\div u + (P-\ol{P}) \div \dot u\div u 
-(P-\ol{P}) \p_i\dot{u}^j\p_ju^i \right]dx \\
& \le \frac{\mu}{8} \| \na \dot{u} \|_{L^2}^2 + \frac{C}{\nu^2} \| \div u\|^2_{L^2}
+ \frac{C}{\nu^2} \| P-\ol{P} \|^2_{L^4} \| \na u \|^2_{L^4}.
\ea\ee
Next, integrating by parts and using (\ref{p2}), (\ref{0x17}) and Young's inequality,
we derive
\be\la{pd16}\ba
N_3 & = \frac{\nu-\mu}{\nu} \int\dot{u}^j \left[\p_j \p_t G + \text{div}( u \p_j G ) \right] dx \\
& = -\frac{\nu-\mu}{\nu} \int \div \dot{u} \left( \dot{G}-u \cdot \na G \right) dx
+\frac{\nu-\mu}{\nu} \int \dot{u}^j \p_j G \div u + \dot{u}^j u \cdot \na \p_j G dx \\
& = -\frac{\nu-\mu}{\nu} \int \div \dot{u} \dot{G} dx
+\frac{\nu-\mu}{\nu} \int -G \div \dot{u} \div u - G \dot{u}^j \p_j \div u
- \dot{u}^j \p_j u \cdot \na G dx \\
& = -\frac{\nu-\mu}{\nu} \int \div \dot{u} \dot{G} dx
+\frac{\nu-\mu}{\nu} \int -G \div \dot{u} \div u + G \p_j u \cdot \na \dot{u}^j dx \\
& = -\frac{\nu-\mu}{\nu} \int \div \dot{u} \dot{G} dx
+\frac{\nu-\mu}{\nu} \int G \left( \na u_1 \cdot \na^\bot \dot{u}^2 - \na u_2 \cdot \na^\bot \dot{u}^1 \right) dx \\
& \le -\frac{\nu-\mu}{\nu} \int \div \dot{u} \dot{G} dx + C \| \na G \|_{L^2} \| \na u \|_{L^2} \| \na \dot{u} \|_{L^2} \\
& \le -\frac{\nu-\mu}{\nu} \int \div \dot{u} \dot{G} dx + 
\frac{\mu}{8} \| \na \dot{u} \|^2_{L^2}
+ C \| \sqrt{\n} \dot{u} \|^2_{L^2} \| \na u \|^2_{L^2},
\ea\ee
where we have used the following fact:
\be\la{pd17}\ba
\sum_{j=1}^{2} \p_j u \cdot \na \dot{u}^j
= \div u \div \dot{u} + \na u^1 \cdot \na^\bot \dot{u}^2 - \na u^2 \cdot \na^\bot \dot{u}^1.
\ea\ee
We now estimate the first term in the last line of (\ref{pd16}). Using the definition of $G$ and $(\ref{ns})_1$, we obtain
\be\la{pd18}\ba
\div \dot{u} & = (\div u)_t + \p_i u^j \p_j u^i + u \cdot \na \div u \\
& = \frac{1}{\nu} \dot{G} + \frac{1}{\nu} (P_t+u \cdot \na P) - \frac{1}{\nu} \ol{P_t}
+ \p_i u^j \p_j u^i \\
& = \frac{1}{\nu} \dot{G} - \frac{1}{\nu} \ga P \div u + \frac{\ga-1}{\nu} \int P \div u dx + \p_i u^j \p_j u^i,
\ea\ee
which together with Young's inequality yields
\be\la{pd19}\ba
-\frac{\nu-\mu}{\nu} \int \div \dot{u} \dot{G} dx
& = -\frac{\nu-\mu}{\nu^2} \| \dot{G} \|^2_{L^2} 
-\frac{\nu-\mu}{\nu} \int \dot{G} \p_i u^j \p_j u^i dx \\
& \quad + \frac{\nu-\mu}{\nu^2} \ga \int \dot{G} P \div u dx
-\frac{\nu-\mu}{\nu^2} (\ga-1) \int P \div u dx \int \dot{G} dx \\
& \le -\frac{\nu-\mu}{2\nu^2} \| \dot{G} \|^2_{L^2}+\frac{C}{\nu} \| \div u \|^2_{L^2}
-\frac{\nu-\mu}{\nu} \int \dot{G} \p_i u^j \p_j u^i dx.
\ea\ee
For the last term in the final line of (\ref{pd19}), integration by parts results in
\be\la{pd110}\ba
\int \dot{G} \p_i u^j \p_j u^i dx 
& = \int ( G_t + u \cdot \na G ) \p_i u \cdot \na u^i dx \\
& = \frac{d}{dt} \left( \int G \p_i u \cdot \na u^i dx \right) 
- 2 \int G \p_i u \cdot \na u^i_t dx \\
& \quad -\int G \div u \p_i u \cdot \na u^i dx 
-2 \int G u \cdot \na \p_i u \cdot \na u^i dx \\
& = \frac{d}{dt} \left( \int G \p_i u \cdot \na u^i dx \right) 
- 2 \int G \p_i u \cdot \na \dot{u}^i dx \\
& \quad +2 \int G \p_i u \cdot \na u \cdot \na u^i dx
-\int G \div u \p_i u \cdot \na u^i dx,
\ea\ee
which implies that
\be\la{pd111}\ba
-\frac{\nu-\mu}{\nu} \int \dot{G} \p_i u^j \p_j u^i dx
& =-\frac{\nu-\mu}{\nu} \frac{d}{dt} \left( \int G \p_i u \cdot \na u^i dx \right)
+ \frac{2(\nu-\mu)}{\nu} \int G \p_i u \cdot \na \dot{u}^i dx \\
& \quad - \frac{2(\nu-\mu)}{\nu} \int G \p_i u \cdot \na u \cdot \na u^i dx
+\frac{\nu-\mu}{\nu} \int G \div u \p_i u \cdot \na u^i dx.
\ea\ee
Next, we estimate the last three terms on the right-hand side of (\ref{pd111}) separately.
It follows from (\ref{pd17}), (\ref{gw}), (\ref{p2}) and Young's inequality that
\be\la{pd112}\ba
\frac{2(\nu-\mu)}{\nu} \int G \p_i u \cdot \na \dot{u}^i dx
& \le 2 \left| \int G \left( \div u \div \dot{u} + \na u^1 \cdot \na^\bot \dot{u}^2 - \na u^2 \cdot \na^\bot \dot{u}^1 \right) dx \right| \\
& \le C \| \na \dot{u} \|_{L^2} \| G \|_{L^4} \| \div u \|_{L^4}
+ C \| \na G \|_{L^2} \| \na \dot{u} \|_{L^2} \| \na u \|_{L^2} \\
& \le \frac{\mu}{8} \| \na \dot{u} \|^2_{L^2} + \frac{C}{\nu^2} \| G \|^4_{L^4}
+ \frac{C}{\nu^2} \| P-\ol{P} \|^4_{L^4} 
+ C \| \sqrt{\n} \dot{u} \|^2_{L^2} \| \na u \|^2_{L^2}.
\ea\ee
In addition, by virtue of (\ref{gw}) and Young's inequality, it holds that
\be\la{pd113}\ba
- \frac{2(\nu-\mu)}{\nu} \int G \p_i u \cdot \na u \cdot \na u^i dx
& \le 2 \left| \int G \left( (\div u)^3+3\div u \na u^1 \cdot \na^\bot u^2 \right) dx \right| \\
& \le C \| G \|_{L^4} \| \div u \|_{L^4} \| \na u \|^2_{L^4} \\
& \le \frac{C}{\nu^2} \| G \|^4_{L^4} + \frac{C}{\nu^2} \| P-\ol{P} \|^4_{L^4} 
+ C \| \na u \|^4_{L^4},
\ea\ee
where in the first inequality, we have used the following fact:
\be\la{pd114}\ba
\sum_{i=1}^{2} \p_i u \cdot \na u \cdot \na u^i
= (\div u)^3 +3 \div u \na u^1\cdot \na^\bot \dot{u}^2.
\ea\ee
Similar to (\ref{pd113}) we have
\be\la{pd115}\ba
\int G \div u \p_i u \cdot \na u^i dx 
\le \frac{C}{\nu^2} \| G \|^4_{L^4} + \frac{C}{\nu^2} \| P-\ol{P} \|^4_{L^4} 
+ C \| \na u \|^4_{L^4}.
\ea\ee
Substituting (\ref{pd112}), (\ref{pd113}) and (\ref{pd115}) into (\ref{pd111}), we derive
\be\la{pd116}\ba
-\frac{\nu-\mu}{\nu} \int \dot{G} \p_i u^j \p_j u^i dx
& \le -\frac{\nu-\mu}{\nu} \frac{d}{dt} \left( \int G \p_i u \cdot \na u^i dx \right)
+ \frac{\mu}{8} \| \na \dot{u} \|^2_{L^2} + \frac{C}{\nu^2} \| G \|^4_{L^4} \\
& \quad + \frac{C}{\nu^2} \| P-\ol{P} \|^4_{L^4} 
+ C \| \sqrt{\n} \dot{u} \|^2_{L^2} \| \na u \|^2_{L^2} + C \| \na u \|^4_{L^4}.
\ea\ee
Combining (\ref{pd16}), (\ref{pd19}) and (\ref{pd116}) yields
\be\la{pd117}\ba
N_3 & \le -\frac{\nu-\mu}{\nu} \frac{d}{dt} \left( \int G \p_i u \cdot \na u^i dx \right)
-\frac{\nu-\mu}{2\nu^2} \| \dot{G} \|^2_{L^2}+\frac{C}{\nu} \| \div u \|^2_{L^2}
+ \frac{\mu}{4} \| \na \dot{u} \|_{L^2} \\
& \quad + C \| \sqrt{\n} \dot{u} \|^2_{L^2} \| \na u \|^2_{L^2} + \frac{C}{\nu^2} \| G \|^4_{L^4}
+ \frac{C}{\nu^2} \| P-\ol{P} \|^4_{L^4} + C \| \na u \|^4_{L^4}.
\ea\ee
Putting (\ref{pd14}), (\ref{pd15}) and (\ref{pd117}) into (\ref{pd13}), we obtain
\be\la{pd118}\ba
&\left(\int\rho|\dot{u}|^2dx + \frac{2(\nu-\mu)}{\nu} \int G \p_i u \cdot \na u^i dx \right)_t
+ \frac{3\mu}{4} \| \na \dot{u} \|_{L^2}^2 + \frac{\nu-\mu}{\nu^2} \| \dot{G} \|^2_{L^2} \\
& \le \frac{C}{\nu} \| \div u \|^2_{L^2} + C \| \sqrt{\n} \dot{u} \|^2_{L^2} \| \na u \|^2_{L^2} + \frac{C}{\nu^2} \| G \|^4_{L^4}
+ \frac{C}{\nu^2} \| P-\ol{P} \|^4_{L^4} + C \| \na u \|^4_{L^4}.
\ea\ee
Multiplying (\ref{pd118}) by $\si^2$ and applying (\ref{0x10}) and (\ref{gw}) implies that
\be\la{pd119}\ba
&\left( \si^2 \int\rho|\dot{u}|^2dx + \frac{2(\nu-\mu)}{\nu} \si^2 \int G \p_i u \cdot \na u^i dx \right)_t
+ \frac{3\mu}{4} \si^2 \| \na \dot{u} \|_{L^2}^2 + \frac{\nu-\mu}{\nu^2} \si^2 \| \dot{G} \|^2_{L^2} \\
& \le 2 \si^\prime \si \left(\int\rho|\dot{u}|^2dx + \frac{2(\nu-\mu)}{\nu} \int G \p_i u \cdot \na u^i dx \right)
+\frac{C}{\nu} \| \div u \|^2_{L^2} + C \si \| \sqrt{\n} \dot{u} \|^2_{L^2} \\
& \quad + \frac{C}{\nu^2} \si^2 \| G \|^4_{L^4}
+ \frac{C}{\nu^2} \si^2 \| P-\ol{P} \|^4_{L^4} + C \si^2 \| \na u \|^4_{L^4}.
\ea\ee
Moreover, it follows from (\ref{gw}), (\ref{p2}) and Young's inequality that
\be\la{pd120}\ba
\frac{2(\nu-\mu)}{\nu} \int G \p_i u \cdot \na u^i dx 
& \le 2 \left| \int G (\div u)^2 + 2G\na u^1 \cdot \na^\bot u^2 dx \right| \\
& \le \frac{C}{\nu^2} \| G \|^3_{L^3} + \frac{C}{\nu^2} \| P-\ol{P} \|^3_{L^3}
+C \| \na G \|_{L^2} \| \na u \|^2_{L^2} \\
& \le \frac{C}{\nu^2} \| G \|^2_{L^2} \| \na G \|_{L^2} + \frac{C}{\nu^2} \| P-\ol{P} \|^3_{L^3}
+C \| \sqrt{\n} \dot{u} \|_{L^2} \| \na u \|^2_{L^2} \\
& \le \frac14 \| \sqrt{\n} \dot{u} \|^2_{L^2}
+\frac{C}{\nu^4} \| G \|^4_{L^2} + \frac{C}{\nu^2} \| P-\ol{P} \|^3_{L^3}
+C \| \na u \|^4_{L^2}.
\ea\ee
By virtue of (\ref{gn11}), (\ref{p2}) and (\ref{0x10}), we have
\be\la{pd121}\ba
\nu^{-2} \int_0^{T} \si^2 \| G \|^4_{L^4}dt 
& \le C \nu ^{-2} \int_0^{T} \si^2 \| G \|^2_{L^2}\| \na G \|^2_{L^2} dt
\le C.
\ea\ee
Combining this with (\ref{dc1}), we derive
\be\la{pd122}\ba
\int_0^{T} \si^2 \|\na u\|^4_{L^4}dt 
& \le C \int_0^{T} \si^2 \|\div u\|^4_{L^4} + \si^2 \|\o \|^4_{L^4} dt \\
& \le C \int_0^{T} \si^2 \nu^{-4} \left( \| G \|^4_{L^4} + \| P-\ol{P} \|^4_{L^4} \right) 
+ \si^2 \|\o \|^2_{L^2} \| \na \o \|^2_{L^2}dt \\
& \le C.
\ea\ee

Integrating (\ref{pd119}) over $(0,T)$ and applying (\ref{0x10}), 
(\ref{pd120}), (\ref{pd121}), (\ref{pd122}) and H\"older's inequality, we obtain (\ref{pd11}) and complete the proof of Lemma \ref{g1}.
\end{proof}

Next, we employ an approach referring to the work \cite{CL} to derive the exponential decay estimate.
\begin{lemma}\la{g3}
For any $s\in [1,\infty)$,
there exist positive constants $C,\ K_0,\ \nu_0$, where $C$
depends only on $s,\ \ga,\ \mu,\ E_0,\ \ol{\n_0},\ \rs$;
$K_0$ depends only on $\mu,\ \ga, \ \ol{\n_0},\ \rs$;
and $\nu_0$ depends on $\ga,\ \mu,\ \ol{\n_0},\ \rs$,
such that when $\nu \geq \nu_0$, it holds that
\be\la{pd301}\ba
\|P-\ol{P}\|_{L^2}^2 + \| \n-\ol{\n}\|^{s}_{L^s} \le C e^{-2 \alpha_0 t},
\ea\ee
\be\la{pd302}\ba
\sup_{1 \le t \le T}\left(e^{ \alpha_0 t}\left( \| \o\|^2_{L^2} +\nu \| \div u\|^2_{L^2} \right) \right) +
\int_1^T e^{ \alpha_0 t} \|\sqrt{\n}\dot{u}\|^2_{L^2}dt \le C,
\ea\ee 
\be\ba\la{pd303}
\sup_{1 \le t\le T} \left(e^{ \alpha_0 t} \| \sqrt{\n} \dot u \|^2_{L^2} \right)
+\int_1^{T} e^{ \alpha_0 t} \| \na\dot u\|^2_{L^2} dt \le C,
\ea\ee
where we denote $\alpha_0=\frac{K_0}{\nu}$.
\end{lemma}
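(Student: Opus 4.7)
The plan is to build an exponentially-weighted Lyapunov functional whose decay rate is $\alpha_0 = K_0/\nu$. The slow rate $1/\nu$ originates from the pressure equation, while the dissipations already available for $A_1^2$ from Lemma \ref{l4} and for $\|\sqrt{\n}\dot u\|_{L^2}^2$ from Lemma \ref{g1} are coercive enough to be compatible with it.

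I would first derive a dissipation inequality for $\|P-\ol{P}\|_{L^2}^2$ with rate of order $1/\nu$. Combining $(\ref{ns})_1$ with $P=\n^\ga$ yields $P_t+\div(uP)+(\ga-1)P\div u=0$. Multiplying by $P-\ol{P}$, integrating by parts, and substituting $\nu\div u = G+(P-\ol{P})$ from (\ref{gw}) produces a coercive term $\frac{\ga\ol{P}}{\nu}\|P-\ol{P}\|_{L^2}^2$. By conservation of mass $\ol{\n}=\ol{\n_0}=1$ and Jensen's inequality one has $\ol{P}\ge \ol{\n_0}^\ga = 1$, so the rate is genuinely of order $1/\nu$. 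The remaining terms are cubic in $P-\ol{P}$, absorbable because $P\in L^\infty$ by (\ref{mdsj}), together with a cross term bounded via Cauchy--Schwarz by $\frac{C}{\nu}\bigl(\|P-\ol{P}\|_{L^2}^2+\|G\|_{L^2}^2\bigr)$.

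Next, I would combine this with the dissipative estimate (\ref{0x21}) for $A_1^2$. The crucial coercivity comes from the momentum identity (\ref{0x13}) and Lemma \ref{estg}: since both $G$ and $\o$ have zero spatial mean on $\T^2$, Poincar\'e's inequality gives
\[
\|\sqrt{\n}\dot u\|_{L^2}^2 \ge c\bigl(\|G\|_{L^2}^2+\|\o\|_{L^2}^2\bigr)\ge c\,A_1^2\quad\text{for }\nu\ge 1,
\]
uniformly in $\nu$. I then form the Lyapunov functional
\[
\mathcal{E}(t):=A_1^2(t)+\eta\,\|P-\ol{P}\|_{L^2}^2,
\]
with $\eta$ small enough to absorb the cross term $\frac{C}{\nu}\|G\|_{L^2}^2$ into the $A_1^2$-dissipation. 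For $t\ge 1$, the nonlinear perturbations $A_1^2(A_1^2+\|\na u\|_{L^2}^2)$ and $\nu^{-4}\|P-\ol{P}\|_{L^2}^4$ on the right of (\ref{0x21}) are small uniformly in $\nu$ thanks to (\ref{0x10}) and (\ref{0x22}), provided $\nu\ge\nu_0$. This yields $\frac{d}{dt}\mathcal{E}+\frac{2K_0}{\nu}\mathcal{E}\le 0$, and Gr\"onwall's inequality gives (\ref{pd301}) for $s=2$ and the pointwise parts of (\ref{pd302}); integrating the coercivity above against $e^{\alpha_0 t}$ delivers the $\int_1^T e^{\alpha_0 t}\|\sqrt{\n}\dot u\|_{L^2}^2 dt$ bound. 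For general $s\in[1,\infty)$, interpolating through the uniform bound $\|\n-\ol{\n}\|_{L^\infty}\le 2\tilde{\n}$ in (\ref{mdsj}) yields $\|\n-\ol{\n}\|_{L^s}^s\le C\|\n-\ol{\n}\|_{L^2}^2\le C e^{-2\alpha_0 t}$ for $s\ge 2$, with H\"older's inequality handling $s\in[1,2)$.

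For (\ref{pd303}) I would multiply (\ref{pd118}) by $e^{\alpha_0 t}$ and integrate on $[1,T]$. The auxiliary quantity $\frac{2(\nu-\mu)}{\nu}\int G\,\p_i u\cdot\na u^i\,dx$ is bounded via (\ref{pd120}) by $\frac14\|\sqrt{\n}\dot u\|_{L^2}^2$ plus controllable pieces, so that $\int\n|\dot u|^2 dx$ and the full Lyapunov quantity remain comparable. The quartic forcing $\nu^{-2}(\|G\|_{L^4}^4+\|P-\ol{P}\|_{L^4}^4)+\|\na u\|_{L^4}^4$ becomes integrable in time with weight $e^{\alpha_0 t}$ thanks to the exponential decay of $A_1^2$ and $\|P-\ol{P}\|_{L^2}^2$ established above, combined with the interpolations used for (\ref{pd121})--(\ref{pd122}). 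The extra term $\alpha_0 e^{\alpha_0 t}\int\n|\dot u|^2 dx$ produced by differentiating the weight is of size $1/\nu$; applying Lemma \ref{pt} to $\dot u$ makes it absorbable by the dissipation $\frac{3\mu}{4}e^{\alpha_0 t}\|\na\dot u\|_{L^2}^2$ once $\nu$ is large. The principal obstacle is producing exactly the rate $K_0/\nu$: the constant $\eta$ must be chosen in a narrow window to kill cross terms, and the nonlinear perturbations in (\ref{0x21}) and (\ref{pd118}) must be genuinely negligible on $[1,T]$, which forces $\nu\ge\nu_0$ sufficiently large and prevents the rate from being $O(1)$.
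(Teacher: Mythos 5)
Your proposal diverges from the paper at its most delicate point, the derivation of (\ref{pd301}). The paper does not touch the pressure evolution equation at all: it tests the momentum equation against the Bogovskii-type multiplier $-\nabla(-\Delta)^{-1}(\rho-\ol{\rho})$ to obtain (\ref{pd36}), whose coercivity rests on the pointwise monotonicity bound $(\rho^\gamma-\ol{\rho}^\gamma)(\rho-\ol{\rho})\ge M_1(\rho-\ol{\rho})^2$ from (\ref{ppp32}) --- valid on all of $[0,\rho^*]$ including vacuum because $\ol{\rho}>0$ --- and then decays the functional $Z(t)\sim\tfrac12\|\sqrt{\rho}u\|_{L^2}^2+\int B(\rho,\ol{\rho})\,dx$ at rate $2\alpha_0$. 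You instead multiply $P_t+u\cdot\nabla P+\gamma P\div u=0$ by $P-\ol{P}$ and substitute $\nu\div u=G+(P-\ol{P})$. That does produce $\frac{\gamma\ol{P}}{\nu}\|P-\ol{P}\|_{L^2}^2$, but also the cubic remainder $-\frac{\gamma-1/2}{\nu}\int(P-\ol{P})^3dx$, and here your stated justification --- ``absorbable because $P\in L^\infty$'' --- is not enough. The $L^\infty$ bound only gives $|\int(P-\ol{P})^3\,dx|\le\|P-\ol{P}\|_{L^\infty}\|P-\ol{P}\|_{L^2}^2$, and $\|P-\ol{P}\|_{L^\infty}$ may be as large as $(\rho^*)^\gamma$ while $\ol{P}$ can be as small as $\ol{\rho}_0^{\,\gamma}=1$; the cubic term can therefore swamp the coercive term with no smallness available. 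What actually rescues the computation is the \emph{sign} $P\ge 0$: because $\int P(P-\ol{P})^2\,dx\ge 0$ one gets the one-sided bound $\int(P-\ol{P})^3\,dx\ge-\ol{P}\|P-\ol{P}\|_{L^2}^2$, and since the cubic coefficient $\gamma-\tfrac12$ is strictly smaller than the coercive coefficient $\gamma$, a net dissipation $\frac{\ol{P}}{2\nu}\|P-\ol{P}\|_{L^2}^2$ survives. This one-sided inequality is the exact analogue of the paper's (\ref{ppp32}); without invoking it your route has a genuine gap.

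Two further points do not as yet close as written. First, the nonlinear term $CA_1^2(A_1^2+\|\nabla u\|_{L^2}^2)$ in (\ref{0x21}) is \emph{not} small on $[1,T]$: (\ref{0x10}) only gives $A_1^2\le C$ there, so you do not obtain the pointwise inequality $\mathcal{E}'+2\alpha_0\mathcal{E}\le 0$. What you actually get is $\mathcal{E}'+2\alpha_0\mathcal{E}\le C(A_1^2+\|\nabla u\|_{L^2}^2)\mathcal{E}+(\text{lower order})$, and it is the $\nu$-independent time integrability $\int_1^T(A_1^2+\|\nabla u\|_{L^2}^2)\,dt\le C$ from (\ref{0x1}) and (\ref{x23}) that makes Gr\"onwall close --- precisely the structure of the paper's (\ref{pd319}). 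Second, in (\ref{pd303}) you cannot absorb $\alpha_0 e^{\alpha_0 t}\int\rho|\dot u|^2\,dx$ into $\tfrac{3\mu}{4}e^{\alpha_0 t}\|\nabla\dot u\|_{L^2}^2$ using Lemma~\ref{pt}: that lemma bounds $\|\dot u\|_{L^2}^2$ by $\int\rho|\dot u|^2\,dx+\|\nabla\dot u\|_{L^2}^2$, which runs the wrong way and does not yield $\int\rho|\dot u|^2\,dx\lesssim\|\nabla\dot u\|_{L^2}^2$ unless $\rho^*$ is small. The paper avoids the issue entirely by simply reusing the already-established bound $\int_1^T e^{\alpha_0 t}\|\sqrt{\rho}\dot u\|_{L^2}^2\,dt\le C$ from (\ref{pd302}). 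If you insist on an absorption route, you would need Lemma~\ref{Dl8} together with the identity $\int\rho\dot u\,dx=0$, not Lemma~\ref{pt}.

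With those repairs your Lyapunov $\mathcal{E}=A_1^2+\eta\|P-\ol{P}\|_{L^2}^2$ is a viable and genuinely different route to (\ref{pd301})--(\ref{pd302}): it bypasses the $-\nabla(-\Delta)^{-1}$ multiplier and the auxiliary functional $B(\rho,\ol{\rho})$ entirely, at the cost of having to locate the coercive sign structure directly in the nonlinearity of the pressure equation.
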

\begin{proof}
First, according to the proof of Lemma \ref{l3}, there exist two positive constants $M_2$ and $\widetilde{C}$,
where $M_2$ depends only on $\ga,\ \ol{\n_0},\ \rs$;
 $\widetilde{C}$ depends only on $\mu,\ \ga,\ \ol{\n_0},\ \rs$, 
 such that
\be\la{pd36}\ba
M_2 \int B(\n,\ol{\n})dx
& \leq -2\left(\int \n u \cdot \na (-\Delta)^{-1} (\n -\ol{\n}) dx \right)_t 
+\widetilde{C} \left( \| \o \|^2_{L^2} + \nu ^2 \| \div u\|^2_{L^2} \right),
\ea\ee
where $B(\n,\ol{\n})$ is defined in (\ref{ppp31}).

On the other hand, the standard energy estimate implies that
\be\la{pd37}\ba
\frac{d}{dt} \left(\frac{1}{2}\|\sqrt{\n} u\|^2_{L^2}+\int B(\n,\ol{\n})dx  \right) + \mu \| \o\|^2_{L^2} +\nu \| \div u\|^2_{L^2}=0.
\ea\ee
Then, multiplying (\ref{pd36}) by $\frac{1}{2 \widetilde{C} \nu}$, 
and adding the resulting equation to (\ref{pd37}), we derive
\be\la{pd38}\ba
Z'(t) &\le -\frac{M_2}{2 \widetilde{C} \nu} \int B(\n,\ol{\n})dx +\left(\frac{1}{2 \nu} -\mu\right)  
\| \o\|^2_{L^2} -\frac{\nu}{2} \| \div u\|^2_{L^2},
\ea\ee
where
\be\la{pd39}\ba
Z(t): =\frac{1}{2}\|\sqrt{\n} u\|^2_{L^2}+\int B(\n,\ol{\n})dx +\frac{1}{\widetilde{C} \nu} \int \n u \cdot \na (-\Delta)^{-1} (\n -\ol{\n}) dx.
\ea\ee
Next, it follows from (\ref{ppp32}), Sobolev embedding and Poincar\'e's inequality that
\be\la{pd310}\ba
\left| \int \n u \cdot \na (-\Delta)^{-1} (\n -\ol{\n}) dx \right|
&\le \| \n u\|_{L^2} \| \na (-\Delta)^{-1} (\n -\ol{\n}) \|_{L^2} \\
&\le C \| \sqrt{\n} u\|^2_{L^2} + C \| \n -\ol{\n} \|^2_{L^2} \\
&\le M_3 \left(\frac{1}{2} \| \sqrt{\n} u\|^2_{L^2} + \int B(\n,\ol{\n})dx \right),
\ea\ee
where $M_3$ depends only on $\rs,\ \ga$ and $\ol{\n_0}$.

Therefore, combining (\ref{pd39}) and (\ref{pd310}) yields
\be\la{pd311}\ba
\frac{1}{2} \left(\frac{1}{2} \| \sqrt{\n} u\|^2_{L^2} + \int B(\n,\ol{\n})dx \right) \le Z(t) \le 2\left(\frac{1}{2} \| \sqrt{\n} u\|^2_{L^2} + \int B (\n,\ol{\n})dx \right),
\ea\ee
provided $\nu \ge \frac{2M_3}{\widetilde{C}}$. 

Moreover, we can conclude from (\ref{dc1}) and (\ref{p8}) that
\be\la{pd312}\ba 
\int\n |u|^2dx\le C\|\na u\|_{L^2}^2 \le M_4 \left( \| \o\|^2_{L^2}+\| \div u\|^2_{L^2} \right),
\ea\ee
where $M_4$ depends only on $\rs$ and $\ol{\n_0}$.

Combining (\ref{pd312}), (\ref{pd38}) and (\ref{pd311}),
we can infer that for any $\nu \ge
\max \left\{\frac{2M_3}{\widetilde{C}},\frac{1}{\mu}\right\}$,
\be\la{pd313}\ba
Z'(t) &\le-\frac{M_2}{2 \widetilde{C} \nu} \int B(\n,\ol{\n})dx-\frac{\mu}{2 M_4} \int\n |u|^2dx  \\
&\le -4 \alpha_0 \left(\frac{1}{2} \| \sqrt{\n} u\|^2_{L^2} + \int B(\n,\ol{\n})dx \right) \\
&\le -2 \alpha_0 Z(t),
\ea\ee
with $\alpha_0 = \min\left\{\frac{M_2}{8 \widetilde{C} \nu}, \frac{\mu}{4 M_4} \right\} $.

We select a suitably large $\nu_0$ such that
\begin{equation}\la{pd314}
\nu_0 = \max \left\{
\frac{2M_3}{\widetilde{C}},
\frac{1}{\mu},
\frac{M_2 M_4}{2\mu \widetilde{C}}
\right\}.
\end{equation}
When $\nu \ge \nu_0,\  \alpha_0=\frac{K_0}{\nu}$ with $K_0=\frac{M_2}{8 \widetilde{C}}$, and $K_0$ depends only on $\mu,\ \ga, \ \ol{\n_0}$ and $\rs$,
we deduce from (\ref{pd311}), (\ref{pd313}) and Gr\"onwall's inequality that for any $t \ge 0$
\be\la{pd315}\ba
\int \left(\frac{1}{2} \n |u|^2 + B(\n,\ol{\n}) \right) dx \le C e^{-2 \alpha_0 t},
\ea\ee
which together with (\ref{pd37}) leads to
\be\la{pd316}\ba
\int_0^T \left(\mu \| \o\|^2_{L^2} +\nu \| \div u\|^2_{L^2} \right) e^{\alpha_0 t} dt \le C,
\ea\ee
where the constant $C$ depends only on $\ga,\ \mu,\ E_0,\ \ol{\n_0},\ \rs$.

Additionally, combining (\ref{ppp37}) and (\ref{pd315}) implies
\be\la{pd318}\ba
\|P-\ol{P}\|_{L^2}^2 + \| \n-\ol{\n}\|^{2\ga}_{L^{2\ga}} \le C e^{-2 \alpha_0 t},
\ea\ee
which together with H\"older's inequality gives (\ref{pd301}).

Subsequently, multiplying (\ref{0x21}) by $ e^{\alpha_0 t}$ and integrating the resulting equation over $\T^2 \times [1,T]$,
we can infer from (\ref{0x10}), (\ref{pd316}) and (\ref{pd318}) that for any $T \ge 1$,
\be\la{pd319}\ba
& \sup_{1 \le t \le T} \left( e^{\alpha_0 t} A^2_1 \right) 
+ \int_1^T e^{\alpha_0 t} \| \sqrt{\n} \dot{u}\|^2_{L^2} dt \\
&\le C + \frac{C}{\nu} \int_1^T e^{\alpha_0 t} A^2_1 dt + C \int_1^T e^{\alpha_0 t} A^2_1 \left( A^2_1 + \| \na u\|^2_{L^2} \right) dt \\
& \quad + \frac{C}{\nu^4} \int_1^T e^{-\alpha_0 t} dt
+C\int_1^T e^{\alpha_0 t} \| \div u\|^2_{L^2} dt \\
& \le C,
\ea\ee
where in the second inequality we have used the following estimate:
\be\la{pd3190}\ba
\int_1^T e^{\alpha_0 t} A^2_1 dt
& \le C \int_1^T e^{\alpha_0 t} \left( \| \o \|^2_{L^2} + \nu \| \div u \|^2_{L^2}
+\frac{1}{\nu} \| P-\ol{P} \|^2_{L^2} \right)  dt \\
& \le C+\frac{C}{\nu} \int_1^T e^{-\alpha_0 t} dt \\
& \le C,
\ea\ee
due to (\ref{gn11}), (\ref{gw}), (\ref{pd315}), (\ref{pd316}) and (\ref{pd318}).

Finally, we multiply (\ref{pd118}) by $e^{\alpha_0 t}$ and integrate the resulting equation over $[1,T]$.
Then by making use of (\ref{pd11}), (\ref{pd316}), (\ref{pd318}) and (\ref{pd319}), we are able to derive
\be\la{pd320}\ba
& \sup_{1 \le t\le T} \left(e^{\alpha_0 t} \| \sqrt{\n} \dot{u}\|^2_{L^2} \right) 
+\int_1^T e^{\alpha_0 t} \| \na \dot{u}\|^2_{L^2} dt \\
& \le C + C \int_1^T e^{\alpha_0 t} \| \sqrt{\n} \dot{u}\|^2_{L^2} dt
+ C \nu^{-2} \int_1^T e^{\alpha_0 t} \| G \|^4_{L^4} dt \\ 
& \quad + C \nu^{-2} \int_1^T e^{\alpha_0 t} \| P-\ol{P} \|^4_{L^4} dt
+ C \int_1^T e^{\alpha_0 t} \|\nabla u\|^4_{L^4}dt \\
& \le C,
\ea\ee
where in the last inequality we have used H\"older's inequality and the following estimates:
\be\la{pd321}\ba
\nu^{-2} \int_1^T e^{\alpha_0 t} \| G \|^4_{L^4} dt 
& \le C \nu^{-2} \int_1^{T} e^{\alpha_0 t} \| G \|^2_{L^2} \| \na G \|^2_{L^2} dt \\
& \le C \nu^{-1} \int_1^{T} e^{\alpha_0 t} \| \sqrt{\n} \dot{u} \|^2_{L^2} dt \\
& \le C,
\ea\ee
and
\be\la{pd322}\ba
\int_1^T e^{\alpha_0 t} \|\na u\|^4_{L^4} dt 
& \le C \int_1^{T} e^{\alpha_0 t} \left(  \|\div u\|^4_{L^4} + \|\o \|^4_{L^4} \right) dt \\
& \le C \int_1^{T} e^{\alpha_0 t} \left( \nu^{-4} \|G\|^4_{L^4} 
+ \nu^{-4} \|P-\pb\|^4_{L^4} + \|\o \|^2_{L^2} \| \na \o \|^2_{L^2} \right) dt \\
& \le C + C\int_1^{T} e^{\alpha_0 t} \| \sqrt{\n} \dot{u} \|^2_{L^2} dt \\
& \le C,
\ea\ee
due to (\ref{gn11}), (\ref{p2}), (\ref{dc1}), (\ref{0x10}) and (\ref{pd318}).
\end{proof}

\begin{lemma}\la{g5}
There exists a positive constant 
$\nu_1$ depending on  $\ga,\ \mu,\ \|\n_0\|_{L^1\cap L^\infty},
\ \| \na u_0\|_{L^2}$ and $E_0$,
such that, if $(\n,u)$ satisfies that
\be\ba\la{pd51}
\sup_{0\leq t\leq T} \|\n\|_{L^\infty} 
\leq 2 \tilde{\n},
\ea\ee
then 
\be\ba\la{pd52}
\sup_{0\leq t\leq T} \|\n\|_{L^\infty} 
\leq \frac{3}{2} \tilde{\n},
\ea\ee
provided $\nu \geq \nu_1$.
\end{lemma}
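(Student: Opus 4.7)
The plan is to set up a Lagrangian ODE for $\log\n$ and apply Zlotnik's inequality (Lemma~\ref{zli}), reducing the problem to a uniform-in-$(T,\nu)$ bound on $\int_0^T \|G(\cdot,t)\|_{L^\infty}\,dt$. Let $X(t)$ solve $X'(t)=u(X(t),t)$ and set $y(t):=\log\n(X(t),t)$. The continuity equation together with the definition (\ref{gw}) of $G$ gives
\[
y'(t) = -\div u(X(t),t) = -\frac{1}{\nu}\bigl(G(X(t),t) + e^{\gamma y(t)} - \ol P(t)\bigr).
\]
Fix $\tilde P:=(\gamma-1)E_0$, which by (\ref{0x1}) satisfies $\tilde P\ge\ol P(t)$ for all $t$. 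Writing $y'=g(y)+h'(t)$ with
\[
g(y):=-\frac{1}{\nu}\bigl(e^{\gamma y}-\tilde P\bigr),\qquad h'(t):=-\frac{G(X(t),t)}{\nu}+\frac{\ol P(t)-\tilde P}{\nu},
\]
we have $g(+\infty)=-\infty$ and $g(\zeta)\le 0$ for every $\zeta\ge(1/\gamma)\log\tilde P$, so $\overline{\zeta}:=(1/\gamma)\log\tilde P$ is admissible in Lemma~\ref{zli} with $N_1=0$; the second summand of $h'$ is nonpositive, so $h'(t)\le\|G(\cdot,t)\|_{L^\infty}/\nu$.

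Suppose we can establish the uniform bound $\int_0^T\|G(\cdot,t)\|_{L^\infty}\,dt\le C_\ast$ with $C_\ast$ depending only on $\mu,\gamma,E_0,\ol{\n_0},\rs$. Then the Zlotnik hypothesis holds with $N_0:=C_\ast/\nu$, and the conclusion of Lemma~\ref{zli} reads
\[
\log\n(X(t),t)\le\max\bigl\{\log\|\n_0\|_{L^\infty},\,(1/\gamma)\log\tilde P\bigr\}+C_\ast/\nu.
\]
Exponentiating and using $\max\{A,B\}\le A+B=\tilde\n$ yields $\n(X(t),t)\le e^{C_\ast/\nu}\,\tilde\n$. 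Choosing $\nu_1:=\max\{\nu_0,\,C_\ast/\log(3/2)\}$ forces $e^{C_\ast/\nu}\le 3/2$ whenever $\nu\ge\nu_1$, and taking the supremum over $x_0\in\T^2$ gives (\ref{pd52}).

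To produce $C_\ast$, we partition $[0,T]$ at $\sigma(T)=\min\{1,T\}$ and interpolate by Gagliardo-Nirenberg. Since $\int G\,dx=0$, Poincar\'e's inequality and (\ref{p2}) together with (\ref{pd51}) give $\|G\|_{L^2}\le C\|\na G\|_{L^2}\le C\|\sqrt\n\dot u\|_{L^2}$; moreover $\|\na G\|_{L^q}\le C\|\n\dot u\|_{L^q}\le C\|\dot u\|_{L^q}$ for $q>2$, and the 2D embedding $H^1\hookrightarrow L^q$ together with Lemma~\ref{pt} yields $\|\dot u\|_{L^q}\le C(q)(\|\sqrt\n\dot u\|_{L^2}+\|\na\dot u\|_{L^2})$. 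Plugging these into (\ref{gn12}) applied to the zero-mean $G$ produces a bound of the form
\[
\|G\|_{L^\infty}\le C\|\sqrt\n\dot u\|_{L^2}^{\alpha}\bigl(\|\sqrt\n\dot u\|_{L^2}+\|\na\dot u\|_{L^2}\bigr)^{1-\alpha},
\]
for a tunable $\alpha\in(0,1)$. On the initial layer $[0,\sigma(T)]$ we integrate against the time-weighted controls (\ref{0x10}) and (\ref{pd11}) by H\"older, using $\int\sigma\|\sqrt\n\dot u\|_{L^2}^2\,dt\le C$ and $\int\sigma^2\|\na\dot u\|_{L^2}^2\,dt\le C$ to absorb the $\sigma^{-1}$-type singularity. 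On $[\sigma(T),T]$ we integrate against the exponential decay (\ref{pd302})--(\ref{pd303}), which makes the tail uniformly convergent. Summing the two contributions produces $C_\ast$ independent of $T$ and of $\nu\ge\nu_0$.

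The hardest step is the initial-layer piece: the pointwise bound $\|\sqrt\n\dot u\|_{L^2}\le C/\sigma$ from (\ref{pd11}) alone is not integrable over $(0,1)$, and the gain must come from the weighted integrated bounds. The Gagliardo-Nirenberg exponent $\alpha$ has to be chosen so that after the H\"older splitting the residual power of $\sigma^{-1}$ stays strictly less than $1$. This delicate interpolation is the ``time-partitioning with Gagliardo-Nirenberg'' mechanism mentioned in the introduction, and it is exactly what allows the initial-divergence restriction $\nu^{1/2}\|\div u_0\|_{L^2}\le K$ of \cite{DM} to be dropped.
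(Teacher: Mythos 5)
Your Zlotnik framework is sound and essentially equivalent to the paper's: the paper applies Zlotnik directly to $\rho$ along characteristics (so the conclusion is additive, $\rho\le\max\{\rho_0,\overline\zeta\}+N_0$), while you apply it to $y=\log\rho$ (so the conclusion is multiplicative, $\rho\le\tilde\rho\,e^{N_0}$); either version reduces the lemma to an estimate of $\nu^{-1}\int_0^T\|G\|_{L^\infty}\,dt$, and once that quantity is small, both give a valid choice of $\nu_1$. The gap is the estimate itself: your asserted bound $\int_0^T\|G\|_{L^\infty}\,dt\le C_\ast$ with $C_\ast$ \emph{independent of $\nu$} is not achievable with the a priori estimates of Section 3, and the initial-layer Gagliardo--Nirenberg mechanism you gesture at cannot produce it.

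Here is why the route you outline cannot close. Your replacement $\|G\|_{L^2}\le C\|\nabla G\|_{L^2}\le C\|\sqrt\rho\dot u\|_{L^2}$ is correct, but it is \emph{time-singular}: from (\ref{pd11}), $\|\sqrt\rho\dot u\|_{L^2}$ is only $O(\sigma^{-1})$ near $t=0$. Using (\ref{gn12}) with $r=2$ and any $q>2$ gives $\|G\|_{L^\infty}\le C\|G\|_{L^2}^{\alpha}\|\nabla G\|_{L^q}^{\beta}$ with $\alpha=\frac{q-2}{2q-2}$ and $\beta=1-\alpha=\frac{q}{2q-2}\in(\frac12,1)$, and then, after $\|\nabla G\|_{L^q}\le C(\|\sqrt\rho\dot u\|_{L^2}+\|\nabla\dot u\|_{L^2})$, the worst term is $\int_0^{\sigma(T)}\|\sqrt\rho\dot u\|_{L^2}^{\alpha}\|\nabla\dot u\|_{L^2}^{\beta}\,dt$. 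The only available controls are $\int_0^T\sigma^a\|\sqrt\rho\dot u\|_{L^2}^2\,dt\le C$ with $a\ge 1$ (from (\ref{0x10})) and $\int_0^T\sigma^b\|\nabla\dot u\|_{L^2}^2\,dt\le C$ with $b\ge 2$ (from (\ref{pd11})). Whatever weights you distribute, H\"older (with exponents $2/\alpha$, $2/\beta$, $2$) forces a residual factor $\bigl(\int_0^1\sigma^{-(a\alpha+b\beta)}\,dt\bigr)^{1/2}$, and since $a\alpha+b\beta\ge\alpha+2\beta=1+\beta>1$, this factor always diverges. Interpolating via the pointwise bound $\|\sqrt\rho\dot u\|_{L^2}\le C\sigma^{-1}$ instead of the $L^2_t$ one leads to the same conclusion: the residual power of $\sigma^{-1}$ is identically $1$, never $<1$. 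So no choice of the GN exponent makes your initial-layer integral finite, let alone $\nu$-independent.

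The paper's fix is the opposite of what you propose: it replaces the time-singular $\|G\|_{L^2}\le C\|\sqrt\rho\dot u\|_{L^2}$ by the \emph{time-independent} (but $\nu$-dependent) bound $\|G\|_{L^2}\le C\nu$ coming from (\ref{x1000}) (this is where the dependence on $\|\nabla u_0\|_{L^2}$ enters $\nu_1$). With $q=5$ one has $\alpha=\frac38$, $\beta=\frac58$, and the residual power drops to $2\beta/(2-\beta)=\frac{10}{11}<1$, yielding $\int_0^{\sigma(T)}\|G\|_{L^\infty}\,dt\le C\nu^{3/8}$ as in (\ref{pd5500}); on $[\sigma(T),T]$ the exponential decay (\ref{pd302})--(\ref{pd303}) gives $\int_{\sigma(T)}^T\|G\|_{L^\infty}\,dt\le C\nu^{5/6}$ as in (\ref{pd57}). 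Neither piece is $\nu$-independent, but after dividing by $\nu$ you get $O(\nu^{-5/8})+O(\nu^{-1/6})\to 0$, which is all Zlotnik needs. If you want to keep your $\log\rho$ formulation, you should keep the paper's $\nu$-dependent $\|G\|_{L^2}$ estimate, obtain $N_0=K_1\nu^{-5/8}+K_2\nu^{-1/6}$, and choose $\nu_1$ so that $e^{N_0}\le\frac32$; as currently written, the central claim of your paragraph on $C_\ast$ is false and the interpolation step has no valid choice of exponents.
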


\begin{proof}
First, with the defination of $G$ in (\ref{gw}), we can rewrite $(\ref{ns})_1$ as
\be\la{pd53}\ba
D_t \n =g(\n)+h'(t),
\ea\ee
where 
\be\la{pd54}\ba
D_t\n\triangleq\n_t+u \cdot\nabla \n ,\quad
g(\n)\triangleq-\frac{\n}{\nu} ( \n^\ga-\ol{P} ),
\quad h(t)\triangleq-\frac{1}{\nu} \int_0^t \n G ds. 
\ea\ee
For any $0 \le t_1 <t_2 \le \si(T) $, it follows from (\ref{0x1}), (\ref{pd51}) and H\"older's inequality that
\be\la{pd55}\ba 
|h(t_2)-h(t_1)| \le \frac{C}{\nu} \int_0^{\si(T)} \| G\|_{L^\infty} dt.
\ea\ee
Then, we conclude from (\ref{x1000}), (\ref{pd11}), (\ref{gn12}), (\ref{p2}) and H\"older's inequality that
\be\la{pd5500}\ba 
\int_0^{\si(T)} \| G\|_{L^\infty} dt 
& \le C \int_0^{\si(T)} \| G\|_{L^2}^{3/8}
\| \na G\|_{L^5}^{5/8} dt \\
& = C \int_0^{\si(T)} \| G \|_{L^2} ^{3/8} \left(\si^2\|\na G \|^{2}_{L^5} \right)^{5/16} 
\si^{ -5/8 } dt  \\
& \le C \nu ^{3/8} \int_0^{\si(T)} \left(\si^2\|\n^{1/2}\dot u\|^{2}_{L^2} +\si^2\|\na\dot u\|^{2}_{L^2}\right)^{5/16}
\si^{-5/8} dt  \\
& \le C \nu^{3/8} \left(\int_0^{\si(T)} \si^{-10/11}dt\right)^{11/16} \\
& \le C \nu^{3/8},
\ea\ee
which together with (\ref{pd55}) yields
\be\la{pd5511}\ba 
|h(t_2)-h(t_1)| \le K_1 \nu^{-5/8},
\ea\ee
where $K_1$ is a constant depending only on $\mu,\ \ga,\ \|\n_0\|_{L^1\cap L^\infty},\ \| \na u_0\|_{L^2}$ and $E_0$.

We take $N_0$, $N_1$ and $\overline{\zeta}$ in Lemma \ref{zli} as follows:
\be\la{pd5522}\ba
N_0=K_1 \nu^{-5/8}, \quad N_1=0, \quad \overline{\zeta}=(\ol{P})^{1/\ga},
\ea\ee
which implies
\be\la{pd056}\ba
g(\zeta)=-\frac{\zeta}{\nu} ( \zeta^\ga-\ol{P} ) \le -N_1=0 \quad  \mathrm{for\ all\ }
\zeta \ge \overline{\zeta}=(\ol{P})^{1/\ga}.
\ea\ee
Lemma \ref{zli} thus results in
\be\la{pd56}\ba
\sup_{t\in [0,\si(T)]} \|\n\|_{L^\infty} 
& \le \| \n_0\|_{L^\infty} + \left( (\ga-1)E_0 \right)^{1/\ga}
+K_1 \nu^{-5/8}
\le \frac{5}{4} \tilde{\n},
\ea\ee
provided $\nu \geq  \max\left\{ \nu_0, 
\left( \frac{4K_1}{ \tilde{\n} } \right)^{8/5}\right\}$,
where $\nu_0$ is given by (\ref{pd314}).

On the other hand, for $t\in [\si(T),T]$, based on (\ref{pd301}), (\ref{pd302}), (\ref{pd303}), (\ref{gn12}) and (\ref{p2}), we derive that
\be\la{pd57}\ba 
\int_{\si(T)}^{T} \| G\|_{L^\infty} dt
& \le C \int_{\si(T)}^{T} \| G\|_{L^2}^{1/3} \| \na G\|^{2/3}_{L^{4}} dt \\
& \le C \nu^{1/6} \int_{\si(T)}^{T} e^{-\frac{1}{6} \alpha_0 t}
\left( \| \sqrt{\n} \dot u\|_{L^2} + \|\na\dot u\|_{L^2}\right)^{2/3} dt \\
& \le C \nu^{1/6} \int_{\si(T)}^{T} e^{-\frac{1}{6} \alpha_0 t} \| \sqrt{\n} \dot u\|_{L^2}^{2/3}
+ e^{-\frac{1}{2} \alpha_0 t}
\left( e^{\alpha_0 t} \| \na \dot u\|^{2}_{L^2} \right)^{1/3} dt \\
& \le C \nu^{1/6} \left( \int_{\si(T)}^{T} e^{-\frac{1}{4} \alpha_0 t} dt \right)^{2/3}
\left( \int_{\si(T)}^{T} \| \sqrt{\n} \dot u\|^2_{L^2} dt \right)^{1/3} \\
& \quad + C \nu^{1/6} \left( \int_{\si(T)}^{T} e^{-\frac{3}{4} \alpha_0 t} dt \right)^{2/3}
\left( \int_{\si(T)}^{T} e^{\alpha_0 t} \| \na \dot u\|^{2}_{L^2} dt \right)^{1/3} \\
& \le C \nu^{5/6}.
\ea\ee

Therefore, (\ref{pd57}), (\ref{pd51}), (\ref{pd54}) and H\"older's inequality show that for all $\si(T)\le t_1 < t_2\le T$, 
\be\la{pd58}\ba  
|h(t_2)-h(t_1)| 
\le \frac{C}{\nu} \int_{t_1}^{t_2} \| G(\cdot,t)\|_{L^\infty}dt
\le K_2 \nu^{-1/6},
\ea\ee
where $K_2$ is a constant depending only on $\mu,\ \ga,\ \|\n_0\|_{L^1\cap L^\infty}$ and $E_0$.
We choose $N_1$, $N_0$ and $\overline{\zeta}$ in Lemma \ref{zli} as follows:
\be\la{pd59}\ba
N_0=K_2 \nu^{-1/6},\quad  N_1=0,\quad 
\overline{\zeta}=(\ol{P})^{1/\ga}.
\ea\ee
Moreover, we set
\be\ba\la{pdd}
\nu_1 := \max \left\{ \nu_0,
\left(\frac{4K_1}{\tilde{\n}}\right)^{8/5},
\left(\frac{4K_2}{\tilde{\n}}\right)^{6} \right\},
\ea\ee
which together with (\ref{pd056}), (\ref{pd56}) and Lemma \ref{zli}
leads to when $\nu\geq \nu_1$
\be\la{pd511}\ba
\sup_{t\in	[\si(T),T]} \|\n\|_{L^\infty} \le \frac{5}{4} \tilde{\n}
+K_2 \nu^{-1/6}  \le \frac{3}{2} \tilde{\n}.
\ea\ee
Combining this with (\ref{pd56}) yields (\ref{pd52}) and completes the proof of Lemma \ref{g5}. 
\end{proof}

\section{A Priori Estimates \uppercase\expandafter{\romannumeral2}: Higher Order Estimates}
In this section, we always assume $\nu \ge \nu_1$ is a fixed constant, where $\nu_1$ is determined in Lemma \ref{g5}, 
and let $(\n,u)$ be a smooth solution of (\ref{ns})--(\ref{i3}) on $\T^2 \times (0,T]$ satisfying (\ref{pd51}). 
Next, we will derive the higher-order estimates for the smooth solution $(\n,u)$, which are similar to the arguments in \cite{HLX2,LLL}. 

First, we assume that the initial data $(\n_0,u_0)$ satisfy (\ref{ssol1}).
\begin{lemma}\la{s21}
There exists a positive constant $C$ depending only on  
$T$, $\ga$, $\mu$, $\nu$, $\| \rho_0 \|_{L^1 \cap L^\infty }$, $E_0$ and
$\| u_0 \|_{H^1}$, such that
\be\la{s411} \ba
\sup_{0\le t\le T} \si \int\n|\dot u|^2dx+\int_0^{ T} \si \|\na\dot u\|^2_{L^2}dt\le C.
\ea\ee	

\end{lemma}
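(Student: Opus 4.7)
The plan is to revisit inequality (\ref{pd118}), which was already derived in the proof of Lemma \ref{g1}, and this time multiply it by $\sigma$ rather than $\sigma^2$. The weaker weight makes the coefficient of $\sigma'$ more singular near $t=0$, but since Lemma \ref{s21} allows the constant $C$ to depend on $\nu$ and on $\|u_0\|_{H^1}$, I can absorb this using the energy-type bound (\ref{x1000}) in Lemma \ref{l4} which gives $\int_0^T A_2^2\,dt \le C(1+\nu\|\div u_0\|_{L^2}^2+\|\nabla^\perp u_0\|_{L^2}^2)$, a quantity now allowed on the right-hand side.

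Concretely, I would write (\ref{pd118}) schematically as $E'(t)+\tfrac{3\mu}{4}\|\nabla\dot u\|_{L^2}^2 + \tfrac{\nu-\mu}{\nu^2}\|\dot G\|_{L^2}^2 \le R(t)$ with
\[
E(t)=\int \rho|\dot u|^2\,dx+\frac{2(\nu-\mu)}{\nu}\int G\,\partial_i u\cdot\nabla u^i\,dx,
\]
and $R(t)$ the remainder terms involving $\|\div u\|_{L^2}^2$, $\|\sqrt\rho\dot u\|_{L^2}^2\|\nabla u\|_{L^2}^2$, $\|G\|_{L^4}^4$, $\|P-\bar P\|_{L^4}^4$, and $\|\nabla u\|_{L^4}^4$. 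Multiplying by $\sigma$ and integrating over $[0,T]$ yields
\[
\sigma(T)E(T)+\int_0^T\sigma\|\nabla\dot u\|_{L^2}^2\,dt \lesssim \int_0^T\sigma' E(t)\,dt+\int_0^T\sigma R(t)\,dt.
\]
For the first integral on the right, $\sigma'$ is supported on $[0,\min\{T,1\}]$, so using the cross-term bound (\ref{pd120}) together with Lemma \ref{l4} gives $\int_0^T\sigma' E\,dt \le C$. For the second, I will control each piece as follows: (i) $\int_0^T\sigma\|\sqrt\rho\dot u\|_{L^2}^2\|\nabla u\|_{L^2}^2\,dt \le (\sup_t A_1^2)\int_0^T A_2^2\,dt \le C$ by (\ref{x1000}); (ii) $\int_0^T\sigma\|G\|_{L^4}^4\,dt \le C\int_0^T\|G\|_{L^2}^2\|\nabla G\|_{L^2}^2\,dt \le C\sup_t\|G\|_{L^2}^2\int_0^T\|\sqrt\rho\dot u\|_{L^2}^2\,dt \le C$ via Gagliardo-Nirenberg (\ref{gn11}), Lemma \ref{estg}, and (\ref{x1000}); (iii) analogous treatment for $\|\nabla u\|_{L^4}^4$ using (\ref{dc1}) and the curl/divergence decomposition in the spirit of (\ref{pd122}), with a similar bound for $\|P-\bar P\|_{L^4}^4$ using (\ref{pd301}) or simply the $L^\infty$ bound on $\rho$.

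To recover the two conclusions in (\ref{s411}), coerciveness of the left-hand side is needed: applying (\ref{pd120}) again to the cross term inside $E(T)$ shows that $\sigma(T) E(T) \ge \tfrac{3}{4}\sigma(T)\|\sqrt\rho\dot u\|_{L^2}^2 - C$, after which the inequality above yields both $\sup_{0\le t\le T}\sigma\|\sqrt\rho\dot u\|_{L^2}^2 \le C$ and $\int_0^T\sigma\|\nabla\dot u\|_{L^2}^2\,dt\le C$.

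The main obstacle I anticipate is bookkeeping: the original proof of Lemma \ref{g1} relied on $\sigma^2$ exactly because it made the first-term $\sigma'\sigma$ in (\ref{pd119}) integrable through $\sup\sigma^2\|\sqrt\rho\dot u\|_{L^2}^2<\infty$. With weight $\sigma$ instead, the analogous step requires the full strength of (\ref{x1000})—which introduces the dependence on $\|u_0\|_{H^1}$ and on $\nu$—and care must be taken to keep track of which constants now depend on $T$ via time-integrated $\nu$-dependent bounds. This is the reason the hypotheses on the constant in (\ref{s411}) are weaker than those for (\ref{pd11}).
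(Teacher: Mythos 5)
Your proposal is correct and follows essentially the same route as the paper: multiply (\ref{pd118}) by $\si$ rather than $\si^2$, control the cross term with (\ref{pd120}), and absorb the unweighted time integrals using the $\nu$- and $\|u_0\|_{H^1}$-dependent bounds from (\ref{x1000}). The paper merely packages the preliminary facts as separate displays (\ref{s418})--(\ref{s419}) and rewrites (\ref{pd118}) as (\ref{s417}) before weighting by $\si$, but the argument is identical in substance.
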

\begin{proof}
First, it follows from (\ref{pt1}), (\ref{0x1}), (\ref{x1000}) and (\ref{pd51}) that
\be\la{s418} \ba
\sup_{0\le t\le T} \left( \| \n \|_{L^\infty} + \| u\|_{H^1} \right) + \int_0^T \left( \| \na u\|^2_{L^2}+ \| \n^{1/2} \dot{u} \|^2_{L^2} \right)dt \le C.
\ea \ee
On the other hand, we deduce from (\ref{pd118}) and (\ref{s418}) that
\be\la{s417}\ba
&\left(\int\rho|\dot{u}|^2dx + \frac{2(\nu-\mu)}{\nu} \int G \p_i u \cdot \na u^i dx \right)_t
+ \frac{3\mu}{4} \| \na \dot{u} \|_{L^2}^2 \\
& \le C \| \na u \|^2_{L^2} + C \| \sqrt{\n} \dot{u} \|^2_{L^2}
+ \frac{C}{\nu^2} \| P-\ol{P} \|^4_{L^4} + C \| \na u \|^4_{L^4}.
\ea\ee
Moreover, similar to (\ref{pd18}) and (\ref{pd19}), we derive
\be\la{s419} \ba
\int_0^{T}\|\na u\|^4_{L^4}dt  \le C.
\ea\ee
Multiplying (\ref{s417}) by $\si$ and integrating the resulting equation over $(0,T)$,
we obtain (\ref{s411}) after using (\ref{pd120}), (\ref{s418}) and (\ref{s419}).
\end{proof}
\begin{lemma}\la{s22}
For any $2 < p < \infty$, there exists a positive constant $C$ depending only on 
$T,\ p,\ \ga,\ \mu,\ \nu,\ E_0,\ \| \rho_0 \|_{L^1 \cap L^\infty }$ and 
$\| u_0 \|_{H^1}$, such that
\be\la{s421} \ba
&\int_0^T \left(\|G\|_{L^\infty}+\|\na G\|_{L^p}+\|\o\|_{L^\infty}+\|\na \o\|_{L^p}+\| \rho \dot u\|_{L^p}\right)^{1+1 /p}dt \\& +\int_0^Tt\left(\|\na G\|_{L^p}^2+\|\na \o\|_{L^p}^2+\|   \dot u\|_{H^1}^2\right) dt \le C. \ea\ee
\end{lemma}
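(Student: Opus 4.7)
\textbf{Proof plan for Lemma \ref{s22}.} The overall strategy is to reduce every quantity on the left-hand side to norms of $\dot u$, and then exploit the $\sigma$-weighted bounds from Lemma \ref{s21} together with the unweighted bound $\int_0^T \|\sqrt{\n}\dot u\|_{L^2}^2\,dt \le C$ from \eqref{s418}.

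\textbf{Step 1 (Reduction to $\|\rho\dot u\|_{L^p}$).} By the elliptic estimate \eqref{p2}, $\|\na G\|_{L^p}+\|\na\omega\|_{L^p} \le C\|\rho\dot u\|_{L^p}$. Since $G$ and $\omega$ have zero mean on $\T^2$, the Gagliardo-Nirenberg inequality \eqref{gn12} with $r=2$, $q=p$ yields $\|G\|_{L^\infty}+\|\omega\|_{L^\infty} \le C(\|G\|_{L^2}+\|\omega\|_{L^2})^{\theta}(\|\na G\|_{L^p}+\|\na\omega\|_{L^p})^{1-\theta}$ for some $\theta\in(0,1)$. The $L^2$ norms of $G$ and $\omega$ are bounded by \eqref{0x1} and the definition \eqref{gw}, so it suffices to control suitable time integrals of $\|\rho\dot u\|_{L^p}$.

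\textbf{Step 2 (Reduction to $\dot u$ norms).} From \eqref{s418}, $\rho\in L^\infty$, so $\|\rho\dot u\|_{L^p}\le C\|\dot u\|_{L^p}$. By Sobolev embedding in 2D, $\|\dot u\|_{L^p}\le C\|\dot u\|_{H^1}$, and by Lemma \ref{pt} applied to $\dot u$,
\begin{equation*}
\|\dot u\|_{H^1}^2 \le C\bigl(\|\sqrt{\n}\dot u\|_{L^2}^2+\|\na\dot u\|_{L^2}^2\bigr).
\end{equation*}

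\textbf{Step 3 (The $t$-weighted integral with exponent $2$).} From Step 2, each of $\|\na G\|_{L^p}^2$, $\|\na\omega\|_{L^p}^2$ and $\|\dot u\|_{H^1}^2$ is bounded by $C(\|\sqrt{\n}\dot u\|_{L^2}^2+\|\na\dot u\|_{L^2}^2)$. Splitting $[0,T]$ at $\min(1,T)$, one has $t=\sigma$ on $[0,\min(1,T)]$ and $t\le T$ on $[\min(1,T),T]$ where $\sigma\equiv 1$. Using $\sup_t\sigma\|\sqrt{\n}\dot u\|_{L^2}^2\le C$ and $\int_0^T\sigma\|\na\dot u\|_{L^2}^2\,dt\le C$ from Lemma \ref{s21}, together with \eqref{s418} on the second subinterval, the $t$-weighted integral is bounded.

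\textbf{Step 4 (The unweighted integral with exponent $1+1/p$).} This is the main difficulty. Writing $\dot u = (\dot u-\overline{\dot u})+\overline{\dot u}$ and applying Gagliardo-Nirenberg \eqref{gn11} to the mean-zero part together with $|\overline{\dot u}|\le C\|\dot u\|_{L^2}$ gives
\begin{equation*}
\|\dot u\|_{L^p} \le C\|\dot u\|_{L^2}^{2/p}\|\na\dot u\|_{L^2}^{1-2/p}+C\|\dot u\|_{L^2}.
\end{equation*}
Raising to the power $(p+1)/p$ and invoking Lemma \ref{pt} to rewrite $\|\dot u\|_{L^2}^2\le C\|\sqrt{\n}\dot u\|_{L^2}^2+C\|\na\dot u\|_{L^2}^2$, I would apply Young's inequality with exponents chosen so that one factor becomes exactly $\sigma\|\na\dot u\|_{L^2}^2$ (which is in $L^1_t$) or $\|\sqrt{\n}\dot u\|_{L^2}^2$ (also in $L^1_t$ by \eqref{s418}), and the residual $\sigma$-weight is of the form $\sigma^{-\alpha}$ with $\alpha<1$ so that it is integrable near $t=0$. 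A direct computation shows this works precisely because the exponent $(p+1)/p\in(1,2)$ is small enough to keep the $\sigma$-singularities sub-critical.

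\textbf{Main obstacle.} The first integral carries no $\sigma$-weight, yet the only information available for $\|\na\dot u\|_{L^2}$ is the $\sigma$-weighted bound $\int_0^T\sigma\|\na\dot u\|_{L^2}^2\,dt\le C$, which by itself is too weak to control $\int_0^T\|\na\dot u\|_{L^2}^{(p+1)/p}\,dt$ near $t=0$. The resolution is to use the unweighted bound $\int_0^T\|\sqrt{\n}\dot u\|_{L^2}^2\,dt\le C$ (from \eqref{s418}) in tandem with the $\sigma$-weighted bounds, exploiting Lemma \ref{pt} so that the time-singular part of $\|\dot u\|_{L^2}$ is absorbed into $\|\sqrt{\n}\dot u\|_{L^2}$; the carefully tuned Young's inequality then produces only an integrable power of $\sigma^{-1}$, which is exactly why the exponent $1+1/p$ (and no larger) appears in the statement.
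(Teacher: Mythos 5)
Your Steps 1–3 follow the paper's argument closely: reduce $\|\na G\|_{L^p}, \|\na\o\|_{L^p}, \|G\|_{L^\infty}, \|\o\|_{L^\infty}$ to $\|\n\dot u\|_{L^p}$ via \eqref{p2} and \eqref{gn12}, and handle the $t$-weighted exponent-2 integral from Lemma \ref{s21} and \eqref{s418}. That part is sound.

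Step 4 has a genuine gap, and it sits exactly where you flagged the ``main obstacle.'' Your starting interpolation is the mean-corrected Gagliardo–Nirenberg bound
\begin{equation*}
\|\dot u\|_{L^p}\le C\|\dot u\|_{L^2}^{2/p}\|\na\dot u\|_{L^2}^{1-2/p}+C\|\dot u\|_{L^2}.
\end{equation*}
Since the only control on $\|\dot u\|_{L^2}$ is Lemma \ref{pt}, i.e.\ $\|\dot u\|_{L^2}\le C\|\sqrt{\n}\dot u\|_{L^2}+C\|\na\dot u\|_{L^2}$, the worst contribution after substitution is a pure power $\|\na\dot u\|_{L^2}^{(2/p)+(1-2/p)}=\|\na\dot u\|_{L^2}$, and raising to $(p+1)/p$ gives $\|\na\dot u\|_{L^2}^{(p+1)/p}$ with \emph{no} compensating factor of $\|\sqrt{\n}\dot u\|_{L^2}$. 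The only available time integrability of $\na\dot u$ is $\int_0^T\sigma\|\na\dot u\|_{L^2}^2\,dt\le C$. Writing $\|\na\dot u\|_{L^2}^{(p+1)/p}=(\sigma\|\na\dot u\|_{L^2}^2)^{(p+1)/(2p)}\sigma^{-(p+1)/(2p)}$ and applying H\"older in time leaves $\int_0^T\sigma^{-(p+1)/(p-1)}\,dt$, which diverges for every $p>2$ since $(p+1)/(p-1)>1$. So the sub-criticality claim at the end of Step 4 is incorrect: with this interpolation the singularity at $t=0$ is actually super-critical.

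The paper avoids this by interpolating the \emph{density-weighted} quantity directly: $\|\n\dot u\|_{L^p}\le \|\n\dot u\|_{L^2}^{2(p-1)/(p^2-2)}\|\n\dot u\|_{L^{p^2}}^{p(p-2)/(p^2-2)}$, then uses $\n\in L^\infty$ and Sobolev to get $\|\n\dot u\|_{L^{p^2}}\le C\|\dot u\|_{H^1}$, and only then applies Lemma \ref{pt} to $\|\dot u\|_{H^1}$. The key structural gain is that the $L^2$-end of the interpolation is $\|\n\dot u\|_{L^2}\le C\|\sqrt{\n}\dot u\|_{L^2}$, so even in the worst case the bound has the form $\|\sqrt{\n}\dot u\|_{L^2}^{\theta}\|\na\dot u\|_{L^2}^{1-\theta}$ with a \emph{guaranteed} positive power of $\|\sqrt{\n}\dot u\|_{L^2}$ (which is in $L^2_t$ without any $\sigma$-weight by \eqref{s418}). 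With the specific endpoint $L^{p^2}$ the resulting Young's inequality yields a residual $t^{-(p^3-p^2-2p)/(p^3-p^2-2p+2)}$, whose exponent is strictly less than $1$ for all $p>2$, hence integrable. The fix for your proof is therefore to replace the unweighted Gagliardo–Nirenberg interpolation of $\dot u$ with this weighted interpolation of $\n\dot u$ between $L^2$ and $L^{p^2}$.
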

\begin{proof}
First, by using H\"{o}lder's inequality and (\ref{pt1}) we obtain
\be\la{s422} \ba
\| \rho \dot u\|_{L^p} & \le
C\| \rho \dot u\|_{L^2}^{2(p-1)/(p^2-2)}\|\dot u\|_{L^{p^2}}^{p(p-2)/(p^2-2)}\\ & \le
C\| \rho \dot u\|_{L^2}^{2(p-1)/(p^2-2)}\|\dot u\|_{H^1}^{p(p-2)/(p^2-2)}\\ & \le
C\| \rho^{1/2}  \dot u\|_{L^2} +C\| \rho \dot u\|_{L^2}^{2(p-1)/(p^2-2)}\|\na \dot u\|_{L^2}^{p(p-2)/(p^2-2)}, \ea\ee
which together with (\ref{s411}), (\ref{s422}) and (\ref{pt1}) leads to
\be\la{s423} \ba
&\int_0^T \left(\| \rho \dot u\|^{1+1 /p}_{L^p}+t\|  \dot u\|^2_{H^1}\right) dt\\
&\le C+C \int_0^T\left( \| \rho^{1/2}  \dot u\|_{L^2}^2 +  t\|\na \dot u\|_{L^2}^2+ 
t^{-(p^3-p^2-2p)/(p^3-p^2-2p+2)} \right)dt \\ 
&\le C.\ea\ee
Then, it follows from (\ref{s418}) and Sobolev embedding that
\be\la{s424}\ba &\|\div u\|_{L^\infty}+\|\o\|_{L^\infty} +  \|G\|_{L^\infty} \\ 
&\le C +C \|\na G\|_{L^p} +C \|\na \o\|_{L^p}\\ &\le C +C \|\n\dot u\|_{L^p}, \ea\ee 
which combined with (\ref{p2}), (\ref{s423}) and (\ref{s424}) yields (\ref{s421})
and finishes the proof of Lemma \ref{s22}.
\end{proof}
\begin{lemma}\la{s23}
There exists a positive constant $C$ depending only on 
$T,\ q,\ \ga,\  \mu,\ \nu$, $E_0$, $\| \rho_0 \|_{L^1 \cap W^{1,q}}$ and $\| u_0 \|_{H^1}$, such that
\be\la{s431}\ba
&\sup_{0\le t\le T}\left(\norm[W^{1,q}]{ \rho}+\| u\|_{H^1}+t^{1/2}\| \n^{1/2} u_t \|_{L^2}+t^{1/2} \| u\|_{H^2} + \| \n_t \|_{L^2} \right) \\
& \quad + \int_0^T \left( \|\nabla^2 u\|^{2}_{L^2}+\|\nabla^2 u\|^{(q+1)/q}_{L^q}+t \|\nabla^2 u\|_{L^q}^2+\| \n^{1/2} u_t \|^2_{L^2}+t\| u_t\|_{H^1}^2 \right) dt\le C. \ea\ee
\end{lemma}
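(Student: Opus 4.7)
\textbf{Proof proposal for Lemma \ref{s23}.}

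My plan is to first close an estimate on $\|\na\n\|_{L^q}$ by a logarithmic Grönwall argument, then extract all remaining bounds from elliptic regularity for the Lam\'e system together with $u_t=\dot u-u\cdot\na u$ and the transport equation. The input I will lean on most heavily is the packet of bounds from Lemmas \ref{s21}--\ref{s22}: the $\nu$-independent portions (\ref{s418}), the weighted material-derivative bound (\ref{s411}), and (\ref{s421}), which supplies integrability in time of $\|G\|_{L^\infty}$, $\|\o\|_{L^\infty}$ and $\|\n\dot u\|_{L^p}$ for any $p>2$.

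First I would differentiate $(\ref{ns})_1$, multiply by $q|\na\n|^{q-2}\na\n$ and integrate to obtain
\bnn
\frac{d}{dt}\|\na\n\|_{L^q}\le C\|\na u\|_{L^\infty}\|\na\n\|_{L^q}+C\|\n\|_{L^\infty}\|\na\div u\|_{L^q}.
\ennn
Since $\nu\div u=G+(P-\ol P)$ and $P=\n^\ga$, the second term is controlled by $C(\|\na G\|_{L^q}+\|\na\n\|_{L^q})$, and Lemma \ref{estg} gives $\|\na G\|_{L^q}+\|\na\o\|_{L^q}\le C\|\n\dot u\|_{L^q}$. Coupled with the div--curl estimate \eqref{dc1}, this also yields the key elliptic bound
\bnn
\|\na^2 u\|_{L^q}\le C(\|\n\dot u\|_{L^q}+\|\na\n\|_{L^q}).
\ennn
Feeding this into Beale--Kato--Majda (Lemma \ref{bkm}) gives
\bnn
\|\na u\|_{L^\infty}\le C(\|\div u\|_{L^\infty}+\|\o\|_{L^\infty})\bigl(1+\log(e+\|\n\dot u\|_{L^q}+\|\na\n\|_{L^q})\bigr)+C.
\ennn
Setting $y(t)=e+\|\na\n\|_{L^q}$ and combining these yields a differential inequality of the form
\bnn
y'(t)\le C\bigl(1+h(t)\log y(t)+h(t)\log(e+k(t))+k(t)\bigr)\,y(t),
\ennn
with $h=\|\div u\|_{L^\infty}+\|\o\|_{L^\infty}$ and $k=\|\n\dot u\|_{L^q}$. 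The hard part is exactly this logarithmic Grönwall step: I must verify that $h$, $k$, and the cross term $h\log(e+k)$ are all in $L^1(0,T)$. Both $h\in L^{1+1/p}$ and $k\in L^{1+1/q}$ are provided by (\ref{s421}); for the cross term I would use $\log(e+k)\le C_\epsilon(e+k)^\epsilon$ with small $\epsilon$ and H\"older so that $h(e+k)^\epsilon\in L^1$. Standard Grönwall then gives $\|\na\n\|_{L^q}\le C$ on $[0,T]$.

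Once $\|\na\n\|_{L^q}$ is controlled, the rest follows mechanically. Applying the elliptic estimate for the Lam\'e system $\mu\Delta u+(\mu+\lm)\na\div u=\na P+\n\dot u$ in $L^2$ and $L^q$ gives
\bnn
\|\na^2 u\|_{L^2}\le C(\|\sqrt\n\dot u\|_{L^2}+\|\na\n\|_{L^2}),\qquad \|\na^2 u\|_{L^q}\le C(\|\n\dot u\|_{L^q}+\|\na\n\|_{L^q}),
\ennn
from which $\int_0^T\|\na^2 u\|_{L^2}^2\,dt$, $\int_0^T\|\na^2 u\|_{L^q}^{(q+1)/q}\,dt$ and the weighted bound $\int_0^T t\|\na^2 u\|_{L^q}^2\,dt$ follow using (\ref{s418}), (\ref{s421}), (\ref{s411}), and the Sobolev embedding $H^1\hookrightarrow L^q$ in 2D (to estimate $\|\n\dot u\|_{L^q}$ by $\|\sqrt\n\dot u\|_{L^2}+\|\na\dot u\|_{L^2}$). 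The sup bound $t^{1/2}\|u\|_{H^2}\le C$ comes directly from the elliptic estimate combined with $t^{1/2}\|\sqrt\n\dot u\|_{L^2}\le C$ from (\ref{s411}).

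Finally, for the remaining pieces I would write $\n_t=-u\cdot\na\n-\n\div u$ and use $\|u\|_{L^{2q/(q-2)}}\le C\|u\|_{H^1}$ together with the just-obtained $\|\na\n\|_{L^q}$ bound to get $\|\n_t\|_{L^2}\le C$. For $u_t=\dot u-u\cdot\na u$, the bound $t^{1/2}\|\sqrt\n u_t\|_{L^2}\le C$ follows from $t^{1/2}\|\sqrt\n\dot u\|_{L^2}\le C$ and Gagliardo--Nirenberg applied to $\|\na u\|_{L^4}\le C\|\na u\|_{L^2}^{1/2}\|\na^2 u\|_{L^2}^{1/2}\le Ct^{-1/4}$; likewise $\int_0^T\|\sqrt\n u_t\|_{L^2}^2\,dt$ reduces to $\int_0^T\|\sqrt\n\dot u\|_{L^2}^2\,dt+\int_0^T\|u\|_{L^4}^2\|\na u\|_{L^4}^2\,dt$, each of which is bounded. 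For $\int_0^T t\|u_t\|_{H^1}^2\,dt$ I estimate $\|u\cdot\na u\|_{H^1}\le C\|u\|_{H^2}^2$ (in 2D via $H^2\hookrightarrow L^\infty\cap W^{1,4}$), so $t\|u\cdot\na u\|_{H^1}^2\le C(t^{1/2}\|u\|_{H^2})^2\|u\|_{H^2}^2$ integrates against $\int_0^T\|u\|_{H^2}^2\,dt<\infty$, while $\int_0^T t\|\dot u\|_{H^1}^2\,dt\le C$ by (\ref{s411}) and Poincar\'e (\ref{pt1}). Collecting the pieces yields (\ref{s431}).
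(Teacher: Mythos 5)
Your proposal is correct and takes essentially the same route as the paper: a logarithmic Gr\"onwall argument for $\|\na\n\|_{L^q}$ via Lemma~\ref{bkm} and the elliptic bound $\|\na^2 u\|_{L^q}\le C(\|\n\dot u\|_{L^q}+\|\na\n\|_{L^q})$, followed by the decomposition $u_t=\dot u-u\cdot\na u$ and the transport equation for the remaining pieces. The only notable variation is in closing the log-Gr\"onwall step: the paper first bounds $\|\div u\|_{L^\infty}+\|\o\|_{L^\infty}\le C+C\|\n\dot u\|_{L^q}^{q/(2(q-1))}$ by Gagliardo--Nirenberg (cf.\ (\ref{s435})), which, since $q/(2(q-1))<1$, folds the entire coefficient into the single $L^1$ function $h(t)=1+\|\n\dot u\|_{L^q}$, whereas you keep $h$ and $k$ separate and dispose of the cross term $h\log(e+k)$ via $\log(e+k)\le C_\epsilon(e+k)^\epsilon$ and H\"older---both close.
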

\begin{proof}
First, differentiating $(\ref{ns})_1$ with respect to $x$ and multiplying the resulting equation by $q |\nabla\n|^{q-2} \na \n $, we infer
\be\la{s432} \ba
& (|\nabla\n|^q)_t + \text{div}(|\nabla\n|^qu)+ (q-1)|\nabla\n|^q\text{div}u  \\
&+ q|\nabla\n|^{q-2} \p_i\n \p_i u^j \p_j\n +
q\n|\nabla\n|^{q-2}\p_i\n  \p_i \text{div}u = 0.\ea \ee 
Integrating (\ref{s432}) over $\T^2$ implies 
\be\la{s433} \ba
\frac{d}{dt} \norm[L^q]{\nabla\n}  
&\le C \norm[L^{\infty}]{\nabla u}  \norm[L^q]{\nabla\n} +C\|\na^2 u\|_{L^q} \\ 
&\le C(1+\norm[L^{\infty}]{\nabla u} ) \norm[L^q]{\nabla\n}+C \|\n\dot u\|_{L^q}, \ea\ee 
where we have used the following simple fact:
\be\la{s434} \ba
\|\na^2 u\|_{L^q} &\le C(\|\na \div u\|_{L^q}+\|\na \o\|_{L^q}) \\
&\le C (\| \nabla G\|_{L^q}+ \|\nabla P \|_{L^q})+ C\|\na \o\|_{L^q} \\
&\le C\|\n\dot u\|_{L^q} + C  \|\nabla \n \|_{L^q} , \ea \ee 
due to  (\ref{p2}) and (\ref{dc1}).

On the other hand, it follows from (\ref{gn11}) and (\ref{s418}) that
\be\la{s435}\ba &\|\div u\|_{L^\infty}+\|\o\|_{L^\infty} \\ &\le C +C \|\na G\|_{L^q}^{q/(2(q-1))} +C \|\na \o\|_{L^q}^{q/(2(q-1))}\\ &\le C +C \|\n\dot u\|_{L^q}^{q/(2(q-1))} , \ea\ee 
which together with Lemma \ref{bkm} and (\ref{s434}) yields that
\be\la{s436}\ba   \|\na u\|_{L^\infty }  
&\le C\left(\|{\rm div}u\|_{L^\infty }+ \|\o\|_{L^\infty } \right)\log(e+\|\na^2 u\|_{L^q}) +C\|\na u\|_{L^2} +C \\
&\le C\left(1+\|\n\dot u\|_{L^q}^{q/(2(q-1))}\right)\log(e+\|\rho \dot u\|_{L^q} +\|\na \rho\|_{L^q}) +C\\
&\le C\left(1+\|\n\dot u\|_{L^q} \right)\log(e+ \|\na \rho\|_{L^q}) . \ea\ee
Set
\bnn f(t):=e+\|\na \rho\|_{L^q},\quad h(t):=1+\|\n\dot u\|_{L^q}. \enn
Combining (\ref{s433}) and (\ref{s436}), we obtain
\bnn f'(t)\le Ch(t)f(t)\log f(t), \enn 
due to $f(t)\ge e$ and $h(t)\ge 1$, which yields that
\be\la{s437} (\log f(t))'\le Ch(t)\log f(t).\ee 
Subsequently, it follows from (\ref{s421}) that
\be\la{s438} \ba 
\int_0^T \| \rho \dot u\|_{L^q}^{1+1/q}  dt \le C.\ea\ee
Thus we deduce from (\ref{s437}), (\ref{s438}) and Gr\"onwall's inequality that
\be\la{s439} \ba \sup\limits_{0\le t\le T}\|\nabla
\rho\|_{L^q}\le  C, \ea\ee
which together with (\ref{s411}), (\ref{s421}), (\ref{s434}), (\ref{s438}) and (\ref{s439}) leads to
\be \la{s4310} \ba \sup_{0\le t\le T} t^{1/2} \| \na^2 u\|_{L^2} + \int_0^T \left(\|\nabla^2 u\|^{2}_{L^2}+\|\nabla^2 u\|^{(q+1)/q}_{L^q}+t \|\nabla^2 u\|_{L^q}^2 \right) dt \le C. \ea \ee

Moreover, by using $(\ref{ns})_1$, (\ref{s418}) and (\ref{s439}), we derive
\bnn  
\| \n_t\|_{L^2}\le
C\|u\|_{L^{2q/(q-2)}}\|\nabla \n \|_{L^q}+C\|\n\|_{L^\infty} \|\nabla u\|_{L^2} \le C,\enn
which implies 
\be\la{s4311} \ba  \sup_{0\le t\le T} \| \n_t\|_{L^2} \le C. \ea\ee

Finally, it follows from (\ref{s418}) and H\"older's inequality that
\be \la{s4312}\ba \int\rho|u_t|^2dx 
&\le \int\rho| \dot u |^2dx+\int \n |u\cdot\na u|^2dx \\
&\le \int\rho| \dot u |^2dx+C \| u \|_{L^4}^2 \| \na u\|_{L^4}^2 \\ 
&\le \int\rho| \dot u |^2dx+C\| \na^2 u\|_{L^2}^2.\ea \ee 
Similarly, we also have \be\la{s4313}\ba \|\nabla u_t\|_{L^2}^2 
&\le \| \nabla \dot u \|_{L^2}^2+ \| \nabla(u\cdot\nabla u)\|_{L^2}^2  \\ 
&\le \|\nabla \dot u\|_{L^2}^2+ \|u\|_{L^{2q/(q-2)}}^2\|\nabla^2u \|_{L^q}^2+ \| \nabla u \|_{L^4}^4 \\ 
&\le \|\nabla \dot u\|_{L^2}^2+C\|\nabla^2u \|_{L^q}^2+ \| \nabla u \|_{L^4}^4.\ea\ee 
Consequently, using (\ref{pt1}), (\ref{s411}), (\ref{s4310}), (\ref{s4312}) and (\ref{s4313}) yields
\be \la{s4314} \ba  \sup_{0\le t\le T} t^{1/2}\| \n^{1/2} u_t \|_{L^2} + \int_0^T \| \n^{1/2} u_t \|^2_{L^2}+ t\|u_t\|_{H^1}^2 dt \le C. \ea\ee

Combining (\ref{s418}), (\ref{s439}), (\ref{s4310}), (\ref{s4311}) and (\ref{s4314}) leads to (\ref{s431}) and hence we finish the proof of Lemma \ref{s23}.
\end{proof}

From now on, we assume that the initial data $(\n_0,u_0)$ satisfy (\ref{csol1}) and the compatibility condition (\ref{csol2}).
\begin{lemma}\la{c21}
There exists a positive constant $C$ depending only on  
$T,\ \ga,\ \mu,\ \nu,\ E_0$, $\| \rho_0 \|_{L^1 \cap L^\infty }$,
$\| u_0 \|_{H^1}$ 
and $\|g_2\|_{L^2}$, such that
\be\la{c411} \ba
\sup_{0\le t\le T}
\int\n|\dot u|^2dx
+\int_0^{ T}\int|\na\dot u|^2dxdt\le C.
\ea\ee

\end{lemma}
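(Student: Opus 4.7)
\emph{Proof sketch.} I would revisit the differential inequality \eqref{pd118} and integrate it over $(0,T)$ \emph{without} the time-weight $\si^2$, which is now possible thanks to the compatibility condition \eqref{csol2}. Evaluating $(\ref{ns})_2$ at $t=0$ and invoking \eqref{csol2}, one finds
\[
\n_0 \dot u(\cdot,0) = \mu \Delta u_0 + (\mu+\lm)\na\div u_0 - \na P(\n_0) = -\n_0^{1/2} g_2,
\]
so that the initial quantity $\int \n |\dot u|^2\, dx \big|_{t=0} = \|g_2\|_{L^2}^2$ is finite. This is the essential ingredient that previously forced the weight $\si^2$ in Lemma \ref{g1}.

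Next I would bound all right-hand side terms of \eqref{pd118} by integrable quantities. From Lemma \ref{s23} we already have $\sup_{[0,T]}\|\na u\|_{L^2}^2 + \int_0^T \|\na^2 u\|_{L^2}^2\, dt \le C$, so the Gagliardo-Nirenberg inequality $\|\na u\|_{L^4}^4 \le C\|\na u\|_{L^2}^2\|\na^2 u\|_{L^2}^2$ yields $\int_0^T \|\na u\|_{L^4}^4\, dt \le C$. Analogous reasoning, combined with Lemma \ref{estg} and $\int_0^T \|\sqrt{\n}\dot u\|_{L^2}^2\, dt \le C$ from \eqref{x1000}, produces $\int_0^T \nu^{-2}\|G\|_{L^4}^4\, dt \le C$, while $\nu^{-2}\|P-\ol{P}\|_{L^4}^4$ is uniformly bounded by \eqref{pd51}. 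The correction term $\tfrac{2(\nu-\mu)}{\nu}\int G\,\p_i u\cdot\na u^i\, dx$ appearing under the time derivative on the left-hand side of \eqref{pd118} can be absorbed into $\tfrac14\|\sqrt{\n}\dot u\|_{L^2}^2$ plus uniformly bounded terms, exactly as in \eqref{pd120}.

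Combining these estimates reduces \eqref{pd118} to a differential inequality of the form
\[
\frac{d}{dt}\!\Bigl(\int\n|\dot u|^2\, dx + R(t)\Bigr) + \tfrac{\mu}{2}\|\na\dot u\|_{L^2}^2 \le C\|\na u\|_{L^2}^2 \,\|\sqrt{\n}\dot u\|_{L^2}^2 + F(t),
\]
where $R(t)$ is bounded by $\tfrac14\|\sqrt{\n}\dot u\|_{L^2}^2 + C$ and $F \in L^1(0,T)$. Since $\int_0^T \|\na u\|_{L^2}^2\, dt \le C$ by \eqref{0x1} and the initial data are bounded by $\|g_2\|_{L^2}^2$, Gronwall's inequality immediately yields \eqref{c411}. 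The main obstacle is not the Gronwall step itself but the bookkeeping: arranging every right-hand side term of \eqref{pd118} so that it is integrable in $t$ over $(0,T)$ without any $\si$-weight. This is only possible because \eqref{csol2} makes the initial value of $\int\n|\dot u|^2\, dx$ finite, and because $T$ and $\nu$ are allowed in the constants at this stage, so $\nu$-independence is no longer required.
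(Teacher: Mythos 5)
Your argument is correct and follows the paper's own route: both integrate the differential inequality \eqref{pd118} (in the paper, via its simplified form \eqref{s417}) over $(0,T)$ without the $\si$-weight, using the compatibility condition \eqref{csol2} to render $\int\n|\dot u|^2\,dx\big|_{t=0}$ finite, and absorbing the time-derivative correction term $\tfrac{2(\nu-\mu)}{\nu}\int G\,\p_i u\cdot\na u^i\,dx$ via \eqref{pd120}. The only cosmetic difference is that you re-derive $\int_0^T\|\na u\|_{L^4}^4\,dt\le C$ from Lemma \ref{s23} and Gagliardo--Nirenberg rather than citing \eqref{s419} directly, and you invoke a Gr\"onwall step where the paper uses a direct integration after the $\|\na u\|_{L^2}$ factor has already been bounded by \eqref{s418}.
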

\begin{proof}
Taking into account the compatibility condition (\ref{csol2}), we can define
\be\la{c413} \ba
\sqrt{\n} \dot{u}(x,t=0)=g_2(x).
\ea \ee
By integrating (\ref{s417}) over $(0,T)$ and then combining it with (\ref{s418}), (\ref{s419}), we obtain (\ref{c411}).
\end{proof}
\begin{lemma}\la{c22}
There exists a positive constant $C$ depending only on 
$T,\ q,\ \ga,\ \mu,\ \nu,\ E_0$, $\| \rho_0 \|_{L^1 \cap W^{1,q}}$,
$\| u_0 \|_{H^2}$ and $\| g_2 \|_{L^2}$ such that
\be\la{c421} \ba
\sup_{0\le t\le T}\left(\norm[W^{1,q}]{ \rho} + \| u\|_{H^2} +\| \n^{1/2} u_t\|_{L^2}+\| \n_t\|_{L^2} \right) + \int_0^T \left( \|\nabla^2 u\|_{L^q}^2 + \| \na u_t\|^2_{L^2} \right) dt\le C.
\ea\ee
\end{lemma}
\begin{proof}
According to (\ref{c411}), similar to the proof of Lemma \ref{s22}, we derive (\ref{c421}). 		
\end{proof}
\begin{lemma}\la{c23}
There exists a positive constant $C$ depending only on 
$T,\ \ga,\ \mu,\ \nu,\ E_0,$ $ \| \rho_0 \|_{L^1 \cap H^2}$, $\| P(\rho_0) \|_{H^2}$,
$\| u_0 \|_{H^2}$ and $\|g_2\|_{L^2}$ such that	
\be\la{c431} \ba 
& \sup_{t\in[0,T]} \left(\norm[H^2]{\rho} + \norm[H^2]{P(\rho)}+ \|\n_t\|_{H^1}+\|P_t\|_{H^1} \right) \\
&\quad + \int_0^T\left(\| \na^3 u\|^2_{L^2}+\|\n_{tt}\|_{L^2}^2 +\|P_{tt}\|_{L^2}^2\right)dt \le C.\ea \ee
\end{lemma}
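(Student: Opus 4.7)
The plan is to bootstrap from Lemma \ref{c22} (which provides $\n \in L^\infty_t W^{1,q}_x$, $u \in L^\infty_t H^2_x$, $\n^{1/2} u_t \in L^\infty_t L^2_x$, and $\int_0^T \|\na u_t\|^2_{L^2} dt \le C$) to the $H^2$-in-space control of $\n$ and $P(\n)$, together with the $L^2_t H^3_x$ control of $u$. The first step exploits the elliptic structure of the momentum equation rewritten as $\mu \Delta u + (\mu+\lm)\na \div u = \n \dot u + \na P$. Standard $L^2$ elliptic theory on $\T^2$ gives
\be\nonumber
\|\na^3 u\|_{L^2} \le C \|\na(\n \dot u)\|_{L^2} + C\|\na^2 P\|_{L^2}.
\ee
By the product rule, $\|\na(\n \dot u)\|_{L^2} \le C\|\na \n\|_{L^q}\|\dot u\|_{L^{2q/(q-2)}} + C\|\n\|_{L^\infty}\|\na \dot u\|_{L^2}$, and Sobolev embedding combined with Lemma \ref{pt} and the bounds in Lemmas \ref{c21}--\ref{c22} yield $\int_0^T \|\na(\n \dot u)\|^2_{L^2} dt \le C$.

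The second step is to derive an evolution inequality for $\|\na^2 P\|_{L^2}$. The pressure satisfies $P_t + u \cdot \na P + \ga P \div u = 0$. Applying $\pa_i \pa_j$, taking the $L^2$ inner product with $\pa_i \pa_j P$, and integrating by parts in the transport term, I obtain
\be\nonumber
\frac{d}{dt}\|\na^2 P\|^2_{L^2} \le C(1+\|\na u\|_{L^\infty})\|\na^2 P\|^2_{L^2} + C\|P\|_{L^\infty}\|\na^2 P\|_{L^2}\|\na^3 u\|_{L^2} + C,
\ee
where the $\|\na^3 u\|_{L^2}$ factor originates from $\na^2 \div u$ in the commutator $\na^2(P \div u)$. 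The multiplier $\|\na u\|_{L^\infty}$ is time-integrable since $\|\na u\|_{L^\infty} \le C\|\na^2 u\|_{L^q}$ (Sobolev, $q>2$) and $\int_0^T \|\na^2 u\|^2_{L^q} dt \le C$ by Lemma \ref{c22}. Substituting the elliptic bound from the first step and using Young's inequality to absorb a small multiple of $\|\na^3 u\|^2_{L^2}$, I close a Gronwall inequality to obtain $\sup_t \|\na^2 P\|^2_{L^2} + \int_0^T \|\na^3 u\|^2_{L^2} dt \le C$. The $H^2$ bound on $\n$ then follows from $\n = P^{1/\ga}$ combined with $\|\n\|_{W^{1,q}} \le C$.

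For the time-derivative estimates, differentiating $\n_t = -u \cdot \na \n - \n \div u$ in space gives $\sup_t \|\n_t\|_{H^1} \le C$ from the newly obtained $H^2$-norms of $\n$ and $u$, and similarly for $\|P_t\|_{H^1}$. Differentiating $\n_t = -\div(\n u)$ once in time yields $\n_{tt} = -\div(\n_t u + \n u_t)$, and H\"older's inequality bounds $\|\n_{tt}\|_{L^2}$ pointwise in terms of $\|\n_t\|_{H^1}$, $\|u\|_{H^2}$, $\|\n\|_{L^\infty}$ and $\|\na u_t\|_{L^2}$, with $L^2_t$-integrability inherited from Lemma \ref{c22}; the $P_{tt}$ estimate is analogous from the time-differentiated pressure equation. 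The principal obstacle is closing the coupled loop between $\|\na^2 P\|_{L^2}$ and $\|\na^3 u\|_{L^2}$ in the second step: the highest-order contribution in the $\na^2 P$-energy estimate cannot be removed by further integration by parts and must be handled via the elliptic regularity of the momentum equation, with the Young absorption validated by the time-integrability established in step one.
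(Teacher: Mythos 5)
Your overall scheme matches the paper's: a Gronwall estimate on $\|\na^2 P\|_{L^2}$ from the transport equation for $P$, with the top-order source $P\,\na^2\div u$ controlled by substituting the elliptic/div-curl bound $\|\na^3 u\|_{L^2} \le C\bigl(\|\na(\n\dot u)\|_{L^2} + \|\na^2 P\|_{L^2}\bigr)$, whose coefficient $\|\na(\n\dot u)\|_{L^2}$ is $L^2$ in time by Lemmas \ref{c21}--\ref{c22}. That part is sound (your phrase ``absorb a small multiple of $\|\na^3 u\|^2_{L^2}$'' is imprecise --- there is no $\|\na^3 u\|^2$ on the left-hand side to absorb into --- but the intended mechanism of replacing $\|\na^3 u\|$ by the elliptic bound and then Gronwalling is correct and is exactly what the paper's (\ref{c432})--(\ref{c434}) do).

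The genuine gap is the step ``the $H^2$ bound on $\n$ then follows from $\n = P^{1/\ga}$ combined with $\|\n\|_{W^{1,q}}\le C$.'' This fails precisely because vacuum is allowed. Differentiating $\n = P^{1/\ga}$ twice gives
\be\nonumber
\na^2\n = \tfrac{1}{\ga}\,P^{\frac{1}{\ga}-1}\na^2 P + \tfrac{1}{\ga}\bigl(\tfrac{1}{\ga}-1\bigr)P^{\frac{1}{\ga}-2}\,\na P\otimes\na P
= \tfrac{1}{\ga}\,\n^{1-\ga}\na^2 P - (\ga-1)\,\n^{-1}\na\n\otimes\na\n,
\ee
and since $\ga>1$, the factors $\n^{1-\ga}$ and $\n^{-1}$ blow up wherever $\n$ vanishes, so $\|\na^2 P\|_{L^2}\le C$ together with $\na\n\in L^q$ does \emph{not} control $\|\na^2\n\|_{L^2}$. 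The paper avoids this by running the transport argument \emph{simultaneously} for $\na^2\n$ and $\na^2 P$ --- the equation $\n_t + u\cdot\na\n + \n\div u=0$ has exactly the same structure as the pressure equation (with coefficient $1$ in place of $\ga$), so the same second-order energy estimate yields (\ref{c432}) for the sum $\|\na^2\n\|_{L^2}+\|\na^2 P\|_{L^2}$, and no inversion of the state law is needed. The fix is easy (add the parallel estimate for $\na^2\n$), but as written the proposal does not establish $\sup_t\|\n\|_{H^2}\le C$, and this propagates: your bound on $\|\na\n_t\|_{L^2}$ uses the term $u\cdot\na^2\n$, which requires $\|\na^2\n\|_{L^2}$. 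Once that is repaired, your time-derivative estimates for $\n_t$, $P_t$, $\n_{tt}$, $P_{tt}$ proceed as in the paper.
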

\begin{proof}
First, by combining $(\ref{ns})_1$ and following a calculation similar to that of (\ref{s432}), we get
\be\la{c432} \ba
\frac{d}{dt} & \left(\norm[L^2]{\nabla^2 \rho} + \norm[L^2]{\nabla^2P } \right) \\
&\le C\norm[L^{\infty}]{\nabla u} \left(\norm[L^2]{\nabla^2P }+\norm[L^2]{\nabla^2 \rho}\right) + C\norm[L^2]{\nabla^3 u }. \ea\ee
On the other hand, by using (\ref{p2}), (\ref{dc1}), (\ref{c421}), 
(\ref{c411}), (\ref{pt1}) and H\"older's inequality we derive
\be\la{c433} \ba
\|\nabla^3 u\|_{L^2}&\le C\left( \| \na^2\div u\|_{L^2}+ \|\na^2 \o\|_{L^2}  \right)\\
&\le C\left( \| \na^2 G \|_{L^2}+ \| \na^2 P \|_{L^2} + \|\na^2 \o\|_{L^2} \right) \\
& \le C\left(\norm[L^2]{\nabla(\rho\dot{u})} + \| \na^2 P \|_{L^2} \right) \\ 
& \le C\left(\norm[L^q]{\nabla \rho} \norm[L^{\frac{2q}{q-2}}]{\dot{u}} +\| \n \|_{L^\infty}\norm[L^2]{\nabla\dot{u}} + \| \na^2 P \|_{L^2} \right) \\
& \le C \left(1+\norm[L^2]{\nabla\dot{u}}+ \| \na^2 P \|_{L^2} \right),\ea\ee
which together with (\ref{c411}), (\ref{c421}), (\ref{c432}), (\ref{c433}) and Gr\"onwall's inequality, leads to 
\be\la{c434} \ba \sup_{t\in[0,T]} {\left(\norm[L^2]{\nabla^2P } +\norm[L^2]{\nabla^2 \rho } \right)}\le C. \ea\ee

Consequently, it follows from (\ref{c411}), (\ref{c433}) and (\ref{c434}) that
\be\la{c4341}\ba
\int_0^T  \| \na^3 u\|^2_{L^2} \le C. \ea\ee

Additionally, differentiating $(\ref{ns})_1$ with respect to $x$ yields
\bnn
\p_i \n_t+u_j\cdot \p_j \p_i \n+\p_i u_j\cdot\p_j \n+\p_i \n \div u+ \n  \p_i \div u=0,
\enn 
which together with	(\ref{c421}) and (\ref{c434}), shows that
\be\la{c435} \ba 
\|\nabla \n_t\|_{L^2}& \le C\left( \|u\|_{L^\infty}\| \nabla^2
\n\|_{L^2}+\|\nabla u\|_{L^4}\|\nabla \n\|_{L^4}+\|\n\|_{L^{\infty}}\| \nabla^2
u\|_{L^2} \right)
\le C.\ea \ee 
Combining (\ref{c421}) and (\ref{c435}) leads to
\be\la{c436} \ba
\sup_{0\le t\le T}\|\n_t\|_{H^1}\le C.\ea \ee	

Finally, differentiating $(\ref{ns})_1$ with respect to $t$ gives
\be\la{c437}\ba 
\n_{tt} +\n_t \div u +\n \div u_t + u_t\cdot\nabla \n + u\cdot\nabla \n_t = 0,\ea \ee
which together with (\ref{c437}), (\ref{c421}) and (\ref{c436}) implies
\be\la{c439} \ba &\int_0^T \|\n_{tt}\|_{L^2}^2dt \\ & \quad \le C\int_0^T
\left( \|\n_t\|_{L^4}\|\nabla u\|_{L^4}+\|\nabla
u_t\|_{L^2}+\|u_t\|_{L^4}\|\nabla \n \|_{L^4}+ \|\nabla
\n_t\|_{L^2} \right)^2dt\\& \quad \le C.\ea\ee
In addition, we can make similar arguments for $P_t$ and $P_{tt}$ by using $(\ref{ns})_1$, which yields
\be\la{c4310} \ba \sup_{0\le t\le T}\|P_t\|_{H^1}+\int_0^T \|P_{tt}\|_{L^2}^2dt \le C.\ea\ee
This combined with (\ref{c434}), (\ref{c4341}), (\ref{c436}), (\ref{c439}) and (\ref{c4310}) gives (\ref{c431}), and hence we finish the proof of Lemma \ref{c23}.	
\end{proof}
\begin{lemma}\la{c24}
There exists a positive constant $C$ depending only on 
$T,\ \ga,\ \mu,\ \nu,\ E_0$, $\| \rho_0 \|_{L^1 \cap H^2}$, 
$\| P(\rho_0) \|_{H^2}$, $\| u_0 \|_{H^2}$ and $\| g_2 \|_{L^2}$ such that
\be\la{c442}\ba
\sup\limits_{0\le t\le T} t^{1/2} \left(\| \na u_t\|_{L^2}+\| \na^3 u\|_{L^2} \right) 
+ \int_0^T t\left(\|\n^{1/2}u_{tt}\|^2_{L^2}+\| \na^2 u_t\|^2_{L^2}\right)dt\le C.
\ea\ee
\end{lemma}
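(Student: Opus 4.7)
The plan is to establish a weighted energy estimate for $\na u_t$ by differentiating $(\ref{ns})_2$ in time and testing against $u_{tt}$, with the weight $t$ chosen to avoid any reliance on initial information for $u_{tt}$. Applying $\pa_t$ yields
\begin{equation*}
\n u_{tt} + \n u\cdot\na u_t - \mu \Delta u_t - (\mu+\lm)\na\div u_t = -\na P_t - \n_t u_t - (\n_t u + \n u_t)\cdot\na u.
\end{equation*}
Testing against $u_{tt}$, integrating by parts in the viscous terms, and rewriting $\int P_t \div u_{tt}\,dx = \frac{d}{dt}\int P_t \div u_t \,dx - \int P_{tt}\div u_t \,dx$, I obtain
\begin{equation*}
\frac{1}{2}\frac{d}{dt}\int \left(\mu |\na u_t|^2 + (\mu+\lm)(\div u_t)^2 - 2 P_t \div u_t\right)dx + \int \n|u_{tt}|^2 dx = \sum_j J_j,
\end{equation*}
where $J_j$ collects the convective terms, the $\n_t$ terms, and $-\int P_{tt}\div u_t \,dx$.

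Each $J_j$ is controlled by H\"older's inequality, the Gagliardo-Nirenberg inequality (Lemma \ref{gn1}), and the bounds from Lemmas \ref{c21}--\ref{c23}. The density-time-derivative pieces such as $|\int \n_t u_t\cdot u_{tt}dx|$ and $|\int \n_t u\cdot\na u\cdot u_{tt}dx|$ are handled via the uniform $L^\infty_tH^1_x$ bound on $\n_t$; the convective piece $|\int \n u\cdot\na u_t\cdot u_{tt}dx|$ is bounded by $C\|u\|_{L^\infty}\|\na u_t\|_{L^2}\|\sqrt{\n}u_{tt}\|_{L^2}$; and $|\int P_{tt}\div u_t dx|$ uses $\|P_{tt}\|_{L^2}^2 \in L^1_t$ from Lemma \ref{c23}. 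All quadratic-in-$u_{tt}$ pieces are absorbed into $\tfrac{1}{2}\|\sqrt{\n}u_{tt}\|_{L^2}^2$ on the left.

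Multiplying the resulting inequality by $t$ and integrating on $(0,T)$ eliminates any dependence on $\na u_t|_{t=0}$, since the boundary contribution at $t=0$ vanishes. The term $\frac{d}{dt}(t)\cdot\|\na u_t\|_{L^2}^2 = \|\na u_t\|_{L^2}^2$ is integrable in $t$ by Lemma \ref{c22}; Gr\"onwall's inequality together with the already-established $L^2_t$ bounds on $\|\na u\|_{L^4}^4$, $\|P_{tt}\|_{L^2}^2$, and $\|\na^2 u\|_{L^q}^2$ then gives
\begin{equation*}
\sup_{0\le t\le T} t\,\|\na u_t\|_{L^2}^2 + \int_0^T t\,\|\sqrt{\n}u_{tt}\|_{L^2}^2\, dt \le C.
\end{equation*}
To pass to $\na^3 u$ and $\na^2 u_t$, I view $(\ref{ns})_2$ and its time derivative as Lam\'e-type elliptic systems: standard $L^2$-regularity (as in Lemma \ref{estg}) yields $\|\na^3 u\|_{L^2} \le C(\|\na(\n\du)\|_{L^2}+\|\na^2 P\|_{L^2})$ and $\|\na^2 u_t\|_{L^2} \le C(\|\n u_{tt}\|_{L^2}+\|\na P_t\|_{L^2}+\text{lower order})$. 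Using $\du = u_t + u\cdot\na u$ together with the uniform bounds on $\|\n\|_{H^2}$, $\|u\|_{H^2}$, $\|P_t\|_{H^1}$ from Lemma \ref{c23} converts these into $\sup_{0\le t\le T} t^{1/2}\|\na^3 u\|_{L^2} + \int_0^T t\|\na^2 u_t\|_{L^2}^2 dt \le C$, as desired.

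The main obstacle is twofold. First, the convective contribution $\int \n u\cdot\na u_t\cdot u_{tt}dx$ forces one to invoke $\|u\|_{L^\infty}\le C\|u\|_{H^2}$ only in the $L^2_t$ sense, so that the coefficient appearing in front of $t\|\na u_t\|_{L^2}^2$ in the Gr\"onwall step lies in $L^1(0,T)$; without the $H^2$-bound from Lemma \ref{c22} this closure would fail. Second, the compatibility condition \eqref{csol2} provides only $\sqrt{\n_0}u_t|_{t=0} = -g_2\in L^2$ and no control on $\na u_t|_{t=0}$, so the time weight $t$ is essential, and the $L^2_tL^2_x$ bound on $\na u_t$ from Lemma \ref{c22} must be exactly sharp enough to absorb the $\frac{d}{dt}(t)$ contribution without loss.
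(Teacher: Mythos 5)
Your overall strategy matches the paper's: time-differentiate $(\ref{ns})_2$, test against $u_{tt}$, multiply by $t$, and then pass to $\na^3u$ and $\na^2u_t$ via the elliptic estimates of Lemma \ref{estg}. The elliptic step at the end is correct and agrees with the paper's $(\ref{c433})$, $(\ref{c4415})$, $(\ref{c4417})$. However, there is a real gap in the energy estimate itself.

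You propose to estimate $\int \n_t\, u_t\cdot u_{tt}\,dx$ and $\int \n_t\, u\cdot\na u\cdot u_{tt}\,dx$ \emph{directly} ``via the uniform $L^\infty_tH^1_x$ bound on $\n_t$.'' That does not close. Any H\"older split of these integrals produces a factor $\|u_{tt}\|_{L^2}$ (or $\|u_{tt}\|_{L^p}$) with no $\sqrt{\n}$ weight, and since vacuum is allowed, $\|u_{tt}\|_{L^2}$ is \emph{not} dominated by the quantity $\|\sqrt{\n}\,u_{tt}\|_{L^2}$ that sits on the left. Poincar\'e (Lemma~\ref{pt}) would introduce $\|\na u_{tt}\|_{L^2}$, which is not available until Lemma~\ref{c26}; writing $\n_t=-\div(\n u)$ and integrating by parts also fails, because it regenerates $\na u_{tt}$. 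The trick you apply to the pressure, namely
$$\int P_t\,\div u_{tt}\,dx=\frac{d}{dt}\int P_t\,\div u_t\,dx-\int P_{tt}\,\div u_t\,dx,$$
must also be applied to the $\n_t$-terms. This is exactly what the paper does with $J_0$ in $(\ref{c447})$--$(\ref{c448})$: it sets
$$J_0=-\tfrac12\int\n_t|u_t|^2dx-\int\n_t\,u\cdot\na u\cdot u_t\,dx+\int P_t\,\div u_t\,dx,$$
pushes $\frac{d}{dt}J_0$ to the right-hand side, and the remaining pieces $J_1=-\tfrac12\int\div(\n u)_t|u_t|^2dx$ and $J_2=\int(\n_t u\cdot\na u)_t\cdot u_t\,dx$ now contain only $u_t$, not $u_{tt}$, so the bounds on $\n_t,\n_{tt}$ from Lemma~\ref{c23} and Poincar\'e's inequality suffice. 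The only surviving $u_{tt}$-interactions are $J_3=-\int(\n u_t\cdot\na u+\n u\cdot\na u_t)\cdot u_{tt}\,dx$, which \emph{do} carry a $\n$-weight and can be absorbed into $\|\sqrt{\n}u_{tt}\|_{L^2}^2$ exactly as you describe for the convective piece. Without this reshuffling of the $\n_t$-terms, the Gr\"onwall loop does not close. A secondary, milder, issue: by keeping $-2P_t\div u_t$ inside the ``energy'' you should also remark that $\int(\mu|\na u_t|^2+(\mu+\lm)(\div u_t)^2-2P_t\div u_t)\,dx\ge\tfrac{\mu}{2}\|\na u_t\|_{L^2}^2-C$, using the $L^\infty_tH^1$ bound on $P_t$, so that the weighted integrand on the left really does control $t\|\na u_t\|_{L^2}^2$; the paper sidesteps this by leaving $\int P_t\div u_t\,dx$ on the right inside $J_0$ and bounding $|J_0|\le\varepsilon\|\na u_t\|_{L^2}^2+C_\varepsilon$.
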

\begin{proof}
First, differentiating  $(\ref{ns})_2$  with
respect to $t$ and multiplying the resulting equation  by
$u_{tt}$, and after integration by parts we can derive \be\la{c447} \ba
& \frac{1}{2}\frac{d}{dt}\int \left(\mu|\nabla u_t|^2 + (\lambda +
\mu)({\rm div}u_t)^2\right)dx+\int_{ }\rho u_{tt}^2dx \\
&=\frac{d}{dt}\left(-\frac{1}{2}\int_{ }\rho_t |u_t|^2 dx- \int_{
}\rho_t u\cdot\nabla u\cdot u_tdx+ \int_{ }P_t {\div}u_tdx\right)\\
&\quad - \frac{1}{2}\int_{ } \div(\rho u)_t |u_t|^2 dx +
\int_{ } \left( \rho_{t} u\cdot\nabla u \right)_t\cdot u_tdx \\
&\quad-\int_{ } \left(\rho u_t\cdot\nabla u + \rho u\cdot\nabla u_t \right) \cdot u_{tt}dx -\int_{ }P_{tt}{\rm div}u_tdx \\ &\triangleq
\frac{d}{dt}J_0+ \sum\limits_{k=1}^4 J_k. \ea \ee		
Next, we estimate each $J_k$, $k=0,1,\dots ,4$.

It follows from $(\ref{ns})_1$, (\ref{c421}), (\ref{c431}) and Young's inequality that
\be \la{c448} \ba |J_0|& =\left|-\frac{1}{2}\int_{
}\rho_t |u_t|^2 dx- \int_{ }\rho_t u\cdot\nabla u\cdot u_tdx+
\int_{ }P_t {\rm div}u_tdx\right|\\ &\le \left|\int_{ } {\div}(\n
u)|u_t|^2dx\right| + C\norm[L^4]{\rho_t}\norm[L^4]{u}\norm[L^4]{\nabla u}
\norm[L^4]{u_t} + C\|P_t\|_{L^2}\|\nabla u_t\|_{L^2}\\ &\le C \int_{
} \n |u||u_t||\nabla u_t| dx +C\|\nabla u_t\|_{L^2} +C \\ &\le C
\|\n^{1/2} u_t\|_{L^2}^{1/2}\|u_t\|_{L^2}^{1/2}\|\nabla
u_t\|_{L^2} +C\|\nabla u_t\|_{L^2} +C \\ &\le \varepsilon \|\nabla
u_t\|_{L^2}^2+C_\varepsilon. \ea\ee

Next, based on 
$(\ref{ns})_1$, (\ref{c421}), (\ref{c431}) and (\ref{pt1}), we can get	
\be \la{c449}\ba
|J_1|& = \left|\int_{ } \left(\rho_tu + \rho u_t \right )\cdot\nabla \left(|u_t|^2 \right)dx\right|\\
& \le C \| \na u_t\|_{L^2} \| \n_t\|_{L^6} \|u\|_{L^6} \| u_t\|_{L^6}+C \| \na u_t\|_{L^2} \| u_t\|^{2}_{L^4} \\
& \le C\norm[L^2]{\nabla u_t}+C\norm[L^2]{\nabla u_t}^2 +C\norm[L^2]{\nabla u_t}^{3}\\
& \le C\norm[L^2]{\nabla u_t}^4 +C. \ea \ee
Similarly,
\be \la{c4410}\ba
|J_2|&=\left|\int_{ }\left(\rho_t u\cdot\nabla u \right)_t\cdot u_{t}dx
\right|\\
& = \left|  \int_{ }\left(\rho_{tt} u\cdot\nabla u\cdot u_t +\rho_t
u_t\cdot\nabla u\cdot u_t+\rho_t u\cdot\nabla u_t\cdot
u_t\right)dx\right|\\ &\le   \norm[L^2]{\rho_{tt}}\norm[L^6]{u}
\norm[L^6]{\nabla u}\norm[L^6]{u_t}+\norm[L^4]{\rho_t}\norm[L^4]{u_t}^2
\norm[L^4]{\nabla u} \\
&\quad+\norm[L^2]{\nabla u_t}\norm[L^6]{\rho_t}\norm[L^6]{u}\norm[L^6]{u_t}\\
& \le C\norm[L^2]{\rho_{tt}}^2 + C\norm[L^2]{\nabla u_t}^2+C. \ea \ee

In addition, we conclude from (\ref{c421}) and Cauchy's inequality that
\be\la{c4411} \ba |J_3| & \le  \int_{ } \left( \left| \rho u_t\cdot\nabla
u\cdot u_{tt} \right|+ \left| \rho u\cdot\nabla u_t\cdot u_{tt} \right| \right) dx\\
& \le C\|\n^{1/2}u_{tt}\|_{L^2} \|u_t\|_{L^4}\|\na u\|_{L^4}+
C\|\n^{1/2}u_{tt}\|_{L^2} \|u\|_{L^\infty}\|\na u_t\|_{L^2} \\
& \le  \varepsilon  \norm[L^2]{\rho^{{1/2}}u_{tt}}^2 + C_\varepsilon \norm[L^2]{\nabla u_t}^2 +C_\varepsilon. \ea\ee

Finally, direct calculation yields that
\be\la{c4412} \ba |J_4|=
\left|\int_{ }P_{tt}{\rm div}u_tdx\right| & \le
\norm[L^2]{P_{tt}}\norm[L^2]{{\rm div}u_t}\\& \le
C\norm[L^2]{P_{tt}}^2 + C\norm[L^2]{\nabla u_t}^2. \ea\ee
Substituting (\ref{c448})--(\ref{c4412}) into (\ref{c447}) results in
\be\la{c4413} \ba
& \frac{1}{2}\frac{d}{dt}\int \left(\mu|\nabla u_t|^2 + (\lambda +
\mu)({\rm div}u_t)^2\right)dx+\int_{ }\rho u_{tt}^2dx
\\
& \quad \le \frac{d}{dt}J_0+\varepsilon \norm[L^2]{\rho^{{1/2}}u_{tt}}^2 +
C_\varepsilon \left(1+\norm[L^2]{\nabla u_t}^4 +\norm[L^2]{\n_{tt}}^2+\norm[L^2]{P_{tt}}^2 \right).	
\ea \ee	
Multiplying (\ref{c4413})by $t$ and integrating the resulting equation over $(0,T)$, 
and then taking $\varepsilon$ suitably small,
we obtain from (\ref{c4413}), (\ref{c421}), (\ref{c431}) and Gr\"onwall's inequality that
\be\la{c4414} \ba \sup\limits_{0\le t\le T}t\int_{ }|\nabla u_t|^2dx
+ \int_0^Tt\int\rho u_{tt}^2dxdt \le C. \ea\ee
On the other hand, it follows from  (\ref{c433}) and (\ref{c421}) that 
\be \la{c4415} \ba 
\|\na^3 u \|_{L^2} &\le C\left(1+\norm[L^2]{\nabla\dot{u}}+ \| \na^2 P \|_{L^2} \right)\\
&\le C \left(1+\| \nabla u_t \|_{L^2}+\| \nabla u\|^2_{L^4}+\| u \|_{L^\infty}\| \nabla^2 u  \|_{L^2} \right) \\
&\le C\| \nabla   u_t  \|_{L^2} + C,\ea \ee
which together with (\ref{c4414}) leads to
\be \la{c4416} \ba 
\sup_{0\le t\le T} \sqrt{t} \|\na^3 u \|_{L^2} \le C.
\ea \ee

Additionally, we deduce from (\ref{p2}), (\ref{dc1}), (\ref{c421}) and (\ref{c431}) that
\be \la{c4417} \ba 
\|\na^2 u_t \|_{L^2} &\le C \left(\| \na \div u_t \|_{L^2}+\| \na\o_t \|_{L^2} \right) \\
& \le C\left(\| \na G_t \|_{L^2}+\| \na P_t \|_{L^2}+\| \na \o_t \|_{L^2} \right) \\
& \le C \|  \left(\n \dot u\right)_t \|_{L^2} +C \\
& \le C\left(\|\n  u_{tt}\|_{L^2}+ \|\n_t\|_{L^4}
\|u_t\|_{L^4}+\|\n_t\|_{L^4}\| u\|_{L^\infty}\|\nabla
u\|_{L^4}\right)\\&\quad
+C\left(\| u_t\|_{L^4}\|\nabla u\|_{L^4}+ \| u\|_{L^\infty}
\|\nabla u_t\|_{L^2}\right)+C \\ 
&\le C\| \na u_t \|_{L^2} + C\|\n  u_{tt}\|_{L^2} +C, \ea \ee
which together with (\ref{c4414}), implies 
\be\la{c4418} \ba
\int_{0}^T t \| \nabla ^2 u_t\|_{L^2}^2dt\le C . \ea \ee
Consequently, combining (\ref{c4414}), (\ref{c4416}) and (\ref{c4418}) leads to 
(\ref{c442}), which completes the proof of Lemma \ref{c24}.
\end{proof}
\begin{lemma}\la{c25}
There exists a positive constant $C$ depending only on
$T,\ \ga,\ \mu,\ \nu,\ E_0$, $\| \rho_0 \|_{L^1 \cap W^{2,q}}$,
$\| P(\rho_0) \|_{W^{2,q}}$, $\| u_0 \|_{H^2}$ 
and $\| g_2 \|_{L^2}$ such that
\be\la{c451} \ba
\sup_{0\leq t\leq T}\left(\|\nabla^2 \n\|_{L^q } +\|\nabla^2 P  \|_{L^q }\right) \leq C,
\ea\ee
\be\la{c452} \ba
\sup_{0\leq t\leq T} t^{1/2} \|\na^2(\n u)\|_{L^q}
+\int_{0}^T \left(\|\na u_t \|_{L^q}^{1+1/q}+\|\na^3 u \|_{L^q}^{1+1/q}+ t \|\na^3 u \|_{L^q}^2 \right) dt \le C .\ea\ee
\end{lemma}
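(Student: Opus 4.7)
The plan is to run the standard higher-order regularity scheme for the compressible Navier--Stokes system, but with the $L^q$ estimates coupled through a time-weighted Gagliardo--Nirenberg interpolation. The two principal inputs are the transport equation for $\na^2\n$ and $\na^2 P$ in $L^q$, and an elliptic estimate for $\na^3 u$ via the effective viscous flux formalism (Lemmas \ref{estg} and \ref{dc}). The loop will close once I independently bound $\|\na u_t\|_{L^q}$ in a suitable time-integrable sense using only the $L^2$-type estimates from Lemmas \ref{c22}--\ref{c24}.

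First, I would apply $\pa_{ij}^2$ to $(\ref{ns})_1$, multiply by $q|\na^2\n|^{q-2}\na^2\n$, and integrate over $\T^2$. Routine integration by parts together with the pointwise bounds $\|\n\|_{L^\infty}+\|\na\n\|_{L^q}+\|u\|_{H^2}\le C$ from Lemmas \ref{c22}--\ref{c23} yields
\[
\frac{d}{dt}\|\na^2\n\|_{L^q}\le C(1+\|\na u\|_{L^\infty})\|\na^2\n\|_{L^q}+C\|\na^3 u\|_{L^q}+C,
\]
and an identical inequality for $\|\na^2 P\|_{L^q}$ (since $P=\n^\ga$ satisfies a transport equation of the same form). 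Next, the momentum equation combined with Lemmas \ref{estg} and \ref{dc} produces
\[
\|\na^3 u\|_{L^q}\le C\|\na(\n\du)\|_{L^q}+C\|\na^2 P\|_{L^q}\le C\|\na u_t\|_{L^q}+C\|\na^2 P\|_{L^q}+C,
\]
after using $\|\na\n\|_{L^q}+\|u\|_{W^{1,\infty}}+\|\na^2 u\|_{L^q}\le C$ in a suitable norm (pointwise or in $L^2(0,T)$).

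The decisive step is the Gagliardo--Nirenberg bound $\|\na u_t\|_{L^q}\le C\|\na u_t\|_{L^2}^{2/q}\|\na^2 u_t\|_{L^2}^{(q-2)/q}$ (valid on $\T^2$ for $q>2$). Raising to the power $(q+1)/q$ and applying H\"older's inequality with three factors $\|\na u_t\|_{L^2}^2$, $t\|\na^2 u_t\|_{L^2}^2$, and $t^{-\alpha}$ with $\alpha=(q-2)(q+1)/(q(q-1))$, one checks that $\alpha<1$ for every $q>2$ (which reduces to the trivial inequality $-2<0$). The integrability of $\|\na u_t\|_{L^2}^2$ on $(0,T)$ (Lemma \ref{c22}) and of $t\|\na^2 u_t\|_{L^2}^2$ on $(0,T)$ (Lemma \ref{c24}) then give $\int_0^T\|\na u_t\|_{L^q}^{(q+1)/q}\,dt\le C$. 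A parallel interpolation with a slightly different H\"older split yields $\int_0^T t\|\na u_t\|_{L^q}^2\,dt\le C$.

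Feeding this back into the elliptic bound for $\na^3 u$ produces the integral estimate for $\|\na^3 u\|_{L^q}^{(q+1)/q}$, and closing Gr\"onwall on the first step (using $\|\na u\|_{L^\infty}\le C(1+\|\na^2 u\|_{L^q})$, which is square-integrable in $t$ by Lemma \ref{c22}) establishes (\ref{c451}) without circularity. The bound on $\int_0^T t\|\na^3 u\|_{L^q}^2\,dt$ follows analogously. Finally, I would expand $\na^2(\n u)=u\,\na^2\n+2\na\n\cdot\na u+\n\,\na^2 u$: the first two summands are uniformly controlled by (\ref{c451}) together with Lemma \ref{c22}, while $\sup_t t^{1/2}\|\na^2 u\|_{L^q}\le C$ follows from Gagliardo--Nirenberg interpolation between the uniform bound $\|\na^2 u\|_{L^2}\le C$ (Lemma \ref{c22}) and the weighted bound $t^{1/2}\|\na^3 u\|_{L^2}\le C$ (Lemma \ref{c24}). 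The principal obstacle is the exponent bookkeeping in the third paragraph: a clumsier interpolation would either force me to use an $L^\infty$-in-time bound on $\|\na^2 u_t\|_{L^2}$, which is not available, or leave a residual time factor $t^{-\alpha}$ with $\alpha\ge 1$; and I must establish the interpolation step \emph{independently} of the density $L^q$ sup estimate in order to break the apparent circularity between $\|\na^2\n\|_{L^q}$ and $\|\na^3 u\|_{L^q}$.
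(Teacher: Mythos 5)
Your proposal is essentially the same as the paper's proof: you derive the transport inequality for $\|\na^2\n\|_{L^q}$ and $\|\na^2 P\|_{L^q}$, close it through the elliptic estimate $\|\na^3 u\|_{L^q}\le C\|\na u_t\|_{L^q}+C\|\na^2 P\|_{L^q}+C$, control $\int_0^T\|\na u_t\|_{L^q}^{1+1/q}\,dt$ by Gagliardo--Nirenberg interpolation between $\|\na u_t\|_{L^2}$ and $\|\na^2 u_t\|_{L^2}$, and then run Gr\"onwall using $\|\na u\|_{L^\infty}\lesssim 1+\|\na^2 u\|_{L^q}\in L^2(0,T)$. The only cosmetic difference is in the interpolation step: you use a three-factor H\"older with $\int_0^T\|\na u_t\|_{L^2}^2\,dt\le C$ from Lemma \ref{c22} and $\int_0^T t\|\na^2 u_t\|_{L^2}^2\,dt\le C$ from Lemma \ref{c24}, while the paper (\ref{c456}) uses the pointwise bound $\sup_t t\|\na u_t\|_{L^2}^2\le C$ from Lemma \ref{c24} together with Young's inequality; both give the same integrable singularity $t^{-\alpha}$ with $\alpha<1$, so the exponent bookkeeping you flag as the main obstacle indeed goes through.
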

\begin{proof}
First, similar to the calculation of (\ref{s432}) and based on $(\ref{ns})_1$, we can derive
\be\la{c453} \ba \frac{d}{dt} \|\na^2 P\|_{L^q}
& \le C\left(\||\na^2 u||\na P|\|_{L^q}+ \||\na u||\na^2 P|\|_{L^q}+\| \na^3u
\|_{L^q}\right)\\
& \le C \left( \| \na^3 u \|_{L^2} \|\na^2P\|_{L^2} +\| \na u\|_{L^\infty}\|\na^2 P \|_{L^q} +\| \na^3u\|_{L^q} \right)\\
& \le C \left(\| \na u\|_{L^\infty}\|\na^2 P \|_{L^q} +\| \na^3u\|_{L^q} \right).\ea\ee

Next, similar to (\ref{c433}), it follows from (\ref{p2}), (\ref{dc1}), (\ref{c411}) and (\ref{c421}) that
\be\la{c454}\ba
\|\na^3 u \|_{L^q} 
& \le C\left(\norm[L^q]{\nabla(\rho\dot{u})} + \| \na^2 P \|_{L^q} \right) \\
& \le C\left(1+\norm[L^q]{\nabla\dot{u}}+ \| \na^2 P \|_{L^q} \right) \\
& \le C\left(1+\| \nabla u_t \|_{L^q}+\| \na u \|^2_{L^{2q}}+ \|u\|_{L^\infty}\| \na^2 u\|_{L^q} + \| \na^2 P \|_{L^q} \right)\\
& \le  C\left(1+\| \nabla   u_t  \|_{L^q} + \|\na^2 u \|_{L^2}^{\frac{q}{2(q-1)}} \|\na^3 u \|_{L^q}^{\frac{q-2}{2(q-1)}} + \| \na^2 P \|_{L^q} \right) \\
& \le \frac{1}{2} \|\na^3 u \|_{L^q}+ C\| \nabla   u_t  \|_{L^q} +  C \| \na^2 P \|_{L^q}+C, \ea \ee
which implies
\be\la{c455} \ba
\|\na^3 u \|_{L^q}  \le  C\| \nabla   u_t  \|_{L^q} +  C \| \na^2 P \|_{L^q}  +C.
\ea \ee
On the other hand, it follows from (\ref{gn11}) and (\ref{c442}) that
\be\la{c456}\ba 
\int_0^{T} \|\na u_t \|_{L^q}^{1+1/q}dt & \le C\int_0^{T}\left( (t\|\na   u_t\|_{L^2}^2)^{1/q}(t\|\na^2  u_t\|_{L^2}^2)^{(q-2)/(2q) }t^{-1/2}\right)^{1+1/q}dt \\ 
& \le C \int_0^{T_0}\left( t\|\na^2 u_t\|_{L^2}^2 + t^{-(q^2+q)/(q^2+q+2)}\right)dt  \\ &\le  C, \ea\ee
which together with (\ref{c453}), (\ref{c456}), (\ref{c421}) and Gr\"onwall's inequality yields
\be\la{c457} \ba 
\sup\limits_{0\le t\le T}\|\nabla^2  P\|_{L^q} \le C.\ea \ee
Similarly, by using $(\ref{ns})_1$ we derive
\be\la{c458} \ba
\sup\limits_{0\le t\le T}\| \na ^2 \n \|_{L^q} \le C,\ea \ee
which along with (\ref{c457}) and (\ref{c458}) gives (\ref{c451}). 

Therefore, we deduce from (\ref{c458}), (\ref{c421}) and H\"older's inequality that 
\be\la{c459}\ba 
\|\na^2(\n u)\|_{L^q} & \le C  \||\na^2 \n|| u |\|_{L^q} 
+C \| \na u \|_{L^q}+C \|\na^2  u \|_{L^q} \\
& \le C \|\na^2\n\|_{L^q} \| u\|_{L^\infty} + C\|\na^3  u\|_{L^{2}} \\
& \le C+ C\|\na^3  u\|_{L^{2}}, \ea \ee
which together with (\ref{c442}) yields
\be\la{c4510}\ba
\sup\limits_{0\le t\le T} \sqrt{t} \|\na^2(\n u)\|_{L^q} \le C. \ea \ee 

It follows from (\ref{c442}), (\ref{c455}), (\ref{c456}) and (\ref{c457}) that
\be\la{c4511} \ba
\int_0^T \|\na^3 u \|_{L^q}^{1+1/q}+  t \|\na^3 u \|^2_{L^q}dt\le C,\ea \ee
which together with (\ref{c456}), (\ref{c4510}) and (\ref{c4511}) gives (\ref{c452}), and hence we finish the proof of Lemma \ref{c25}.
\end{proof}
\begin{lemma}\la{c26}
There exists a positive constant C depending only on $T,\ \mu,\ \nu,\ \ga,\ E_0$,
$\| \rho_0 \|_{L^1 \cap W^{2,q}}$, $\| P(\rho_0) \|_{W^{2,q}}$,
$\| u_0 \|_{H^2}$ and $\| g_2 \|_{L^2}$ such that
\be\la{c461} \ba
\sup_{0\leq t\leq T} t\left(\|\n^{1/2}u_{tt}\|_{L^2}+  \|\na^3 u \|_{L^q} + \|\na^2
u_t \|_{L^2}  \right) +\int_{0}^T  t^2 \|\nabla u_{tt}\|_{L^2}^2 dt\leq C .\ea\ee	
\end{lemma}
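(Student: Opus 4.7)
The plan is to push the energy estimate of Lemma \ref{c24} one order higher in time by differentiating $(\ref{ns})_2$ twice in $t$ and testing the result against $u_{tt}$. Starting from the momentum equation differentiated once in time (exactly the equation underlying the proof of Lemma \ref{c24}), one more time-differentiation produces an identity of the schematic form
\begin{equation*}
\n u_{ttt}+2\n_t u_{tt}+\n_{tt}u_t+(\text{lower-order trilinear in }u,u_t,u_{tt})-\mu\Delta u_{tt}-(\mu+\lambda)\na\div u_{tt}+\na P_{tt}=0.
\end{equation*}
Testing against $u_{tt}$, and using $\n_t=-\div(\n u)$ so that the transport contribution $\int \n(u\cdot\na u_{tt})\cdot u_{tt}\,dx$ exactly cancels the $-\tfrac12\int\n_t|u_{tt}|^2\,dx$ arising from the time integration by parts in $\int\n u_{ttt}\cdot u_{tt}\,dx$, I obtain
\begin{equation*}
\frac{1}{2}\frac{d}{dt}\int\n|u_{tt}|^2dx+\mu\int|\na u_{tt}|^2dx+(\lambda+\mu)\int(\div u_{tt})^2dx=\sum_{k=1}^{K} R_k,
\end{equation*}
where each $R_k$ is a bilinear or trilinear integral over $\mathbb{T}^2$.

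Next I multiply this identity by $t^2$ and integrate over $(0,T)$. The extra contribution $2t\int\n|u_{tt}|^2\,dx$ produced by $\frac{d}{dt}(t^2)$ is absorbed using $\int_0^T t\|\n^{1/2}u_{tt}\|_{L^2}^2\,dt\le C$ from Lemma \ref{c24}. Each $t^2 R_k$ is then estimated by combining H\"older's inequality, the Gagliardo-Nirenberg inequality \eqref{gn11}, and the Poincar\'e-type estimate \eqref{pt1} applied to $u_{tt}$ in the form
\begin{equation*}
\|u_{tt}\|_{L^4}\le C\|\n^{1/2}u_{tt}\|_{L^2}^{1/2}\|\na u_{tt}\|_{L^2}^{1/2}+C\|\na u_{tt}\|_{L^2},
\end{equation*}
together with the sup-/integral-bounds on $\n,\na u,\na^2 u,\na u_t,\n_t,P_t,\n^{1/2}u_{tt}$ already available from Lemmas \ref{c21}--\ref{c25}. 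The most delicate terms, $\int\n_{tt}u_t\cdot u_{tt}\,dx$ and $\int P_{tt}\div u_{tt}\,dx$, are treated using $\int_0^T(\|\n_{tt}\|_{L^2}^2+\|P_{tt}\|_{L^2}^2)\,dt\le C$ from Lemma \ref{c23}, with the top-order pieces absorbed into $\mu t^2\|\na u_{tt}\|_{L^2}^2$ by Young's inequality. A Gr\"onwall argument then delivers
\begin{equation*}
\sup_{0\le t\le T} t\|\n^{1/2}u_{tt}\|_{L^2}^2+\int_0^T t^2\|\na u_{tt}\|_{L^2}^2\,dt\le C.
\end{equation*}

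The remaining two sup-bounds in \eqref{c461} then follow from elliptic regularity. Reusing \eqref{c4417}, namely $\|\na^2 u_t\|_{L^2}\le C(\|\n u_{tt}\|_{L^2}+\|\na u_t\|_{L^2}+1)$, and multiplying by $t$ yields $\sup_{[0,T]}t\|\na^2 u_t\|_{L^2}\le C$ after invoking \eqref{pd51}, Lemma \ref{c24}, and the bound just obtained. For $\sup_{[0,T]}t\|\na^3 u\|_{L^q}$, I apply \eqref{c455} to write $\|\na^3 u\|_{L^q}\le C(\|\na u_t\|_{L^q}+\|\na^2 P\|_{L^q}+1)$, and interpolate via Gagliardo-Nirenberg in $2$D,
\begin{equation*}
\|\na u_t\|_{L^q}\le C\|\na u_t\|_{L^2}^{2/q}\|\na^2 u_t\|_{L^2}^{(q-2)/q},
\end{equation*}
so that $t\|\na u_t\|_{L^q}\le C t\cdot t^{-1/q}\cdot t^{-(q-2)/q}=Ct^{1/q}\le C$, using the $t^{1/2}$-sup-bound on $\|\na u_t\|_{L^2}$ (Lemma \ref{c24}), the new $t$-sup-bound on $\|\na^2 u_t\|_{L^2}$, and $\|\na^2 P\|_{L^q}\le C$ from Lemma \ref{c25}.

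The chief obstacle is the clean absorption of the highest-order RHS terms involving $\n_{tt}$ and $P_{tt}$ paired with $u_{tt}$: these quantities are available only in $L^2_tL^2_x$ from Lemma \ref{c23}, and $u_{tt}$ itself carries the worst temporal singularity near $t=0$. The weight $t^2$ is precisely calibrated to absorb this singularity, and the $\sqrt{\n}$-weighted Poincar\'e-type bound on $\|u_{tt}\|_{L^4}$ is crucial for converting these terms into a combination of the sup-quantity $\|\n^{1/2}u_{tt}\|_{L^2}$ (already controlled by Lemma \ref{c24} at the weight $t^{1/2}$) and the dissipative quantity $\|\na u_{tt}\|_{L^2}$ that can be absorbed on the left.
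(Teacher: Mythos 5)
Your proposal is correct and follows essentially the same route as the paper: differentiate $(\ref{ns})_2$ twice in time, test against $u_{tt}$ using $\n_t=-\div(\n u)$ to cancel the transport contribution, estimate the right-hand side with H\"older, Gagliardo-Nirenberg, the Poincar\'e-type inequality \eqref{pt1}, and the bounds from Lemmas \ref{c21}--\ref{c25} (notably \eqref{c431} for $\|\n_{tt}\|_{L^2}$, $\|P_{tt}\|_{L^2}$ and \eqref{c442} for the $t^{1/2}$-weighted bound on $\|\na u_t\|_{L^2}$ and $\int_0^T t\|\n^{1/2}u_{tt}\|_{L^2}^2\,dt$), then multiply by $t^2$ and integrate. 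The concluding elliptic-regularity step via \eqref{c4417} and \eqref{c455} with the Gagliardo-Nirenberg interpolation of $\|\na u_t\|_{L^q}$ also matches the paper's derivation of \eqref{c4610}; the only cosmetic difference is that the paper absorbs the $t^2\|\n^{1/2}u_{tt}\|_{L^2}^2$ contribution directly via $T\int_0^T t\|\n^{1/2}u_{tt}\|_{L^2}^2\,dt\le C$ rather than invoking Gr\"onwall.
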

\begin{proof}
Differentiating $(\ref{ns})_2$ with respect to $t$ twice enables us to obtain
\be\la{c462}\ba &\n u_{ttt}+\n u\cdot\na u_{tt}-\mu\Delta
u_{tt}-(\mu+\lambda)\nabla{\rm div}u_{tt}\\&= -2\n_t u_{tt}
-\n_{tt} u_t-2(\n u)_t\cdot\na u_t -\left(\n u \right)_{tt} \cdot\na u -\na P_{tt}.
\ea\ee
Multiplying (\ref{c462}) by $u_{tt}$ and integrating the
resulting equation over ${\T^2}$, after integration by parts we have	
\be \la{c463}\ba &\frac{1}{2}\frac{d}{dt}\int_{ }\n
|u_{tt}|^2dx+\int_{ }\left(\mu|\na u_{tt}|^2+(\mu+\lambda)({\div}u_{tt})^2\right)dx \\ &=-4\int_{ }  u^i_{tt}\n u\cdot\na
u^i_{tt} dx-\int_{ } \left[\na (u_t\cdot u_{tt})+2\na
u_t\cdot u_{tt}\right] \cdot (\n u)_t  dx\\
&\quad -\int_{} \left(\n_{tt}u+2\n_tu_t \right) \cdot\na u\cdot u_{tt}dx 
-\int_{} \left( \n u_{tt}\cdot\na u\cdot u_{tt}-P_{tt}{\div}u_{tt} \right) dx\\ 
& \triangleq\sum_{i=1}^4I_i,\ea\ee 
due to $(\ref{ns})_1$. 

Next, we estimate each $I_i(i=1,\cdots,4)$ as follows:

First, using H\"older's and Cauchy's inequalities gives \be \la{c464} \ba |I_1|&\le
C\|\n^{1/2}u_{tt}\|_{L^2}\|\na u_{tt}\|_{L^2}\| u \|_{L^\infty}\\
&\le \varepsilon  \|\na u_{tt}\|_{L^2}^2 + C_\varepsilon \|\n^{1/2}u_{tt}\|^2_{L^2} .\ea\ee 
Then, we deduce from (\ref{c421}), (\ref{c431}), (\ref{pt1})
and Cauchy's inequality that 
\be \la{c465}\ba |I_2|&\le C \left(\| u_{tt}\|_{L^6}\| \na
u_t\|_{L^2}+\| \na u_{tt}\|_{L^2}\| u_t\|_{L^6}\right) \left(\|\n
u_t\|_{L^3}+\|\n_t u\|_{L^3}\right) \\
&\le C \left (\|\n^{1/2}u_{tt}\|_{L^2} + \| \na u_{tt}\|_{L^2} \right) \left(1+\|\na u_{t}\|_{L^2} \right) \left( 1 + \|u_t\|_{L^3} \right) \\ 
&\le \varepsilon \| \na u_{tt}\|^2_{L^2} + C_\varepsilon \|\n^{1/2}u_{tt}\|^2_{L^2} + C_\varepsilon \left(1+\|\na u_{t}\|^4_{L^2} \right).\ea\ee
In addition, it follows from (\ref{pt1}), (\ref{c421}) and (\ref{c431}) that
\be\la{c466}\ba 
|I_3|&\le C\left(\|\n_{tt}\|_{L^2} \|u\|_{L^\infty}+\|\n_{t}\|_{L^4}\|u_{t}\|_{L^4} \right)\|\na u \|_{L^4} \|u_{tt}\|_{L^4} \\
& \le C \left( \|\n_{tt}\|_{L^2}+1+ \|\na u_t \|_{L^2}  \right) \left(\|\n^{1/2}u_{tt}\|_{L^2} + \| \na u_{tt}\|_{L^2} \right) \\
&\le \varepsilon \| \na u_{tt}\|^2_{L^2} + C_\varepsilon \|\n^{1/2}u_{tt}\|^2_{L^2} + C_\varepsilon \left(1+\|\na u_{t}\|^2_{L^2} \right) +C_\varepsilon \|\n_{tt}\|^2_{L^2} .\ea\ee
Similarly, with the aid of (\ref{pt1}), (\ref{c421}) and 
Cauchy's inequality, it holds that
\be  \la{c467}\ba |I_4|&\le C\|\n^{1/2} u_{tt}\|_{L^2} \|\na
u\|_{L^4} \| u_{tt}\|_{L^4} +C \|P_{tt}\|_{L^2}\|\na
u_{tt}\|_{L^2}\\
&\le \varepsilon \| \na u_{tt}\|^2_{L^2} + C_\varepsilon \|\n^{1/2}u_{tt}\|^2_{L^2} + C_\varepsilon \| P_{tt}\|^2_{L^2}. \ea\ee	
Substituting (\ref{c464})--(\ref{c467}) into (\ref{c463})
and taking $\varepsilon$ suitably small, we conclude that
\be\la{c468}\ba
& \frac{d}{dt}\int_{ }\n |u_{tt}|^2dx+\int_{ }\left(\mu|\na u_{tt}|^2+(\mu+\lambda)({\rm
	div}u_{tt})^2\right)dx \\
& \le C \|\n^{1/2}u_{tt}\|^2_{L^2}+C\left(\|\n_{tt}\|^2_{L^2}+\|P_{tt}\|^2_{L^2} \right) +C\left(1+\|\na u_{t}\|^4_{L^2} \right).
\ea \ee	
Multiplying (\ref{c468}) by $t^2$, and integrating the resulting equation over $(0,T)$ yields
\be\la{c469}\ba
\sup_{0\le t\le T} t^2 \int_{ }\n |u_{tt}|^2dx+\int_{0}^T t^2 \int_{ }|\nabla
u_{tt}|^2dxdt\le C, \ea \ee
due to (\ref{c431}) and (\ref{c442}).

Consequently, it follows from (\ref{c455}), (\ref{c4417}), (\ref{c442}),
(\ref{c451}) and (\ref{c469}) that	
\be\la{c4610}\ba	
\sup_{0\le t\le T} t \left( \|\na^3 u \|_{L^q} + \|\na^2 u_t \|_{L^2} \right ) \le C,
\ea\ee
which together with (\ref{c469}) and (\ref{c4610}) implies (\ref{c461}),
and hence we finish the proof of Lemma \ref{c26}.
\end{proof}

\section{Proofs of Theorem \ref{th0}-\ref{th3}}
In this section, we are devoted to proving the main results.
First, we establish the global existence of the classical solution to problem (\ref{ns})--(\ref{i3}). 
Then, by approximating the initial data and applying standard compactness arguments, we can prove the global existence of weak and strong solutions.

Proof of Theorem \ref{th2}.
Let $(\n_0,u_0)$  be the initial data in Theorem \ref{th2}, 
satisfying (\ref{csol1}) and (\ref{csol2}).	
By the local existence result(Lemma \ref{lct}), there exists a $T_*>0$ such that the problem (\ref{ns})--(\ref{i3}) 
has a unique classical solution $(\n,u)$ on $\T^2 \times (0,T_*]$. 
Subsequently, we use the a priori estimates, Lemma \ref{g5},
Lemma \ref{c25} and Lemma \ref{c26}, to extend the local classical solution$(\n,u)$ to all time.

Firstly, since $\n \in C \left([0,T_*];W^{2,q} \right)$
and $\n_0 \le \|\n_0\|_{L^\infty}$,
there exists a $T_1 \in (0,T_*] $ such that (\ref{pd51}) holds for $T=T_1$.

Next, we introduce the following notation:
\be \la{y522}\ba	
T^* :=\sup\{T\,|\,(\ref{pd51}) \  \text{holds} \}.
\ea \ee	
Obviously, $T^*\ge T_1>0$. 
Furthermore, for any $0<\tau<T\leq T^*$
with $T$ finite, we can derive from Lemma \ref{c22}, Lemma \ref{c25} and Lemma \ref{c26} that
\be\la{y523}\ba 
u \in C \left([\tau ,T];C^2(\T^2) \right) ,\quad
u_t \in C\left([\tau ,T];C(\T^2) \right),
\ea\ee 
where we have used the standard embedding
$$L^\infty(\tau ,T;W^{3,q})\cap H^1(\tau ,T;H^2)\hookrightarrow
C\left([\tau ,T];C^2(\T^2) \right),  $$
and	
$$L^\infty(\tau ,T;H^2)\cap H^1(\tau ,T;L^2)\hookrightarrow
C\left([\tau ,T]; C(\T^2) \right).  $$	
Moreover, it follows from $(\ref{ns})_1$, Lemma \ref{c23}, 
Lemma \ref{c25} and Lemma \ref{c26}
as well as the standard arguments in \cite{L1} that
\be \la{y524}\ba 
\n \in C \left([0,T];W^{2,q} \right).\ea \ee 	
By combining (\ref{y523}) with (\ref{y524}), we derive
\be \la{y525}\ba 
\n^{1/2}u_t\in C([\tau,T];L^2),\ea \ee
which together with (\ref{y523}), yields
\be \la{y526}\ba 
\n^{1/2} \dot u \in C([\tau,T];L^2).\ea \ee
Finally, we claim that
\be \la{y527}\ba T^*=\infty .\ea \ee
Assume, for the sake of contradiction, that  $T^*<\infty$. Then by Lemma \ref{g5}, (\ref{pd52}) holds for
$T=T^*$. 
It follows from Lemma \ref{c24}, Lemma \ref{c25}, Lemma \ref{c26}, 
and (\ref{y523}), (\ref{y524}), (\ref{y526}) that $(\n(x,T^*),u(x,T^*))$ satisfies (\ref{csol1}) and (\ref{csol2}), where $g_2(x)\triangleq \left( \n^{1/2} \dot u \right)(x, T^*),\,\,x\in \T^2.$ 
Thus, Lemma \ref{lct} implies that there exists some $T^{**}>T^*$, such that
(\ref{pd51}) holds for $T=T^{**}$, which contradicts (\ref{y522}),
and hence (\ref{y527}) holds. 
Finally, Lemma \ref{c24}, Lemma \ref{c25} and Lemma \ref{c26} show that $(\rho,u)$
is in fact the classical solution defined on $\T^2\times(0,T]$ for any
$0<T<T^*=\infty$.
Additionally, according to Lemma \ref{g3} we obtain that $(\n,u)$ satisfies (\ref{wsol4}).
And the uniqueness of $(\n,u)$ satisfying (\ref{csol4}) is similar to the proof of \cite{Ge}, and hence the proof of Theorem \ref{th2} is finished.	

Proof of Theorem \ref{th0}.
Let $(\n_0,u_0)$  be the initial data in Theorem \ref{th0}, satisfying (\ref{wsol1}).
For a constant $ \de\in (0,1),$ we define
\be\la{y529} \ba
\n_0^\de \triangleq k_\de*\n_0 , \quad u_0^\de\triangleq k_\de*u_0,
\ea \ee
where $k_\de$ is the standard mollifying kernel of width $\de$. 
Clearly, it holds that $\n_0^\de,u_0^\de \in C^\infty$, 
and for any $1 \le p < \infty$,
\bnn
\lim\limits_{\de\rightarrow 0}\left(\|\n_0^\de-\n_0\|_{L^p}
+\|u_0^\de-u_0\|_{H^1}\right)=0,
\enn
and additionally,
\bnn
\n_0^\de \rightharpoonup \n_0  \mbox{ weakly * in } L^\infty(\T^2).
\enn

According to the proof of Theorem \ref{th2}, we know that for the problem (\ref{ns})--(\ref{i3})
in which $(\n_0,m_0)$ is replaced by $(\n_0^\de, \n_0^\de u_0^\de )$,
this problem admits a unique global classical solution  $(\n^\de,u^\de)$  satisfying Lemma
\ref{s21} and Lemma \ref{s22}, and the constant $C$ is independent of $\de$.

Therefore, it follows from (\ref{s411}), (\ref{s418}) and (\ref{s421}) that
\be\la{ws1} \ba
\sup_{0\le t\le T} \left( \| \n^\de \|_{L^\infty} + \| u^\de \|_{H^1} + \si \| \sqrt{\n^\de} \dot u^\de \|^2_{L^2} \right) + \int_0^T t \| u_t^\de \|^2_2 dt \le C,
\ea\ee
where the constant $C$ is independent of $\de$. Combining this and Aubin-Lions Lemma, without loss of generality, we conclude that
\be\la{ws2}\ba
\begin{cases}
	\n^\de \rightharpoonup \n  \mbox{ weakly * in } L^\infty(\T^2 \times (0,T)),\\
	u^\de \rightharpoonup u  \mbox{ weakly * in } \ L^\infty(0,T;H^1),\\
	u^\de \to   u  \mbox{ strongly  in } \ C( [\tau,T];L^p ),
\end{cases}
\ea\ee 
for any $0<\tau<T<\infty$ and $1 \le p < \infty$.

Moreover, by using the compactness arguments in \cite{DM,F,L2}, we get
\be\la{ws3}\ba
\n^\de \to \n  \mbox{ strongly  in } L^p(\T^2 \times (0,T)),
\ea\ee 
for any $1 \le p < \infty$.

Since $(\n^\de,u^\de)$ satisfies (\ref{ns}), using (\ref{ws2}), (\ref{ws3}) and
letting $\de\rightarrow 0$,
we can show that $(\n,u)$ is a global weak solution of the problem (\ref{ns})--(\ref{i3}).
Moreover, we deduce from (\ref{pd301}), (\ref{pd302}) and (\ref{pd303}) 
that for any $1 \le s < \infty$ and $t \ge 1$,
\be\la{ws4}\ba
\| \n^\de-\ol{\n^\de}\|^{s}_{L^s} \le C e^{-2 \alpha_0 t},\quad
\| \o^\de \|^2_{L^2} +\nu \| \div u^\de \|^2_{L^2}  \le C e^{-\alpha_0 t},\quad  \| \sqrt{\n^\de} \dot u^\de \|^2_{L^2} \le C e^{-\alpha_0 t},
\ea\ee
where the constant $C$ is independent of $\de$. Then based on (\ref{ws1}), (\ref{ws2}) and (\ref{ws3}),
we obtain that $(\n,u)$ satisfies (\ref{wsol4}), and the proof of Theorem \ref{th0} is completed.

Proof of Theorem \ref{th1}.
Let $(\n_0,u_0)$  be the initial data in Theorem \ref{th1}, satisfying (\ref{ssol1}).
For a constant $ \de\in (0,1),$ we identically define
\be\la{y529} \ba
\n_0^\de \triangleq k_\de*\n_0, \quad u_0^\de\triangleq k_\de*u_0.
\ea \ee
Obviously, we have $\n_0^\de,u_0^\de \in C^\infty$, and 
\bnn
\lim\limits_{\de\rightarrow 0}\left(\|\n_0^\de-\n_0\|_{ W^{1,q}}+\|u_0^\de-u_0\|_{H^1}\right)=0.
\enn
Based on the proof of Theorem \ref{th2}, we can infer that the problem (\ref{ns})--(\ref{i3}), with $(\n_0,u_0)$ substituted by $(\n_0^\de, u_0^\de )$,
admits a unique global classical solution $(\n^\de,u^\de)$ that conforms to Lemmas
\ref{s21}, \ref{s22} and \ref{s23}, and the constant $C$ is independent of $\de$.
By letting $\de\rightarrow 0$
and using standard arguments (see \cite{HL,LZZ}), we can conclude that the problem  (\ref{ns})--(\ref{i3}) 
has a global strong solution $(\n,u)$ satisfying (\ref{ssol4}) and (\ref{wsol4}).
The proof of uniqueness of $(\n,u)$ satisfying (\ref{ssol4}) is similar to \cite{Ge}, and we finish the proof of Theorem \ref{th1}.

Proof of Theorem \ref{th3}.
For $x_0 \in \T^2$, 
we take into account the characteristic curve $X(s)$ which initially passes through $x_0$, where $X(s)$ satisfies
\be \la{pbu1} \ba
\begin{cases}
	\frac{d}{ds}X(s) = u(X(s),s),   \\
	X(0) = x_0.
\end{cases}
\ea\ee
Since $(\n,u)$ is a strong solution that satisfies (\ref{ssol4}), $X(s)$ is well-defined. 
Moreover, from $(\ref{ns})_1$, it holds that 
\be \la{pbu2} \ba
\n \left(X(t),t\right) =\n_0(x_0) \exp \left \{ -\int_0^{t} \div u\left(X(\tau),\tau \right) d\tau \right \} \quad \mbox{  for all }  t\ge 0.
\ea\ee
By the assumption that $\n_0(x_0) = 0$, 
it follows that $\n \left(X(t),t\right) \equiv 0 \mbox{  for all }  t\ge 0$.
This implies that
\bnn \|\rho (x,t)-\ol{\n_{0}} \|_{C\left( \T^2 \right)} \ge
\left|\n \left(X(t),t\right)-\ol{\n_{0}} \right|  \equiv \ol{\n_{0}}>0,\enn 
which together with (\ref{gn11}) gives 
\be \la{pbu3} \ba
\ol{\n_{0}} \le \|\rho (x,t)-\ol{\n_{0}} \|_{C\left( \T^2  \right)}
&\le C \| \nabla \rho  (x,t) \|^a_{L^r}
\|\rho  (x,t)-\ol{\n_{0}} \|^{1-a}_{L^2},
\ea \ee
with $a=r/(2(r-1))\in  (\frac{1}{2},1)$.
Combining (\ref{pbu3}) and (\ref{wsol4}) leads to (\ref{pbu0}), and hence we finish the proof of Theorem \ref{th3}.

Proof of Theorem \ref{th01}.
For any $\nu \ge \nu_1$, we deduce from (\ref{pt1}), (\ref{0x1}), (\ref{0x10}) and (\ref{pd51})
that $\{\n^{\nu}\}_\nu$ is bounded in $L^\infty(\T^2 \times (0,\infty))$, and for any $0<\tau<T<\infty$,
$\{u^{\nu}\}_\nu$ is bounded in $L^\infty(\tau,T;H^1)\cap L^2(0,T;H^1)$.
Moreover, by combining (\ref{pt1}) with (\ref{gw}) and applying H\"older's inequality, we derive
\be\la{ins01}\ba
\| u^{\nu}_t \|_{L^2}
& \le C \left( \| \dot{u^{\nu}} \|_{L^2} + \| u^{\nu} \cdot \na u^{\nu} \|_{L^2} \right) \\
& \le C \left( \| \sqrt{\n^{\nu}} \dot{u^{\nu}} \|_{L^2} + \| \na \dot{u^{\nu}} \|_{L^2}
+ \| u^{\nu} \|_{L^4} \| \na u^{\nu} \|_{L^4} \right) \\
& \le C \| \sqrt{\n^{\nu}} \dot{u^{\nu}} \|_{L^2} + C \| \na \dot{u^{\nu}} \|_{L^2}
+ C \| u^{\nu} \|_{L^4} \left( \| \n^\nu \dot{u^{\nu}} \|_{L^2} +\| P^\nu -\ol{P^\nu} \|_{L^4} \right),
\ea\ee
which together with (\ref{0x10}) and (\ref{pd11}) implies $\{u^{\nu}\}_\nu$ is bounded in $H^1(\tau,T;L^2)$.

Therefore, with a slight abuse of notation, 
there exists a subsequence $(\n^n,u^n)$ of $(\n^{\nu},u^{\nu})$ and
$\n \in L^\infty(\T^2 \times (0,\infty)), u \in L^\infty(\tau,T;H^1)\cap L^2(0,T;H^1)$ such that
\be\la{ins1}\ba
\begin{cases}
\n^n \rightharpoonup \n  \mbox{ weakly* in } L^\infty(\T^2 \times (0,\infty)),\\
u^n \rightharpoonup u  \mbox{ weakly* in } \ L^\infty(\tau,T;H^1)\cap L^2(0,T;H^1), \\
u^n \to u  \mbox{ strongly  in } \ L^\infty(\tau,T;L^p),
\end{cases}
\ea\ee 
for any $1 \le p < \infty$ and $0<T<\infty$.

Then, we set $G^n:=n\div u^n-(P^n-\ol{P^n})$ and $\o^n:= \na^\bot \cdot u^n$.
Based on (\ref{p2}), (\ref{wsol2}),
(\ref{0x10}) and Poincar\'e's inequality, 
we can conclude that $\{G^n\}_n$ and $\{\o^n\}_n$ are bounded in $L^2(\tau,\infty;H^1)$.
Hence, without loss of generality, we can assume that there exists $\pi \in L^2(\tau,\infty;H^1)$ such that
\be\la{ins3}\ba
G^n \rightharpoonup - \pi \quad  \mbox{ weakly in } L^2(\tau,\infty;H^1).
\ea\ee
Rewriting $(\ref{ns})_2$, we derive that $(\n^n,u^n)$ satisfies
\be\la{ins4}\ba
(\n^n u^n)_t+\div(\n^n u^n\otimes u^n) -\na G^n -\mu \na^{\bot} w^n =0.
\ea\ee
Passing to the limit as $n \to \infty$ implies that $(\n,u)$ satisfies
\be\la{ins6}\ba
\begin{cases}
\n_t+\div(\n u)=0,\\
(\n u)_t+\div(\n u\otimes u) -\mu \na^{\bot} w + \na \pi =0.
\end{cases} 
\ea\ee
On the other hand, based on (\ref{0x1}) and (\ref{0x10}), we conclude that for any $0<\tau<\infty$
\be\la{ins7}\ba
\div u^{\nu} = O(\nu^{-1/2}) \  in \  L^2(\T^2 \times (0,\infty)) \cap L^\infty(\tau,\infty;L^2),
\ea\ee
which yields (\ref{isol4}).
Then, from (\ref{ins1}) and (\ref{ins7}), we deduce that $\div u=0$.
Combining this with the equality $\Delta u = \na \div u + \na^{\bot} w $, leads to $\na^{\bot} w=\Delta u$.
Consequently, the limiting solution $(\n,u)$ satisfies (\ref{isol2}) and (\ref{isol3}).

Moreover, when $\nu \ge \nu_1$ and the initial data $(\n_0,u_0)$ satisfies (\ref{ws}),
the a priori estimates (\ref{pt1}), (\ref{0x1}), (\ref{0x10}), (\ref{pd11}) and (\ref{pd51})
yield that the sequence $\{\n^{\nu}\}_\nu$ is bounded in $L^\infty(\T^2 \times (0,\infty))$,
Furthermore, for any $0<T<\infty$, the sequence
$\{u^{\nu}\}_\nu$ is bounded in $L^\infty(0,T;H^1)$,
$\{\na u^{\nu}\}_\nu$ is bounded in $L^2(\T^2 \times (0,\infty)) \cap L^\infty(0,\infty;L^2)$,
and $\{\sqrt{\n^\nu} \du^{\nu}\}_\nu$ is bounded in $L^2(0,\infty;L^2)$.
Then, multiplying (\ref{pd118}) by $\si$, integrating it over $(0,T)$, \
and applying (\ref{pd120}) and (\ref{x1000}) implies that
$\{\sqrt{t} \sqrt{\n^\nu} \dot{u^{\nu}}\}_\nu$ is bounded in $L^\infty(0,T;L^2)$
and $\{\sqrt{t} \na \dot{u^{\nu}}\}_\nu$ is bounded in $L^2(\T^2 \times (0,T))$.
In addition, from (\ref{p2}) we obtain that the sequences
$\{\sqrt{t} \na G^{\nu}\}_\nu$ and $\{\sqrt{t} \na \o^{\nu}\}_\nu$ are bounded in $L^\infty(0,T;L^2) \cap L^2(0,T;L^p)$.
Therefore, by arguments similar to those above,
we conclude that there exists a subsequence of $(\n^{\nu},u^{\nu})$
converging to a global solution of (\ref{isol2}) satisfying (\ref{lws1}).
Then, according to the result in \cite{DM2}, 
the system (\ref{isol2}) with initial data $(\n_0,u_0)$ satisfying (\ref{ws}) has a unique global solution.
This implies that the whole sequence $(\n^{\nu},u^{\nu})$ converges to the global solution of (\ref{isol2}), and $(\n,u)$ satisfies (\ref{lws1}).

Finally, for initial data $(\n_0,u_0)$ that satisfy the regularity conditions (\ref{insc1}) and the compatibility condition (\ref{insc2}),
Corollary 1.4 in \cite{HW} guarantees that the system (\ref{isol2}) admits 
a unique global strong solution $(\n,u)$ satisfying (\ref{insc3}).
Since \cite{DM2} establishes the uniqueness of solutions to (\ref{isol2}) under the conditions (\ref{lws1}),
we conclude that the entire sequence $(\n^{\nu},u^{\nu})$ converges to the 
unique global strong solution of (\ref{isol2}) and $(\n,u)$ satisfies (\ref{insc3}).
Thereby, this completes the proof of Theorem \ref{th01}.

\begin {thebibliography} {99}
\bibitem{BKM}{\sc J.~T. Beale, T. Kato and A.~J. Majda}, 
{\em Remarks on the breakdown of smooth solutions for the $3$-D Euler equations}, 
Comm. Math. Phys. {\bf 94} (1984), no.~1, 61--66.



\bibitem{CL}{\sc G.~C. Cai and J. Li}, 
{\em Existence and exponential growth of global classical solutions to the compressible Navier-Stokes equations with slip boundary conditions in 3D bounded domains}, 
Indiana Univ. Math. J. {\bf 72} (2023), no.~6, 2491--2546.

\bibitem{CCK}{\sc Y. Cho, H.~J. Choe and H. Kim}, 
{\em Unique solvability of the initial boundary value problems for compressible viscous fluids}, 
J. Math. Pures Appl. (9) {\bf 83} (2004), no.~2, 243--275.

\bibitem{CK}{\sc Y. Cho and H. Kim}, 
{\em On classical solutions of the compressible Navier-Stokes equations with nonnegative initial densities}, 
Manuscripta Math. {\bf 120} (2006), no.~1, 91--129.

\bibitem{CK2}{\sc H.~J. Choe and H. Kim}, 
{\em Strong solutions of the Navier-Stokes equations for isentropic compressible fluids}, 
J. Differential Equations {\bf 190} (2003), no.~2, 504--523.



\bibitem{CLMS}{\sc R.~R. Coifman, Lions, P. L, Meyer, Y, Semmes, S.}, 
{\em Compensated compactness and Hardy spaces}, 
J. Math. Pures Appl. (9) {\bf 72} (1993), no.~3, 247--286.

\bibitem{DM2}{\sc R. Danchin and P.~B. Mucha}, 
{\em The incompressible Navier-Stokes equations in vacuum},
Comm. Pure Appl. Math. {\bf 72} (2019), no.~7, 1351--1385.

\bibitem{DM}{\sc R. Danchin and P.~B. Mucha}, 
{\em Compressible Navier-Stokes equations with ripped density},
 Comm. Pure Appl. Math. {\bf 76} (2023), no.~11, 3437--3492.




\bibitem{FC}{\sc C.~L. Fefferman}, 
{\em Characterizations of bounded mean oscillation}, 
Bull. Amer. Math. Soc. {\bf 77} (1971), 587--588.

\bibitem{F} {\sc E. Feireisl}, {\em
Dynamics of Viscous Compressible Fluids}, Oxford Lecture Series in Mathematics and
its Applications vol. 26, Oxford University Press, Oxford, 2004.

\bibitem{FNP}{\sc E. Feireisl, A. Novotn\'y{} and H. Petzeltov\'a}, 
{\em On the existence of globally defined weak solutions to the Navier-Stokes equations}, 
J. Math. Fluid Mech. {\bf 3} (2001), no.~4, 358--392.

\bibitem{FP}{\sc E. Feireisl and H. Petzeltov\'a}, 
{\em Large-time behaviour of solutions to the Navier-Stokes equations of compressible flow}, 
Arch. Ration. Mech. Anal. {\bf 150} (1999), no.~1, 77--96.


\bibitem{Ge}{\sc P. Germain}, 
{\em Weak-strong uniqueness for the isentropic compressible Navier-Stokes system},
 J. Math. Fluid Mech. {\bf 13} (2011), no.~1, 137--146.

 \bibitem{H4}{\sc D. Hoff}, 
 {\em Global existence for 1D, compressible, 
 isentropic Navier-Stokes equations with large initial data}, Trans. Amer. Math. Soc. {\bf 303} (1987), no.~1, 169--181.

\bibitem{H1}{\sc D. Hoff}, {\em Global solutions of the Navier-Stokes equations for multidimensional compressible flow with discontinuous initial data},
J. Differential Equations {\bf 120} (1995), no.~1, 215--254.

\bibitem{H2}{\sc D. Hoff}, {\em Strong convergence to global solutions for multidimensional flows of compressible, viscous fluids with polytropic equations of state and discontinuous initial data},
Arch. Rational Mech. Anal. {\bf 132} (1995), no.~1, 1--14.


\bibitem{H3}{\sc D. Hoff}, 
{\em Compressible flow in a half-space with Navier boundary conditions}, 
J. Math. Fluid Mech. {\bf 7} (2005), no.~3, 315--338.

\bibitem{HL2}{\sc X.-D. Huang and J. Li},
 {\em Existence and blowup behavior of global strong solutions to the two-dimensional barotrpic compressible Navier-Stokes system with vacuum and large initial data},
J. Math. Pures Appl. (9) {\bf 106} (2016), no.~1, 123--154.

\bibitem{HL}{\sc X.-D. Huang and J. Li}, 
{\em Global classical and weak solutions to the three-dimensional full compressible Navier-Stokes system with vacuum and large oscillations},
 Arch. Ration. Mech. Anal. {\bf 227} (2018), no.~3, 995--1059.

\bibitem{HW}{\sc X.-D. Huang and Y. Wang}, 
{\em Global strong solution to the 2D nonhomogeneous incompressible MHD system},
J. Differential Equations {\bf 254} (2013), no.~2, 511--527.

\bibitem{HLX3}{\sc X.-D. Huang, J. Li and Z. Xin}, 
 {\em Blowup criterion for viscous baratropic flows with vacuum states}, 
 Comm. Math. Phys. {\bf 301} (2011), no.~1, 23--35.

\bibitem{HLX1}{\sc X.-D. Huang, J. Li and Z. Xin}, 
{\em Serrin-type criterion for the three-dimensional viscous compressible flows},
SIAM J. Math. Anal. {\bf 43} (2011), no.~4, 1872--1886.

\bibitem{HLX2}{\sc X.-D. Huang, J. Li and Z. Xin}, 
{\em Global well-posedness of classical solutions with large oscillations and vacuum to the three-dimensional isentropic compressible Navier-Stokes equations}, 
Comm. Pure Appl. Math. {\bf 65} (2012), no.~4, 549--585.


\bibitem{K}{\sc T. Kato},
 {\em Remarks on the Euler and Navier-Stokes equations in ${\bf R}^2$}, 
Proc. Sympos. Pure Math., {\bf 45}, (1986),1--7.

\bibitem{KS}{\sc A.~V. Kazhikhov and V.~V. Shelukhin},
{\em Unique global solution with respect to time of
initial-boundary value problems for one-dimensional equations
of a viscous gas}
 Prikl. Mat. Meh. {\bf 41} (1977), no.~2J. Appl. Math. Mech. {\bf 41} (1977), no.~2.

\bibitem{LLL}{\sc J. Li, Z. Liang}, {\em On local classical solutions to the Cauchy problem of the two-dimensional barotropic compressible Navier-Stokes equations with vacuum},
J. Math. Pures Appl. (9) {\bf 102} (2014), no.~4, 640--671.

\bibitem{LX}{\sc J. Li and Z. Xin}, 
{\em Some uniform estimates and blowup behavior of global strong solutions to the Stokes approximation equations for two-dimensional compressible flows}, 
J. Differential Equations {\bf 221} (2006), no.~2, 275--308.

\bibitem{LX2}{\sc J. Li and Z. Xin}, 
{\em Global well-posedness and large time asymptotic behavior of classical solutions to the compressible Navier-Stokes equations with vacuum}, 
Ann. PDE {\bf 5} (2019), no.~1, Paper No. 7, 37 pp.

\bibitem{LZ}{\sc X. Liao and S.M. Zodji}, 
{\em Global-in-time well-posedness of the compressible Navier-Stokes equations with striated density}, 
arXiv:2405.11900.

\bibitem{LZZ}{\sc J. Li, J.~W. Zhang and J.~N. Zhao}, 
{\em On the global motion of viscous compressible barotropic flows subject to large external potential forces and vacuum}, 
SIAM J. Math. Anal. {\bf 47} (2015), no.~2, 1121--1153.

\bibitem{L1} {\sc P.L. Lions}, {\em Mathematical Topics in Fluid Mechanics. Vol. 1: Incompressible Models},
Oxford Lecture Series in Mathematics and its Applications, vol. 3, The Clarendon Press, Oxford University
Press, New York, 1996. Oxford Science Publications.

\bibitem{L2} {\sc P.L. Lions}, {\em Mathematical Topics in Fluid Mechanics. Vol. 2: Compressible Models},
Oxford Lecture Series in Mathematics and its Applications, vol. 10, The Clarendon Press, Oxford University
Press, New York, 1996. Oxford Science Publications.

\bibitem{MN1}{\sc A. Matsumura, T. Nishida}, {\em The initial value problem for the equations of motion
of viscous and heat-conductive gases},
J. Math. Kyoto Univ. {\bf 20}(1) (1980), 67--104.



\bibitem{N}{\sc J. Nash}, {\em Le probl\`{e}me de Cauchy pour les \'{e}quations diff\'{e}rentielles d'un fluide g\'{e}n\'{e}ral},
 Bull. Soc. Math. France {\bf 90} (1962), 487--497 (French).

\bibitem{NI}{\sc L. Nirenberg}, {\em On elliptic partial differential equations},
 Ann. Scuola Norm. Sup. Pisa Cl. Sci. (3) {\bf 13} (1959), 115--162.

\bibitem{NS}{\sc A. Novotn\'y{} and I. Stra\v skraba}, 
{\em Convergence to equilibria for compressible Navier-Stokes equations with large data}, 
Ann. Mat. Pura Appl. (4) {\bf 179} (2001), 263--287.

\bibitem{PSW}{\sc Y.~F. Peng, X. Shi and Y.~S. Wu}, 
{\em Exponential decay for Lions-Feireisl's weak solutions to the barotropic compressible Navier-Stokes equations in 3D bounded domains}, 
Indiana Univ. Math. J. {\bf 70} (2021), no.~5, 1813--1831.

\bibitem{SS}{\sc R. Salvi and I. Stra\v skraba}, 
{\em Global existence for viscous compressible fluids and their behavior as $t\to\infty$}, 
J. Fac. Sci. Univ. Tokyo Sect. IA Math. {\bf 40} (1993), no.~1, 17--51.

\bibitem{S1}{\sc D. Serre},
{\em Solutions faibles globales des \'equations de Navier-Stokes pour un fluide compressible}, 
C. R. Acad. Sci. Paris S\'er. I Math. {\bf 303} (1986), no.~13, 639--642.

\bibitem{S2}{\sc D. Serre},
{\em Sur l'\'equation monodimensionnelle d'un fluide visqueux, compressible et conducteur de chaleur,} 
C. R. Acad. Sci. Paris S\'er. I Math. {\bf 303} (1986), no.~14, 703--706.

\bibitem{S}{\sc J. Serrin}, 
{\em On the uniqueness of compressible fluid motions}, 
Arch. Rational Mech. Anal. {\bf 3} (1959), 271--288.

\bibitem{ZAA}{\sc A.~A. Zlotnik}, 
{\em Uniform estimates and the stabilization of symmetric solutions of
	a system of quasilinear equations}, 
Differ. Equ. {\bf 36} (2000), no.~5, 701--716.


\end {thebibliography}

\end{document}